\documentclass[11pt]{amsart}

\usepackage{amsmath,amssymb,latexsym,amsthm,newlfont,enumerate}
\usepackage{hyperref}

\usepackage[T1]{fontenc}
\usepackage[all]{xy}

%\usepackage{xypic}
%\xyoption{all}

%theorems and the like

\theoremstyle{plain}

\newtheorem{thm}{Theorem}[section]

\newtheorem{pro}[thm]{Proposition}
\newtheorem{prb}[thm]{Problem}
\newtheorem{lem}[thm]{Lemma}

\newtheorem{cor}[thm]{Corollary}
\newtheorem{con}[thm]{Conjecture}

\newtheorem{thmA}{Theorem}

\newtheorem{proA}[thmA]{Proposition}

\theoremstyle{definition}

\newtheorem{dfn}[thm]{Definition}

\newtheorem{nt}[thm]{Notation}

\newtheorem{rem}[thm]{Remark}

\newtheorem{exa}[thm]{Example}

\theoremstyle{remark}

%commands
\newcommand{\Z}{\mathbb{Z}}

\newcommand{\C}{\mathbb{C}}
\newcommand{\R}{\mathbb{R}}
\newcommand{\Q}{\mathbb{Q}}
\newcommand{\PS}{\mathbb{P}}

\newcommand{\OO}{\mathcal{O}}

\newcommand{\reg}{\mathrm{reg}}

%operators

\DeclareMathOperator{\rk}{rk}

\DeclareMathOperator{\codim}{codim}

\DeclareMathOperator{\Sing}{Sing}
\DeclareMathOperator{\Exc}{Exc}

%AG constructions

%BG constructions

%Homological algebra

%Divisors, cones etc

\DeclareMathOperator{\Pic}{Pic}

\DeclareMathOperator{\NEb}{\overline{\mathrm{NE}}}

\DeclareMathOperator{\NMb}{\overline{\mathrm{NM}}}

\DeclareMathOperator{\pr}{\mathrm{pr}}

%\hoffset -1in
%\oddsidemargin 3cm
%\evensidemargin 3cm
%\textwidth 15cm

%script letters

%\newcommand\sB{{\mathcal B}}

\newcommand\sF{{\mathcal F}}

\newcommand\sL{{\mathcal L}}

\newcommand\sO{{\mathcal O}}

\newcommand\sC{{\mathcal C}}

\title[Connectedness problems]{Maps from K-trivial varieties \\ and connectedness problems} 
%\date{\today}

%\keywords{MMP, rational curves, K\"ahler manifolds}

\author{Vladimir Lazi\'c}
\address{Fachrichtung Mathematik, Campus, Geb\"aude E2.4, Universit\"at des Saarlandes, 66123 Saarbr\"ucken, Germany}
\email{lazic@math.uni-sb.de}

\author{Thomas Peternell}
\address{Thomas Peternell, Mathematisches Institut, Universit\"at Bayreuth, 95440 Bayreuth, 
Germany}
\email{thomas.peternell@uni-bayreuth.de}

\thanks{
Lazi\'c was supported by the DFG-Emmy-Noether-Nachwuchsgruppe ``Gute Strukturen in der h\"oherdimensionalen birationalen Geometrie''. Peternell was supported by the DFG grant ``Zur Positivit\"at in der komplexen Geometrie''. We would like to thank F.-O.\ Schreyer for useful discussions.
\newline
\indent 2010 \emph{Mathematics Subject Classification}: 14J32, 14H10.\newline
\indent \emph{Keywords}: Calabi-Yau varieties, families of elliptic curves, elliptically (chain) connected varieties.
}

\begin{document}

\begin{abstract} 
In this paper we study varieties covered by rational or elliptic curves. First, we show that images of Calabi-Yau or irreducible symplectic varieties under rational maps are almost always rationally connected. Second, we investigate elliptically connected and elliptically chain connected varieties, and varieties swept out by a family of elliptic curves. Among other things, we show that Calabi-Yau or hyperk\"ahler manifolds which are covered by a family of elliptic curves contain uniruled divisors and that elliptically chain connected varieties of dimension at least $2$ contain a rational curve, and so do $K$-trivial varieties with finite fundamental group which are covered by elliptic curves.
\end{abstract}

\maketitle
\setcounter{tocdepth}{1}
\tableofcontents

\section{Introduction} 

There are two main themes in this paper. First, starting from a morphism (or merely a dominant rational map) from a Calabi-Yau manifold, it is a natural and important question to determine the geometry of the target variety. Indeed, F.\ Bogomolov raised this question at a conference in B\k{e}dlewo in June 2016, which was the starting point of this paper. Throughout the paper we work over $\C$.

The answer is given in the following result.

\begin{thmA} \label{thm:main1}
Let $X$ be a Calabi-Yau variety or an irreducible symplectic variety. Let $f\colon X \dasharrow Y$ be a dominant rational map to a projective manifold $Y$. Suppose that either $\dim Y < \dim X$ or that $\dim X = \dim Y$ and $f$ is ramified in codimension $1$. Then $Y$ is rationally connected. 
\end{thmA}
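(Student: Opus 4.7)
The plan is to assume $Y$ is not rationally connected and derive a contradiction via the MRC quotient of $Y$ combined with the canonical bundle formula. First I would choose a resolution $\pi\colon X'\to X$ so that $\phi = f\circ\pi\colon X'\to Y$ becomes a morphism. Since $K_X\sim 0$ we have $K_{X'}\sim E$ for some effective $\pi$-exceptional divisor $E$, and $H^0(X', mK_{X'})$ is one-dimensional, generated by the unique section $s_E^m$ vanishing on $mE$. In the equidimensional case where $f$ is ramified in codimension one, the ramification divisor $R$ of $\phi$ has a component not contained in $E$, so the identity $K_{X'} = \phi^*K_Y + R$ shows that $\phi^*$ maps $H^0(Y, mK_Y)$ into $H^0(X', mE - mR)$, which is zero since $s_E^m$ does not vanish along $R$. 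Hence $\kappa(Y) = -\infty$, so $Y$ is uniruled.

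The MRC fibration $\rho\colon Y\dashrightarrow W$ then has $\dim W > 0$ by the assumed non-rational-connectedness of $Y$, $W$ is non-uniruled by Graber--Harris--Starr, and $K_W$ is pseudo-effective by BDPP. After a further resolution I obtain a dominant morphism $\psi\colon X''\to W$ from a smooth projective $X''$ birational to $X$, with $\dim W<\dim X$ (automatic when $\dim Y<\dim X$, and a consequence of $Y$ being uniruled in the equidimensional case). The general fiber $F$ of $\psi$ has $K_F\sim 0$, so the Ambro--Fujino--Kawamata canonical bundle formula yields an effective $\Q$-divisor $B_W$ and a nef $\Q$-divisor $M_W$ on $W$ with $(W,B_W)$ klt and $K_W + B_W + M_W \sim_\Q 0$. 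Combined with pseudo-effectivity of $K_W$, this forces $B_W = 0$ and $M_W \equiv 0$, whence $K_W\sim_\Q 0$; thus $W$ is a positive-dimensional klt variety with numerically trivial canonical class.

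To close the argument I distinguish the two cases. In the irreducible symplectic case, Matsushita's theorem on fibrations of hyperk\"ahler manifolds asserts that the base of any nontrivial fibration from an IHS manifold is rationally connected, contradicting the non-uniruledness of $W$. In the Calabi--Yau case, the singular Beauville--Bogomolov decomposition yields a finite quasi-\'etale cover $\widetilde W\to W$ splitting as $\widetilde W = A \times \prod_i Y_i \times \prod_j Z_j$ into abelian, strict Calabi--Yau, and irreducible symplectic factors. Since $X$ is smooth and simply connected, the cover pulls back trivially over $X''$ and $\psi$ lifts to $\widetilde\psi\colon X''\to\widetilde W$. Projection to the abelian factor $A$ must be constant, forcing $A$ to be trivial; projection to an irreducible symplectic factor yields a nonzero pullback of its holomorphic $2$-form on $X$, contradicting $h^{2,0}(X)=0$ for strict Calabi--Yau of dimension $\geq 3$ (the case $\dim X=2$ being a K3 surface, covered by the symplectic case); and projection to a strict Calabi--Yau factor $Y_i$ pulls back its nowhere-vanishing canonical form to a nonzero class in $H^{\dim Y_i, 0}(X)=0$, again a contradiction. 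The main obstacle is this final step, which rests essentially on the singular Beauville--Bogomolov decomposition and a careful analysis of each type of factor in the splitting.
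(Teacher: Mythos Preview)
Your outline has several genuine gaps. First, ``$\kappa(Y)=-\infty$, so $Y$ is uniruled'' is the nonvanishing conjecture and is open; you should instead show that $K_Y$ is not pseudoeffective (your identity $\phi^*K_Y\sim E-R$ does this after pushing forward to $X$, since $\pi_*R\neq 0$), and then invoke BDPP. Second, and more seriously, the canonical bundle formula step does not go through as written. The claim $K_F\sim 0$ for a general fibre of $\psi\colon X''\to W$ is unjustified: you only know $K_F\sim E''|_F$ with $E''$ effective and $\pi''$-exceptional, and components of $E''$ may dominate $W$. Even granting $K_F\sim 0$, the formula $K_W+B_W+M_W\sim_\Q 0$ requires an lc-trivial fibration, i.e.\ $K_{X''}\sim_{\Q,\psi}0$, which fails once you resolve the indeterminacies; passing to a relative minimal model and concluding $K\sim_\Q 0$ on it would need abundance. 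Third, Matsushita's theorem applies to holomorphic fibrations from \emph{smooth} hyperk\"ahler manifolds, not to rational maps from the possibly singular irreducible symplectic varieties allowed here, and not to the morphism $\psi$ from the blown-up model $X''$ either, since $X''$ is no longer symplectic. Fourth, the quasi-\'etale cover $\widetilde W\to W$ need not pull back to an \'etale cover of $X''$ (its codimension-$2$ branch locus in $W$ can have codimension-$1$ preimage), so simple connectedness of $X$---itself not guaranteed in the singular setting---does not trivialise the pullback.

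The paper avoids all of these tools by a much shorter and more direct argument. After the same MRC reduction, it suffices to show $Y$ is uniruled when $d=\dim Y<\dim X$. Assuming $K_Y$ pseudoeffective, one pulls back $\Omega^d_Y$ to obtain a pseudoeffective rank-one reflexive subsheaf $\mathcal L\subseteq\Omega^{[d]}_X$. Semistability of $T_X$ with respect to any polarisation (valid for $K$-trivial varieties with canonical singularities) forces $c_1(\mathcal L)\cdot H^{n-1}\le 0$, hence $\mathcal L$ is numerically trivial and then torsion. In the Calabi--Yau case this immediately contradicts $H^0(\widetilde X,\Omega^{[d]}_{\widetilde X})=0$ for $0<d<n$ on a suitable quasi-\'etale cover; in the irreducible symplectic case a short further argument shows that the resulting reflexive $d$-form must be a multiple of $\sigma^{d/2}$ yet degenerate along the fibres of the map, a contradiction. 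No canonical bundle formula, no decomposition theorem for the base, and no Matsushita-type input is required.
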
 

For the notions of singular Calabi-Yau and singular irreducible symplectic varieties we refer to Section \ref{sec:prelim}. The proof of this result is in Section \ref{sec:ratconn}.

\medskip

Second, we study elliptically chain connected varieties. If $\mathcal C = (C_t)_{t\in T}$ is  family of elliptic curves covering a projective variety $X$, we say that $X$ is \emph{elliptically chain connected} with respect to $\mathcal C$ if every two very general points on $X$ can be joined by a finite connected chain of curves $C_t$. Similarly, the variety $X$ is \emph{elliptically connected} if there exists a family $(C_t)_{t\in T}$ of elliptic curves such that two general points of $X$ can be joined by one curve $C_t$. These notions are analogous to rational chain connectedness (for some family of rational curves) and rational connectedness, and have been first studied in detail by Gounelas \cite{Gou16}. If $X$ is smooth, rational connectedness and rational chain connectedness coincide by \cite[2.1]{KMM92}. However, in the elliptic setting, things change drastically: elliptic connectedness is much stronger than elliptic chain connectedness. For instance, we show:

\begin{thmA} 
Let $X$ be a Calabi-Yau threefold such that there is a finite map $f\colon X \to Y$ to a smooth cubic threefold $Y \subseteq \mathbb P^4$. Then $X$ is elliptically chain connected but not elliptically connected.
\end{thmA}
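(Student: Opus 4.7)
The plan is to prove the two assertions by essentially independent methods. Elliptic chain connectedness will be derived by pulling back the family of lines on $Y$, whereas the failure of elliptic connectedness rests on a dimension count that uses only that $X$ is a Calabi-Yau of dimension three.

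For elliptic chain connectedness, the first step is to analyse preimages of lines on $Y$. The smooth cubic threefold $Y\subseteq\PS^4$ is Fano of index $2$ with $K_Y=-\OO_Y(2)$, and since $K_X=\OO_X$, the formula $K_X=f^*K_Y+R$ for the ramification divisor gives $R=2f^*H$, where $H$ denotes the hyperplane class on $Y$. For a general line $\ell\subseteq Y$ one checks that $f^{-1}(\ell)$ is a smooth curve whose components meet $R$ transversely, and for any irreducible component $C_i\subset f^{-1}(\ell)$ of degree $d_i$ over $\ell$ the projection formula gives
\[
R\cdot C_i=2f^*H\cdot C_i=2H\cdot f_*C_i=2d_i,
\]
so Riemann-Hurwitz yields $2g(C_i)-2=-2d_i+2d_i=0$, i.e.\ $C_i$ is elliptic. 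These components sweep out a family $\mathcal C$ of elliptic curves on $X$ parametrised (generically) by an irreducible cover of the Fano surface of lines on $Y$, and this family covers $X$ since lines cover $Y$. I would then invoke the classical fact that any two very general points of a smooth cubic threefold can be joined by a chain of lines of bounded length. For very general $x_1,x_2\in X$, pick such a chain $\ell_1,\dots,\ell_k$ in $Y$ connecting $f(x_1)$ and $f(x_2)$; then $x_1\in f^{-1}(\ell_1)$, $x_2\in f^{-1}(\ell_k)$, and
\[
f^{-1}(\ell_j)\cap f^{-1}(\ell_{j+1})\supseteq f^{-1}(\ell_j\cap\ell_{j+1})\neq\emptyset,
\]
so the irreducible components of these preimages string together into a chain of curves in $\mathcal C$ connecting $x_1$ to $x_2$.

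To show that $X$ is not elliptically connected, let $\mathcal E=(C_t)_{t\in T}$ be any irreducible family of elliptic curves covering $X$, and let $C=C_{t_0}$ be a general smooth member. Since $K_X$ and $K_C$ are both trivial, adjunction gives
\[
\det N_{C/X}=K_C\otimes(K_X|_C)^{-1}=\OO_C,
\]
so $N_{C/X}$ is a rank-two vector bundle of degree zero on an elliptic curve. By Atiyah's classification of vector bundles on elliptic curves, $h^0(C,N_{C/X})\le 2$, and therefore $\dim T\le 2$; the covering hypothesis forces $\dim T=2$. The total space $\mathcal C_T$ of the family then has dimension three, so the incidence variety $\mathcal C_T\times_T\mathcal C_T$ has dimension $4$, and its image in $X\times X$ has dimension at most $4<6=\dim(X\times X)$. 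Hence two very general points of $X$ never lie on a common member of $\mathcal E$, so $X$ is not elliptically connected.

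The main obstacle in this plan is to check that the preimages of general lines really do assemble into a family of elliptic curves on $X$ amenable to chaining. The cleanest scenario is when $f^{-1}(\ell)$ is a smooth irreducible elliptic curve for general $\ell$, which should follow from a Bertini and connectedness argument exploiting the irreducibility of the Fano surface of lines on $Y$ together with the connectedness of $X$. If $f^{-1}(\ell)$ is reducible for general $\ell$, the irreducible components of the preimages over consecutive lines in the chain must be matched up, which can be done by exploiting the monodromy of the cover as $\ell$ varies in the Fano surface.
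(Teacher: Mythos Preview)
Your two halves take different routes from the paper, with different levels of completeness.

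For non-connectedness, the paper simply invokes Gounelas's theorem \cite[Theorem 6.2]{Gou16}: an elliptically connected manifold is either rationally connected or admits a rationally connected fibration over an elliptic curve, and a non-uniruled Calabi--Yau threefold is neither. Your dimension count is a pleasant elementary substitute, but the assertion ``by Atiyah, $h^0(C,N_{C/X})\le 2$'' is false as written: a rank-two degree-zero bundle on an elliptic curve can be unstable, sitting in an exact sequence $0\to L\to N\to L^{-1}\to 0$ with $\deg L=d>0$, and then $h^0(N)=d$. What saves you is that $C$ moves in a \emph{covering} family, so $N_{C/X}$ is generically globally generated; this forces every quotient line bundle to have non-negative degree, hence $N_{C/X}$ is semistable of degree zero and then indeed $h^0\le 2$. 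With that addition your argument is complete.

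For chain connectedness there is a more substantial gap. Lifting a chain of lines only produces a chain in a \emph{single} irreducible family $\mathcal C$ once you know $f^{-1}(\ell)$ is irreducible for general $\ell$, and your proposed justification does not work: a line on the cubic is not a complete intersection of ample divisors in $Y$ (two hyperplanes through $\ell$ cut out $\ell$ plus a residual conic), so Bertini/Lefschetz connectedness does not apply, and the monodromy fallback is only a gesture. The paper bypasses this entirely by arguing with the $\mathcal C$-quotient $g\colon X\dasharrow Z$. When $\rho(X)=1$ (as in the paper's explicit examples) any covering family is automatically chain connecting, since a nontrivial almost holomorphic map would produce a non-ample effective divisor. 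Without that hypothesis (Theorem~\ref{thm:map2}), one rules out $\dim Z=2$ because $Z$ would be a rational surface (by the paper's Theorem~A) dominating the Fano surface of lines on the cubic, which is irrational; and $\dim Z=1$ because the general fibre would be a K3 surface chain connected by smooth elliptic curves $C_t$ with $C_t^2=0$, impossible since $|C_t|$ then defines an elliptic fibration.
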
 

This result follows from Theorem \ref{thm:map} and Example \ref{ex}. The statement that $X$ is not elliptically connected stems from \cite[Theorem 6.2]{Gou16}, stating that otherwise $X$ would be rationally connected or would admit a rationally connected fibration to an elliptic curve. There are actually many more examples of elliptically chain connected but not elliptically connected varieties;
in principle, any non-uniruled manifold which is elliptically chain connected provides an example. 

\medskip

We next study the structure of elliptically chain connected varieties which are not uniruled. The starting point is the following result, proved in Section \ref{sec:ellchainconn}.

\begin{proA} 
Let $X$ be a normal projective $\mathbb Q$-Gorenstein variety and $\mathcal C = (C_t)_{t\in T} $ be a covering family of elliptic curves such that $X$ is smooth along the curve $C_t$ for general $t\in T$. Then:
\begin{enumerate} 
\item[(i)] $K_X \cdot C_t \leq 0$ for all $t$;
\item[(ii)] if $K_X$ is pseudoeffective, then the normalisation morphism of a general curve $C_t$ is \'etale and $K_X \cdot C_t = 0$ for all $t$;
\item[(iii)] if $K_X$ is pseudoeffective and if for a general $t\in T$ the curve $C_t$ is smooth, then its normal bundle $N_{C_t/X}$ is trivial.
\end{enumerate} 
\end{proA}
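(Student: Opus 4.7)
The central idea is that a covering family of curves forces an appropriate normal sheaf to be generically globally generated. Fix a general $t \in T$ and let $\nu \colon \widetilde{C}_t \to C_t \subset X$ be the normalisation; then $\widetilde{C}_t$ is a smooth curve of genus $g \in \{0, 1\}$ since $p_a(C_t) = 1$. Let $\mathcal N_\nu := \operatorname{coker}\bigl(d\nu \colon T_{\widetilde{C}_t} \to \nu^* T_X\bigr)$, let $\mathcal T \subset \mathcal N_\nu$ be its torsion subsheaf, and let $N := \mathcal N_\nu / \mathcal T$ be its locally free quotient of rank $n-1$. Write $p \colon \mathcal U \to T$ and $e \colon \mathcal U \to X$ for the universal family and its evaluation. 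Since $\mathcal C$ covers $X$, the map $e$ is smooth at a general point $u = (t, q) \in \mathcal U$, so $de \colon T_\mathcal U|_u \twoheadrightarrow T_X|_{e(u)}$. Combining this with the exact sequence $0 \to T_{\widetilde C_t}|_q \to T_\mathcal U|_u \to T_T|_t \to 0$ and quotienting by $d\nu(T_{\widetilde C_t}|_q) \subset T_X|_{e(u)}$ shows that the composition of the Kodaira--Spencer map $\kappa \colon T_T|_t \to H^0(\widetilde C_t, \mathcal N_\nu)$ with evaluation at $q$ is surjective. The image $W := \kappa(T_T|_t) \subset H^0(\widetilde{C}_t, \mathcal N_\nu)$ therefore satisfies $\dim W \geq n-1$ and generates $\mathcal N_\nu$ at a general point; hence $N$ is generically globally generated, so $N$ is nef and $\deg N \geq 0$.

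Combined with the identity
\[
-K_X \cdot C_t = \deg \nu^* T_X = \deg T_{\widetilde C_t} + \deg \mathcal N_\nu = (2-2g) + \operatorname{length}(\mathcal T) + \deg N,
\]
in which every summand on the right is nonnegative, this gives $K_X \cdot C_t \leq 0$, and by numerical constancy along the connected family $T$ the inequality holds for every $t$, proving (i). Moreover, if $\widetilde C_t$ happens to be rational, the right-hand side is at least $2$, so in that case $K_X \cdot C_t \leq -2$. For (ii), pseudoeffectivity of $K_X$ yields $K_X \cdot C_t \geq 0$ for very general $t$ (approximate $K_X$ by effective $\mathbb R$-divisors whose supports do not contain such $C_t$), and together with (i) this forces $K_X \cdot C_t = 0$ for every $t$. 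Feeding this back into the displayed equation collapses it to $g = 1$, $\mathcal T = 0$ and $\deg N = 0$ for general $t$; since then $p_a(C_t) = g(\widetilde C_t) = 1$, the $\delta$-invariant of $C_t$ vanishes, so $C_t$ is smooth and $\nu$ is an isomorphism, in particular étale.

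For (iii), $N = N_{C_t/X}$ is a nef bundle of rank $n-1$ and degree $0$ on the elliptic curve $C_t$. Pick any $(n-1)$-dimensional subspace $V \subset W$ whose evaluation is an isomorphism $V \xrightarrow{\sim} N|_{q_0}$ at some general point $q_0$; this is possible because $W$ evaluates onto general fibres of $N$. The resulting bundle map $\varphi \colon V \otimes \mathcal O_{C_t} \to N$ between rank-$(n-1)$ sheaves of degree $0$ is generically an isomorphism, so $\ker \varphi$ is a torsion subsheaf of the torsion-free sheaf $V \otimes \mathcal O_{C_t}$ and vanishes, while $\operatorname{coker}\varphi$ is a torsion sheaf of length $\deg N - 0 = 0$ and vanishes too. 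Therefore $\varphi$ is an isomorphism and $N_{C_t/X} \cong \mathcal O_{C_t}^{\,n-1}$ is trivial.

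I expect the main technical obstacle to be the deformation-theoretic bookkeeping in the first paragraph, namely extracting both the generic generation of $\mathcal N_\nu$ and the lower bound $\dim W \geq n-1$ from the single surjectivity of $de$; everything afterwards reduces to degree and rank manipulations on the smooth curve $\widetilde{C}_t$.
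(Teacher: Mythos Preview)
Your proof is correct and takes a genuinely different route from the paper. The paper works on the total space of the family: after reducing to $\dim T = \dim X - 1$ and normalising the graph to get $\widetilde p\colon\widetilde{\mathcal C}\to X$ and $\widetilde q\colon\widetilde{\mathcal C}\to\widetilde T$, it uses the ramification formula $K_{\widetilde{\mathcal C}} \sim \widetilde p^* K_X + E$ (with $E$ effective) together with adjunction $K_{\widetilde{\mathcal C}} \cdot \widetilde C_t = 0$ to obtain (i), and then for (ii) it restricts the cotangent sequence for $\widetilde p$ to $\widetilde C_t$ and compares determinants to force $\Omega^1_{\widetilde{\mathcal C}/X}|_{\widetilde C_t}=0$. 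You instead stay on the single curve and work with the normal sheaf $\mathcal N_\nu$ of $\nu\colon\widetilde C_t\to X$, extracting everything from the degree identity $-K_X\cdot C_t=(2-2g)+\operatorname{length}(\mathcal T)+\deg N$ once generic global generation of $N$ is in hand. Your argument is more self-contained on the curve and makes the role of each term transparent, at the cost of the deformation-theoretic bookkeeping you flag; the paper's approach avoids that bookkeeping by packaging the whole family into $\widetilde{\mathcal C}$, and as a bonus yields the slightly stronger statement that $\widetilde p$ is \'etale in a full neighbourhood of $\widetilde C_t$. Part (iii) is essentially the same in both: generically globally generated plus degree zero forces triviality.

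One terminological caveat: you read ``family of elliptic curves'' as $p_a(C_t)=1$, which lets you upgrade (ii) to smoothness of $C_t$ via the $\delta$-invariant. The paper's convention is rather that the \emph{normalisation} of the general member is elliptic (so $g=1$ is given but $p_a$ may be larger). Under that reading your degree formula still collapses to $\mathcal T=0$ and $\deg N=0$, i.e.\ $\nu$ is an immersion/unramified, which is precisely what the paper means by ``the normalisation is \'etale''; so (ii) stands, only your extra smoothness claim would need the $p_a=1$ hypothesis.
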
 

When $X$ has canonical singularities, then assertion (i) and the second part of assertion (ii) hold without assuming that $X$ is smooth near the general curve $C_t$, see Corollary \ref{cor:can}.

It is natural to expect that elliptically chain connected manifolds $X$ should have non-positive Kodaira dimension, and we prove this provided $X$ has a minimal model reached only by a sequence of divisorial contractions, without any flips. 
However, the general case remains mysterious, even assuming the Minimal Model Program, see Remarks \ref{rem1} and \ref{rem3}. 

\medskip

Finally, in Section \ref{sec:elltorat} we study the problem of existence of rational curves, provided the variety is already covered by elliptic curves; see also \cite{DFM16} for the case of elliptic and abelian fibrations on Calabi-Yau manifolds. We first show in Theorem \ref{thm:CYHK}:

\begin{thmA} 
Let $X$ be a Calabi-Yau or hyperk\"ahler manifold of dimension $\dim X \geq 2$.  If $X$ has a covering family of elliptic curves, then $X$ contains a uniruled divisor. In particular, $X$ contains a rational curve.  
\end{thmA}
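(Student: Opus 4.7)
The plan is to exploit Proposition 1.3 to pin down the geometry of the covering family and then compactify in the Hilbert scheme, producing rational components in the boundary that sweep out the desired uniruled divisor.

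\emph{Step 1.} Since $X$ is smooth and $K_X$ is trivial, Proposition 1.3 applies: for a general $t \in T$ the curve $C_t$ is smooth elliptic, $K_X \cdot C_t = 0$, and the normal bundle is trivial, $N_{C_t/X} \cong \mathcal{O}_{C_t}^{\oplus(n-1)}$ with $n := \dim X \geq 2$. In particular $h^0(N_{C_t/X}) = n-1$ and deformations of $C_t$ are unobstructed, so after replacing $T$ by the irreducible component of the Hilbert scheme of $X$ containing $[C_t]$ we may assume $\dim T = n-1$ and that the evaluation map $q \colon \mathcal{C} \to X$ is surjective and generically finite. Since $H \cdot C_t$ is constant for any ample $H$ on $X$, the family is bounded; let $\bar T$ denote its projective closure in the Hilbert scheme and $\bar q \colon \bar{\mathcal{C}} \to X$ the extended morphism.

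\emph{Step 2.} A flat limit of smooth elliptic curves has arithmetic genus one, and any such limit which is not itself a smooth elliptic curve -- either a singular curve of geometric genus zero, or a reducible curve -- necessarily contains a rational irreducible component. To produce the uniruled divisor, I would take a general one-parameter family $B \subset \bar T$ through a fixed $[C_{t_0}]$, obtained by intersecting $\bar T$ with a general complete intersection of hyperplane sections. Whenever $B$ meets the boundary $\partial T := \bar T \setminus T$, the corresponding surface $S \subset X$ swept out by the restricted family, suitably resolved, is an elliptic surface over $B$, and Kodaira's classification shows that each singular fibre contains rational components, yielding rational curves in $S$. Varying the pencil $B$ through $[C_{t_0}]$ within the $(n-2)$-dimensional family of such pencils, and using the generic finiteness of $\bar q$ to prevent collapse of the $(n-1)$-dimensional locus of rational components in $\bar{\mathcal{C}}$, one obtains an $(n-1)$-dimensional uniruled divisor $D \subset X$. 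The final ``In particular'' statement follows immediately, since any uniruled divisor is by definition covered by rational curves.

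\emph{Main obstacle.} The delicate case is when $\partial T$ is either empty or of codimension $\geq 2$ in $\bar T$, so that general pencils miss it. Then by properness of $\bar T$ the moduli map $T \to \mathbb{A}^1_j$ is constant, the family is isotrivial with a fixed fibre $E$, and after a finite \'etale base change $\tilde T \to T$ the family trivialises, producing a generically finite morphism $\tilde T \times E \to X$. The pullback of the non-zero holomorphic $1$-form on $E$ then gives a section of $\Omega^1$ on $\tilde T \times E$ which cannot descend from $X$, because $H^0(X, \Omega^1_X) = 0$ for $X$ Calabi-Yau or irreducible symplectic. Combined with simply connectedness of $X$, the Beauville-Bogomolov decomposition theorem applied to a suitable \'etale cover, and, in the hyperk\"ahler setting of dimension $\geq 4$, Matsushita's theorem on fibre dimensions of non-trivial holomorphic fibrations, this configuration is expected to be incompatible with the Calabi-Yau or hyperk\"ahler structure. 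Ruling out this isotrivial case uniformly in both settings is where I expect the argument to be the most subtle.
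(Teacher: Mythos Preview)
Your Step 2 correctly isolates the easy half: if the family has moduli, the $j$-invariant is non-constant on $T$ and the locus over $j=\infty$ produces a uniruled divisor in $X$. This is exactly what the paper does as well (Proposition~\ref{new1}(1)). So the entire content of the theorem lies in the isotrivial case, which you flag as the ``main obstacle'' but do not actually resolve.

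Your sketch of the isotrivial case has genuine problems. First, an isotrivial elliptic fibration does \emph{not} in general become a product after a finite \emph{\'etale} base change of $T$; one typically needs a ramified base change to kill the monodromy, and there may also be multiple fibres. Second, even granting a generically finite map $g\colon \tilde T\times E\to X$, the existence of a holomorphic $1$-form on the source is no contradiction: $g$ is not \'etale, so forms on $\tilde T\times E$ have no reason to descend to $X$, and the vanishing $H^0(X,\Omega^1_X)=0$ says nothing about $H^0(\tilde T\times E,\Omega^1)$. Third, Matsushita's theorem concerns fibrations \emph{from} a hyperk\"ahler manifold, not generically finite maps \emph{to} one, so it does not apply here. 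Beauville--Bogomolov likewise gives no leverage, since $X$ is already assumed irreducible in the decomposition sense.

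The paper's route through the isotrivial case is substantially different and is where all the work lies. Via weak semistable reduction \`a la Viehweg one shows that the relative canonical bundle of a suitable model of the family is torsion, and from this one extracts a finite surjective morphism $f\colon Y\to X$ with $H^0(Y,f^*T_X)\neq 0$ (Proposition~\ref{new1}(3)). Writing $f_*\mathcal O_Y=\mathcal O_X\oplus\mathcal F$, this yields a non-zero map $\mathcal F^*\to T_X$; a result of Lazarsfeld makes $\mathcal F^*$ nef on general curves in a surface section $S$, and $H$-stability of $T_X$ then forces $T_X|_S$ to be numerically flat. Hence $c_2(X)\cdot H^{n-2}=0$, so $c_2(X)=0$, and Yau's theorem gives that $X$ is an \'etale quotient of a torus, contradicting the Calabi--Yau or hyperk\"ahler hypothesis. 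None of these ingredients --- the section of $f^*T_X$, Lazarsfeld's nefness, or the $c_2$ argument --- appears in your sketch, and I do not see how to replace them by the tools you propose.
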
 

If $X$ is moreover elliptically chain connected, we obtain in Theorem \ref{thm:ellipticrational}, without any further assumption on $X$:

\begin{thmA} 
Let $X$ be an elliptically chain connected projective manifold with $\dim X \geq 2$. Then $X$ contains a rational curve. 
\end{thmA}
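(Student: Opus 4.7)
The plan is to argue by contradiction. Suppose $X$ contains no rational curve; I combine the structural results of the paper with the Beauville--Bogomolov decomposition to derive a contradiction with the hypothesis of elliptic chain connectedness.

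First, I reduce to the case $K_X\equiv 0$. Since $X$ admits no rational curve, $X$ is not uniruled, and hence $K_X$ is pseudoeffective by the theorem of Boucksom--Demailly--P\u{a}un--Peternell. Moreover, by Mori's cone theorem every $K_X$-negative extremal ray of $\NEb(X)$ is spanned by a rational curve, so the absence of rational curves forces $K_X$ to be nef. Applying the preceding Proposition to the covering family $\mathcal{C}=(C_t)_{t\in T}$ gives $K_X\cdot C_t=0$ for every $t\in T$. Let $f\colon X\dashrightarrow Z$ be the nef reduction of $K_X$. Every curve along which $K_X$ is numerically trivial is contracted by $f$, so each general $C_t$ lies in a fiber; since two very general points of $X$ are joined by a chain of such curves, $f$ is constant on a Zariski-dense subset and $\dim Z=0$. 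As nef dimension zero of a nef divisor forces numerical triviality, one concludes $K_X\equiv 0$.

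With $K_X\equiv 0$ at hand, the Beauville--Bogomolov decomposition provides a finite \'etale cover $\pi\colon\tilde X\to X$ and an isomorphism
\[
\tilde X\simeq A\times\prod_j V_j\times\prod_k H_k,
\]
where $A$ is an abelian variety, each $V_j$ is Calabi--Yau, and each $H_k$ is hyperk\"ahler. The pulled-back family $\tilde{\mathcal{C}}:=\pi^{-1}(\mathcal{C})$ still covers $\tilde X$ by elliptic curves, and since $\pi$ is \'etale any chain on $X$ lifts to a chain on $\tilde X$; hence $\tilde X$ is elliptically chain connected with respect to $\tilde{\mathcal{C}}$. Suppose first that some factor $V$ among the $V_j$ or $H_k$ is non-trivial (so $\dim V\geq 2$), and let $p\colon\tilde X\to V$ be the corresponding projection. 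Chain connectedness of $\tilde X$ forces some curves of $\tilde{\mathcal{C}}$ to have non-constant image under $p$---otherwise every chain would stay in a single fiber of $p$ and could not connect points with distinct images in $V$. Their images sweep out $V$ as a covering family of elliptic curves, so the preceding Theorem on Calabi--Yau and hyperk\"ahler manifolds covered by elliptic curves produces a rational curve $R\subset V$. Pulling $R$ back through $p$ gives a rational curve on $\tilde X$, whose image under the finite map $\pi$ is a rational curve on $X$, a contradiction.

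It remains to exclude the case $\tilde X=A$ abelian, which I expect to be the main obstacle since the preceding Theorem on Calabi--Yau and hyperk\"ahler manifolds does not apply. Every elliptic curve in $A$ is a translate of an elliptic subgroup, so the members of an irreducible algebraic family of elliptic curves on $A$ are pairwise disjoint translates of one fixed subgroup and thus cannot form a chain of length two. The pullback family $\tilde{\mathcal{C}}$ may a priori split into several irreducible components (permuted by the deck transformations of $\pi$), but the deck group preserves the numerical class of curves in a given algebraic family, so all components of $\tilde{\mathcal{C}}$ share a single numerical class. This rigidity contradicts the elliptic chain connectedness of $\tilde X=A$ established above, completing the argument.
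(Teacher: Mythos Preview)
Your overall strategy matches the paper's: reduce to $K_X\equiv 0$, pass to the Beauville--Bogomolov cover $\pi\colon\tilde X\to X$, and analyse the factors. Your nef-reduction argument for $K_X\equiv 0$ is equivalent to the paper's appeal to Theorem~\ref{thm:ecc1a} (here $K_X$ is already nef, so $X$ is its own minimal model and that theorem applies trivially), and your treatment of a Calabi--Yau or hyperk\"ahler factor via Theorem~\ref{thm:CYHK} parallels the paper's use of Lemma~\ref{lem:conn}.

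The genuine gap is in the purely abelian case $\tilde X=A$. Your key assertion---that all irreducible components of the pullback family $\tilde{\mathcal C}$ share a single numerical class because the deck group permutes them---does not follow: the deck group $G$ can act nontrivially on $H_2(A,\Z)$, so $G$-conjugate components may well carry distinct classes. Concretely, let $\tilde X=E\times E$, fix $t\in E[2]\setminus\{0\}$, and let $G=\langle\sigma\rangle\simeq\Z/4\Z$ act freely by $\sigma(x,y)=(y+t,x)$. Then $X=\tilde X/G$ is a smooth bielliptic surface and hence contains no rational curve; yet the images in $X$ of the fibres $E\times\{p\}$ form an irreducible one-parameter covering family of nodal curves with elliptic normalisation and self-intersection $2$, so any two members meet and $X$ is elliptically chain connected. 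On $\tilde X$ the pullback of this family splits into the horizontal and the vertical rulings, of classes $(1,0)$ and $(0,1)$, and it is precisely this $G$-mixing of two distinct classes that makes the quotient family connecting. Thus elliptic chain connectedness need not lift along finite \'etale covers, your sentence ``hence $\tilde X$ is elliptically chain connected'' is unjustified, and the numerical rigidity you invoke is unavailable. The paper's own proof passes over the same point with the phrase ``we may assume that $X$ itself decomposes''.
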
 

In Theorem \ref{thm:ratcurves} we obtain a precise structure theorem in the absence of rational curves.

\begin{thmA}  
Let $X$ be a projective manifold of dimension at least $2$ with a covering family $\sC$ of elliptic curves. Suppose that $X$ does not contain rational curves. Then there exists a fibration $\varphi\colon X \to W$ to a normal projective variety $W$ with the following properties:
\begin{enumerate} 
\item[(i)] $\varphi$ contract all elements of $\mathcal C$; more precisely, $\varphi$ is the $\mathcal C$-quotient of $X$;
\item[(ii)] all fibres of $\varphi$ are irreducible;
\item[(iii)] the normalisation of any fibre of $\varphi$ is an elliptic curve;
\item[(iv)] $\varphi$ is an almost smooth elliptic fibration over the smooth locus of $W$;
\item[(v)] $W$ has klt singularities.
\end{enumerate} 
\end{thmA}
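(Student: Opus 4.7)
The strategy is to construct the $\mathcal{C}$-quotient as a rational map, show its fibres are one-dimensional by invoking the previous result that an elliptically chain connected manifold of dimension at least $2$ contains a rational curve, and then upgrade everything to a morphism with smoothly elliptic fibres and klt base. Since $X$ contains no rational curves, $X$ is not uniruled and $K_X$ is pseudoeffective by the BDPP theorem. The previous Proposition then yields $K_X \cdot C_t = 0$, that a general $C_t$ is a smooth elliptic curve, and that $N_{C_t/X}$ is trivial. Using Campana's construction of the quotient by a covering family of curves, I would obtain a dominant rational map $\varphi\colon X \dashto W$ to a normal projective variety $W$ that contracts every $C_t$ and whose general fibre is the closure of the $\mathcal{C}$-equivalence class of a very general point.

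The first substantive step is to show that general fibres of $\varphi$ are one-dimensional. Let $F$ be a general fibre: by the equivalence-class description, $F$ is elliptically chain connected with respect to $\{C_t \subset F\}$. If $\dim F \geq 2$, then generic smoothness lets me assume $F$ smooth, and then the previous theorem on elliptically chain connected manifolds produces a rational curve in $F \subset X$, contradicting the hypothesis. Hence $\dim F = 1$, and since $F$ contains some $C_t$, a general fibre is a smooth elliptic curve. I would then extend $\varphi$ to a morphism using triviality of $N_{C_t/X}$ combined with the one-dimensionality of fibres: the tangent directions along the $C_t$ generate a saturated rank-one subsheaf $\sF \subset T_X$ whose algebraic leaves are precisely the fibres, and any indeterminacy of $\varphi$ would, after blowing up, create exceptional divisors whose Stein factorisation would produce rational curves in $X$, contradicting the hypothesis. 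This gives (i), and (ii) follows from the irreducibility of the $\mathcal{C}$-equivalence classes. For (iii), flatness of $\varphi$ (equidimensional morphism with $X$ smooth and $W$ normal) implies that every fibre $F_w$ is a connected curve of arithmetic genus one; the normalisation $\widetilde F_w$ then has $p_a(\widetilde F_w) \leq 1$, and the value zero would force a rational component in $F_w \subset X$, which is excluded.

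Property (iv) would follow from Kodaira's classification of singular fibres of relatively minimal elliptic surfaces: restricting $\varphi$ to a general smooth curve $U \subset W^{\mathrm{sm}}$ yields an elliptic surface, and every Kodaira type other than the multiple smooth fibre ${}_m I_0$ contains a rational component, which is ruled out by the hypothesis. Finally, (v) would be obtained from the canonical bundle formula of Koll\'ar and Fujino--Mori applied to the equidimensional elliptic fibration $\varphi$: writing $K_X \sim_\Q \varphi^*(K_W + M_W + B_W)$ with $M_W$ the moduli part and $B_W$ the discriminant, the klt-ness of $(X, 0)$ descends to klt-ness of $(W, M_W + B_W)$, so $W$ is klt. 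The hard step will be the extension of $\varphi$ to a morphism: all the remaining structural properties depend on having an honest equidimensional morphism with one-dimensional elliptic fibres, and the no-rational-curves hypothesis must be used delicately to rule out the indeterminacy that would otherwise arise from the deformations of the $C_t$.
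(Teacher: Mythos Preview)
Your overall architecture matches the paper's: reduce to one-dimensional fibres by applying Theorem~\ref{thm:ellipticrational} to a general fibre of the $\mathcal C$-quotient, upgrade to a morphism, read off (ii)--(iv) from surface theory, and deduce (v) from a canonical bundle formula. The genuine gap is the passage from the almost holomorphic $\mathcal C$-quotient to an honest equidimensional morphism.

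Your proposed mechanism ---``any indeterminacy of $\varphi$ would, after blowing up, create exceptional divisors whose Stein factorisation would produce rational curves in $X$''--- does not work as stated. Blowing up $X$ to resolve indeterminacies produces rational curves on the blow-up, not on $X$; those curves are contracted by $\pi$, so they say nothing about rational curves in $X$ itself. Likewise, the foliation picture (a saturated rank-one subsheaf of $T_X$ with the $C_t$ as leaves) is only available on the open set where the family is smooth with trivial normal bundle; you have given no argument that it extends across the bad locus or that all leaves stay algebraic and compact there. This is exactly where the work lies.

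The paper proceeds differently and more concretely. After observing that $p\colon\widetilde{\mathcal C}\to X$ is birational (one elliptic curve through a general point), it shows $p$ is \emph{finite}, hence an isomorphism by Zariski's Main Theorem, so $\varphi=q\circ p^{-1}$ is a morphism. Finiteness is obtained by contradiction: if $\dim p^{-1}(x)>0$, pick a curve in $T(x)$, pull back the family to get a normal surface fibred in elliptic curves over a smooth curve, with no rational curves in the fibres. The key input is Lemma~\ref{lem:surface}: such a surface is smooth, the fibration is almost smooth and isotrivial, and the surface contains \emph{no exceptional curve}. But the map from this surface back to $X$ contracts a curve over $x$, contradicting the absence of exceptional curves. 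The same surface lemma, applied to $\varphi^{-1}(\Gamma)$ for a general curve $\Gamma\subset W$ through any point $w$, then yields (ii), (iii) and (iv) simultaneously, including over the singular locus of $W$ where your flatness/arithmetic-genus argument breaks down (miracle flatness needs a regular base). Finally, for (v) the paper first checks $K_X$ is numerically trivial on every fibre, invokes \cite{HX13} to get $K_X\sim_{\Q}\varphi^*A$, and then applies Ambro's theorem \cite{Amb05a}; your appeal to the canonical bundle formula is in the same spirit but you should note that one needs $K_X$ to be $\Q$-linearly trivial over $W$ before the formula applies.
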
 

We end the paper with a list of open problems and conjectures.

\section{Preliminaries} \label{sec:prelim}

\begin{nt} 
Let $X$ be a normal projective variety. By $\Omega^{[p]}_X$ we denote the sheaf of reflexive $p$-differentials. In other words, if $j\colon X_{\reg} \to X$ denotes the inclusion of the smooth locus, then 
$$ \Omega^{[p]}_X = j_*\Omega^p_{X_{\reg}}. $$
If $\mathcal F$ is a coherent sheaf on $X$, then for every positive integer $m$, $\mathcal F^{[m]}$ denotes the reflexive tensor power $(\mathcal F^{\otimes m})^{**}$. For a proper morphism $f\colon Y\to X$, we have the reflexive pullback $f^{[*]}\mathcal F:=(f^*\mathcal F)^{**}$.
\end{nt} 

Following \cite{GKP16}, we define singular Calabi-Yau varieties and singular irreducible symplectic varieties as follows. 

\begin{dfn}\label{def:CYSympl}
Let $X$ be a normal projective variety with trivial canonical sheaf $\omega_X \simeq \sO_X$ having canonical singularities.
\begin{enumerate}
\item[(a)] We call $X$ \emph{Calabi-Yau} if 
$$H^0 \big( \widetilde X, \Omega_{\widetilde X}^{[q]} \big ) = 0$$ 
for all $0 < q < \dim X$ and for all finite covers $\widetilde X \to X$ which are \'etale in codimension one.
\item[(b)] We call $X$ \emph{irreducible (holomorphic) symplectic} if there exists a reflexive $2$-form 
$$\sigma \in H^0\big(X, \Omega_{X}^{[2]} \big)$$ 
such that $\sigma$ is everywhere non-degenerate on $X_{\reg}$, and such that for all finite covers $f\colon \widetilde X \to X$ which are \'etale in codimension one, the exterior algebra of global reflexive forms is generated by $f^*\sigma$.
\end{enumerate}
\end{dfn}

When $X$ is smooth, we get back the classical definitions; the fundamental group is automatically finite in the smooth case (which is unclear in the singular setting). 
Note that sometimes a Calabi-Yau manifold or an irreducible symplectic manifold $X$ is assumed to be simply connected. When $X$ has even dimension, finiteness of $\pi_1(X)$ is automatic in our setting by \cite[Proposition 8.23]{GKP16}. 

\begin{rem}  \label{rem4}  
Let $X$ be a normal projective variety with trivial canonical sheaf and with canonical singularities, and let $f\colon \widetilde X \to X$ be a finite cover which is \'etale in codimension one. Since 
$$ H^0\big(\widetilde X,\Omega^{[q]}_{\widetilde X}\big) \simeq H^q\big(\widetilde X,\OO_{\widetilde X}\big) $$
by \cite[Proposition 6.9]{GKP16},  we obtain the vanishing
$$ H^q\big(\widetilde X,\OO_{\widetilde X}\big) = 0 $$
for all $1 \leq q \leq \dim X -1$ in the Calabi-Yau case, respectively for\ all odd numbers $1 \leq q  \leq \dim X -1$ in the symplectic case.
\end{rem}

We need the notion of a pseudoeffective reflexive sheaf of rank $1$, see \cite[Definition 2.1]{HP17} for the definition in the higher rank case. 

\begin{dfn} \label{def:ps} 
Let $X$ be a normal projective variety and $\mathcal L$ a reflexive sheaf of rank $1$ on $X$. Then $\mathcal L$ is \emph{pseudoeffective} if there is an ample divisor $H$ on $X$ such that for all $c > 0$ there are positive integers $i$ and $j$ with $i > cj$ such that
$$ H^0\big(X, \mathcal L^{[i]} \otimes \OO_X(jH)\big) \neq 0.$$
\end{dfn} 

\begin{rem} 
Let $\mathcal L$ be a reflexive sheaf of rank $1$. Then $\mathcal L$ is pseudoeffective if and only if some reflexive power $\mathcal L^{[m]} $ is pseudoeffective.

Assume now that $\mathcal L$ is a $\mathbb Q$-line bundle, i.e.\ some reflexive power $\mathcal L^{[m]} $ is locally free. If $D$ is a Weil divisor associated to $\mathcal L$, then $D$ is $\Q$-Cartier. In this setting, $\mathcal L$ is pseudoeffective if and only the $\Q$-Cartier divisor $D$ is pseudoeffective.
\end{rem} 

\begin{lem} \label{lem1} 
Let $X$ be a $\Q$-factorial projective variety of dimension $n$ with canonical singularities and trivial canonical sheaf, and let $ \sL \subseteq \Omega^{[p]}_X $ be a pseudoeffective reflexive 
subsheaf of rank $1$ for some $p > 0$. Then:
\begin{enumerate}
\item[(i)] $\mathcal L$ is numerically trivial;
\item[(ii)] if $X$ is a Calabi-Yau variety, then $p = n$ and there exists a quasi-\'etale cover $f\colon \widetilde X \to X$  such that  $f^{[*]}\sL \simeq \OO_{\widetilde X}$;
\item[(iii)] if $X$ is an irreducible symplectic variety with a symplectic form $\sigma$,  then $p$ is even and there exists a quasi-\'etale cover $f\colon \widetilde X \to X$  such that  $f^{[*]}\sL \simeq \OO_{\widetilde X} \subseteq \Omega^{[p]}_{\widetilde X}$. Furthermore, the form defined by the inclusion  $\OO_{\widetilde X} \subseteq \Omega^{[p]}_{\widetilde X}$ is a scalar multiple of $f^*(\sigma^{p/2})$.
\end{enumerate} 
\end{lem}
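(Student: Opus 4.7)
The plan is to first prove numerical triviality in (i) via a slope argument on $\Omega^{[p]}_X$, and then in (ii) and (iii) to pass to a quasi-\'etale cover trivializing $\mathcal L$ and identify the resulting reflexive $p$-form using the Hodge-theoretic hypotheses built into Definition \ref{def:CYSympl}.

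For (i), the key input is that on a $\Q$-factorial projective variety with canonical singularities and $\omega_X \simeq \OO_X$, the reflexive cotangent sheaf $\Omega^{[1]}_X$ is semistable with respect to every ample polarization, a fact that comes out of the work surrounding the singular Beauville-Bogomolov decomposition in \cite{GKP16}. Since semistability is preserved by reflexive tensor and wedge products in characteristic zero, $\Omega^{[p]}_X$ is semistable of slope zero with respect to every ample $H$. Consequently, any rank-one reflexive subsheaf $\mathcal L \subseteq \Omega^{[p]}_X$ satisfies $c_1(\mathcal L) \cdot H^{n-1} \leq 0$ for every ample $H$, while the pseudoeffectivity hypothesis (Definition \ref{def:ps}) yields the reverse inequality $c_1(\mathcal L) \cdot H^{n-1} \geq 0$. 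Equality for every ample $H$ then forces $\mathcal L \equiv 0$.

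Given (i), the fact that numerically trivial $\Q$-line bundles on $\Q$-factorial klt varieties with torsion canonical class are torsion (see e.g.\ \cite[Proposition 8.25]{GKP16}) produces a quasi-\'etale cover $f\colon \widetilde X \to X$ with $f^{[*]}\mathcal L \simeq \OO_{\widetilde X}$. Reflexive pullback of the inclusion $\mathcal L \hookrightarrow \Omega^{[p]}_X$ then yields a nonzero section $\omega \in H^0\big(\widetilde X, \Omega^{[p]}_{\widetilde X}\big)$. In case (ii), Definition \ref{def:CYSympl}(a) applied to the quasi-\'etale cover $\widetilde X \to X$ gives $H^0\big(\widetilde X, \Omega^{[q]}_{\widetilde X}\big) = 0$ for $0 < q < n$, so necessarily $p = n$; since $\omega_{\widetilde X} \simeq \OO_{\widetilde X}$, the inclusion $\OO_{\widetilde X} \hookrightarrow \Omega^{[n]}_{\widetilde X}$ is an isomorphism, confirming the claim. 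In case (iii), Definition \ref{def:CYSympl}(b) says the algebra of global reflexive forms on $\widetilde X$ is generated by $f^*\sigma$, so $\omega$ is a scalar multiple of some wedge-power $(f^*\sigma)^k$; degree considerations force $p = 2k$ to be even, and $(f^*\sigma)^{p/2} = f^*(\sigma^{p/2})$ because reflexive pullback commutes with exterior products on the smooth locus.

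The main obstacle is part (i): both the semistability of $\Omega^{[1]}_X$ on $\Q$-factorial klt varieties with trivial canonical class and the torsion-triviality of numerically trivial line bundles in this setting are deep inputs drawn from the singular Beauville-Bogomolov theory of \cite{GKP16}. Once these are taken for granted, (ii) and (iii) reduce to a formal comparison of the global reflexive $p$-form $\omega$ against the vanishing and generation statements embedded in the definitions of singular Calabi-Yau and irreducible symplectic varieties.
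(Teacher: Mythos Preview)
Your argument is correct and matches the paper's almost line for line: semistability of $\Omega^{[p]}_X$ (the paper derives it from semistability of $\mathcal T_X$, \cite[Proposition~5.4]{GKP16}) together with pseudoeffectivity gives (i), and then torsion of $\mathcal L$ produces the quasi-\'etale cover on which Definition~\ref{def:CYSympl} pins down the resulting reflexive $p$-form. One caveat: your assertion that numerically trivial $\Q$-line bundles on $\Q$-factorial klt varieties with torsion canonical class are always torsion is false as stated (abelian varieties are counterexamples); the paper instead invokes Remark~\ref{rem4}, i.e.\ the vanishing $H^1(X,\mathcal O_X)=0$ that holds in the Calabi-Yau and irreducible symplectic cases, to conclude that the numerically trivial sheaf $\mathcal L$ is torsion.
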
 

\begin{proof} 
If $H$ is an ample divisor on $X$, then the tangent sheaf $\mathcal T_X$ is  $H$-semistable by \cite[Proposition 5.4]{GKP16}. Since $K_X\sim0$, we have $\mathcal T_X\simeq\Omega_X^{[n-1]}$, and then it is easily seen that $\Omega^{[p]}_X$ is $H$-semistable. Consequently, $c_1(\sL) \cdot H^{n-1} \leq 0$. Since $\mathcal L$ is pseudoeffective and $X$ is $\Q$-factorial, this shows (i). 

If now $X$ is a Calabi-Yau or an irreducible symplectic variety, then $\sL$ is torsion by (i) and by Remark \ref{rem4}, and it defines a quasi-\'etale cover $f\colon \widetilde X \to X$ such that $f^{[*]}\sL\simeq\OO_{\widetilde X}$, see \cite[Lemma 2.53]{KM98}. Furthermore,
\begin{equation}\label{eq:30}
f^{[*]}\sL \subseteq f^{[*]}\Omega^{[p]}_X \subseteq \Omega^{[p]}_{\widetilde X}.
\end{equation}
When $X$ is a Calabi-Yau, this forces $p = n$, which shows (ii).

When $X$ is an irreducible symplectic variety, \eqref{eq:30} yields a holomorphic reflexive $p$-form $\omega $ on $\widetilde X$. Then (iii) is immediate by the definition of irreducible symplectic varieties.
\end{proof}

For later use we note the following well-known result.

\begin{lem}\label{lem4}
Let $f\colon X\to Y$ be a proper morphism with connected fibres between normal varieties. Let $\pi\colon Y'\to Y$ be a proper birational morphism from a normal variety $Y'$ and let $X'$ be the normalisation of the main component of the fibre product $Y'\times_Y X$ with the commutative diagram
$$
\xymatrix{
X' \ar[d]_{\pi'} \ar[r]^{f'} & Y'  \ar[d]^\pi \\
X \ar[r]_f & Y. &
}
$$
Then the morphism $f'$ has connected fibres.
\end{lem}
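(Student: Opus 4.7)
The plan is to apply the Stein factorisation to $f'$ and then use Zariski's main theorem. Let
$$ f' = g \circ h, \qquad h\colon X' \to Z, \qquad g\colon Z \to Y', $$
be the Stein factorisation of $f'$, so that $h$ is a proper surjective morphism with $h_* \mathcal O_{X'} = \mathcal O_Z$, and in particular $h$ has connected fibres, while $g$ is a finite morphism onto the normal variety $Y'$. It will suffice to show that $g$ is an isomorphism.

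Since $\pi$ is birational, there is a dense open subset $U \subseteq Y$ such that the restriction $\pi\colon U' := \pi^{-1}(U) \to U$ is an isomorphism. Over $U$ the fibre product $Y' \times_Y X$ is canonically isomorphic to $f^{-1}(U)$ via $\pi'$; in particular the main component of $Y' \times_Y X$ contains $f^{-1}(U)$ as a dense open subset. Since $X$ is normal, so is $f^{-1}(U)$, and therefore the normalisation morphism $\pi'\colon X' \to X$ restricts to an isomorphism from $(f')^{-1}(U')$ onto $f^{-1}(U)$. Under this identification $f'|_{(f')^{-1}(U')}$ agrees with $f|_{f^{-1}(U)}$, which has connected fibres by assumption.

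Consequently, over $U'$ the Stein factorisation of $f'$ is trivial, i.e.\ the finite morphism $g$ is an isomorphism over the dense open set $U' \subseteq Y'$. Thus $g\colon Z \to Y'$ is a finite birational morphism onto the normal variety $Y'$, and hence is an isomorphism by Zariski's main theorem. Therefore $f' = g \circ h$ has connected fibres, as required.

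The only subtle point in this argument is the verification that, after normalisation of the main component, the morphism $f'$ still agrees with $f$ over the open set where $\pi$ is an isomorphism; once this is in place the Stein factorisation argument is completely standard.
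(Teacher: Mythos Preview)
Your proof is correct and follows the same overall strategy as the paper: take the Stein factorisation of $f'$ and show that the finite part is an isomorphism via Zariski's main theorem. The difference lies in how birationality of the finite part is established. The paper argues sheaf-theoretically: it writes $\beta_*\mathcal O_{\widehat Y}=\mathcal O_{Y'}\oplus\mathcal F$ with $\mathcal F$ torsion free (citing \cite[Lemma~4.1.14]{Laz04}), observes that $\pi'\colon X'\to X$ has connected fibres by Zariski's main theorem, so $f\circ\pi'=\pi\circ f'$ does too, and then pushes forward by $\pi$ to deduce $\pi_*\mathcal F=0$, hence $\mathcal F=0$. You instead work geometrically over the open set $U\subseteq Y$ where $\pi$ is an isomorphism, identify $f'$ with $f$ there, and conclude directly that $g$ is an isomorphism over $U'$. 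Your route is slightly more elementary in that it avoids the splitting lemma and the torsion-free argument; the paper's route, on the other hand, packages the birationality check entirely in terms of pushforwards of structure sheaves, which generalises more readily to related situations.
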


\begin{proof}
Let 
$$ X' \buildrel {\alpha} \over {\longrightarrow } \widehat Y  \buildrel {\beta} \over {\longrightarrow} Y' $$
be the Stein factorisation of $f'$. By the proof of \cite[Lemma 4.1.14]{Laz04} there is a torsion free sheaf $\mathcal F$ on $Y$ such that $\beta_*\OO_{\widehat{Y}}=\OO_Y\oplus\mathcal F$, hence
\begin{equation}\label{eq:50}
f'_*\OO_{X'}=\OO_Y\oplus\mathcal F.
\end{equation}
Now, since $\pi'$ has connected fibres by Zariski's main theorem, so does $f\circ\pi'=\pi\circ f'$. Pushing forward \eqref{eq:50} by $\pi$ we obtain $\pi_*\mathcal F=0$, hence $\mathcal F=0$. Therefore, $\beta$ is an isomorphism by Zariski's main theorem.
\end{proof}

We also need the following consequence of a more general version of the Negativity lemma.

\begin{lem}\label{lem:exceptional}
Let $f\colon X\to Y$ be a birational morphism from a projective manifold $X$ to a $\Q$-factorial projective variety $Y$. Let $\lambda_i$ be real numbers and let $E_i$ be $f$-exceptional prime divisors on $X$ such that the divisor $\sum \lambda_i E_i$ is pseudoeffective. Then $\lambda_i\geq0$ for all $i$.
\end{lem}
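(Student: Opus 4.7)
The plan is to combine the Boucksom–Demailly–Paun–Peternell duality between the pseudoeffective cone of divisors and the movable cone of curves on a smooth projective variety with a direct geometric construction of movable curve classes. Specifically, for each index $j_0$, I would construct a movable curve class $\alpha_{j_0} \in N_1(X)_{\R}$ with $\alpha_{j_0}\cdot E_{j_0}>0$ and controlled (non-negative) intersections with the other $E_j$; the pseudoeffectivity of $\sum \lambda_i E_i$ then immediately yields $\lambda_{j_0}\geq 0$.

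Denote $Z_j=f(E_j)\subseteq Y$, a closed subvariety of codimension at least two. I would argue by induction on the finite poset of centres $\{Z_j\}$ ordered by inclusion, starting with those $Z_j$ that are maximal. For a maximal $Z_{j_0}$, its irreducibility together with the maximality forbid $Z_{j_0}\subseteq\bigcup_{j\neq j_0}Z_j$, so one can pick a general point $p\in Z_{j_0}\setminus\bigcup_{j\neq j_0}Z_j$ and a general complete intersection curve $C\subset Y$ through $p$ cut out by $\dim Y-1$ general members of a sufficiently ample linear system with $p$ in the base locus. The strict transform $\widetilde C\subset X$ is an irreducible curve with $\widetilde C\cdot E_{j_0}>0$ and $\widetilde C\cdot E_j=0$ for $j\neq j_0$ (the latter because $C\cap Z_j=\emptyset$, so $\widetilde C\cap E_j\subseteq f^{-1}(C\cap Z_j)=\emptyset$). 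Varying both $p\in Z_{j_0}$ and the defining hyperplanes produces a covering family of irreducible curves on $X$, so $[\widetilde C]$ is a movable curve class, and the pseudoeffectivity of $D=\sum\lambda_i E_i$ gives $0\leq D\cdot\widetilde C=\lambda_{j_0}(\widetilde C\cdot E_{j_0})$, hence $\lambda_{j_0}\geq 0$.

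For the inductive step, a non-maximal $Z_{j_0}$ is strictly contained in some larger centres $Z_k$, and by induction $\lambda_k\geq 0$ for all such $k$. Performing the same construction produces an irreducible curve $\widetilde C$ through a general point $p$ of $Z_{j_0}$; the intersections $\widetilde C\cdot E_j$ then split into: a positive contribution $\widetilde C\cdot E_{j_0}>0$; non-negative contributions $\widetilde C\cdot E_k\geq 0$ from those $E_k$ with $Z_k\supsetneq Z_{j_0}$ (the curve is not contained in $E_k$), weighted by non-negative $\lambda_k$; and vanishing contributions from those $E_l$ with $Z_l$ incomparable to $Z_{j_0}$, which are avoided by choosing $p$ generic. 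Hence $D\cdot\widetilde C=\lambda_{j_0}(\widetilde C\cdot E_{j_0})+(\text{non-negative})\geq 0$, and therefore $\lambda_{j_0}\geq 0$. The main obstacle is verifying the genuine movability of the constructed families and setting up the induction cleanly on the inclusion poset of centres; these rely on standard Bertini-type arguments and a careful bookkeeping of the geometry of the $Z_j$.
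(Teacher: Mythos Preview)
Your approach via the BDPP duality between pseudoeffective divisors and movable curves is different from the paper's (which uses Nakayama's divisorial Zariski decomposition together with a negativity lemma of Birkar), but there is a genuine gap: the claim that the strict transform $\widetilde C$ of a general complete intersection curve $C\subset Y$ through a general point $p\in Z_{j_0}$ satisfies $\widetilde C\cdot E_{j_0}>0$ is not justified, and is in fact false in general. The strict transform meets the fibre $f^{-1}(p)$ in finitely many points, and these land on whichever exceptional component a generic arc through $p$ lifts into --- there is no reason this should be $E_{j_0}$ when several exceptional divisors lie over the same (or nested) centres. Relatedly, your assertion that maximality of $Z_{j_0}$ forbids $Z_{j_0}\subseteq\bigcup_{j\neq j_0}Z_j$ already fails whenever two distinct $E_j$ share a centre.

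Concretely, take $Y=\PS^3$, let $X_1\to Y$ be the blowup of a point $q$ with exceptional divisor $E_1^{(1)}\cong\PS^2$, and let $X\to X_1$ be the blowup of a point $r\in E_1^{(1)}$ with exceptional divisor $E_2$; write $E_1\subset X$ for the strict transform of $E_1^{(1)}$. Then $Z_1=Z_2=\{q\}$. A general line $C\subset\PS^3$ through $q$ has strict transform in $X_1$ meeting $E_1^{(1)}$ at a single general point, away from $r$; hence its further strict transform $\widetilde C\subset X$ is disjoint from $E_2$, so $\widetilde C\cdot E_2=0$. Your construction therefore produces no movable class detecting $E_2$, and the argument cannot conclude $\lambda_2\geq 0$. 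To make a curve-based argument work one must manufacture movable classes using intermediate models (e.g.\ complete intersection curves on $X_1$ through $r$), which requires a finer induction than the inclusion poset of centres on $Y$. The paper sidesteps all of this: writing $D=P_\sigma(D)+N_\sigma(D)$ and pushing forward to $Y$ shows both parts are $f$-exceptional; then \cite[Lemma 3.3]{Bir12} applied to $-P_\sigma(D)$ forces $-P_\sigma(D)\geq 0$, and combined with pseudoeffectivity of $P_\sigma(D)$ this gives $P_\sigma(D)=0$, whence $D=N_\sigma(D)\geq 0$.
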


\begin{proof}
Consider Nakayama's divisorial Zariski decomposition as in  \cite[Chapter III]{Nak04}: denote $P:=P_\sigma(\sum \lambda_i E_i)$ and $N_\sigma(\sum \lambda_i E_i)=\sum \alpha_j D_j$ for prime divisors $D_j$ and real numbers $\alpha_j\geq0$. Then pushing forward the relation 
$$\sum \lambda_i E_i=P+\sum \alpha_j D_j$$
to $Y$ via $f$ implies that all $D_i$ are $f$-exceptional and $f_*P\sim_\R 0$; we may assume that $D_i=E_i$, and hence 
$$P=\sum(\lambda_i-\alpha_i)E_i.$$ 
Now applying \cite[Lemma 3.3]{Bir12} for $D:=-P$ implies that $\lambda_i\leq\alpha_i$, hence $\lambda_i=\alpha_i$ since $P$ is pseudoeffective. This concludes the proof.
\end{proof}

\section{Rational connectedness}\label{sec:ratconn}

In this section we prove Theorem \ref{thm:main1}. We first consider maps to lower-dimensional varieties and start with the following lemma. For the definition of a $\Q$-factorialisation, see \cite[Corollary 1.37]{Kol13}.

\begin{lem} \label{lem:reduction} 
Let $X$ be normal projective klt variety of dimension $n$ with $\omega_X \simeq \OO_X$. Let $\varphi\colon \widetilde X \to X$ be a $\Q$-factorialisation. If $X$ is a Calabi-Yau, respectively irreducible symplectic, variety, then so is $\widetilde X$. 
\end{lem}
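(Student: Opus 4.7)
The plan is to reduce every test cover of $\widetilde X$ to a corresponding test cover of $X$ via a Stein factorisation, and then transfer the vanishing (or generation) of reflexive forms back across the small birational morphism $\varphi$.

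First I would verify the basic setup. Since $\varphi$ is small it is crepant, hence $\omega_{\widetilde X}\simeq\varphi^{[*]}\omega_X\simeq\OO_{\widetilde X}$, and $\widetilde X$ inherits canonical singularities from $X$ by comparing discrepancies on a common log resolution. I will also use repeatedly that for a small birational morphism between normal varieties one has $\varphi_*\Omega_{\widetilde X}^{[q]}=\Omega_X^{[q]}$, since both sides are reflexive sheaves on $X$ and agree on the big open where $\varphi$ is an isomorphism.

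The key construction goes as follows. Given a finite cover $g\colon Y\to\widetilde X$ which is \'etale in codimension one, with $Y$ normal, I Stein-factorise the generically finite morphism $\varphi\circ g\colon Y\to X$ as
$$Y\stackrel{h}{\longrightarrow}Y'\stackrel{f'}{\longrightarrow}X,$$
with $h$ birational and $f'$ finite. Over the big open $U\subseteq X$ on which $\varphi$ is an isomorphism, $\varphi\circ g$ is already finite, so $h$ is an isomorphism there; hence $h$ is small. By the same argument, over $U$ the map $f'$ is identified with the quasi-\'etale $g$, so $f'$ is \'etale in codimension one on $X$. In particular $Y'$ is a normal projective variety with canonical singularities and $\omega_{Y'}\simeq\OO_{Y'}$, and hence a legitimate test object for the hypothesis on $X$.

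To conclude, in the Calabi-Yau case the hypothesis applied to $Y'$ gives $H^0(Y',\Omega_{Y'}^{[q]})=0$ for $0<q<n$, and smallness of $h$ together with $h_*\Omega_Y^{[q]}=\Omega_{Y'}^{[q]}$ yields $H^0(Y,\Omega_Y^{[q]})=0$. In the irreducible symplectic case I set $\widetilde\sigma:=\varphi^{[*]}\sigma$; since $\widetilde\sigma^{n/2}$ is the reflexive pullback of a generator of $\omega_X$ it is a generator of $\omega_{\widetilde X}$, so $\widetilde\sigma$ is non-degenerate on $\widetilde X_{\reg}$. For $g$ as above, the exterior algebra of global reflexive forms on $Y'$ is generated by $(f')^{[*]}\sigma$ by assumption, and pushing forward through the small $h$ together with the identity $(f'\circ h)^{[*]}\sigma=g^{[*]}\widetilde\sigma$ translates this into the required generation property on $Y$. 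The main obstacle I expect is the codimension bookkeeping in the Stein factorisation step, namely showing that $h$ is small and that $f'$ is \'etale in codimension one \emph{on $X$} rather than only on $\widetilde X$; this is precisely where it matters that $\varphi$ is small rather than merely birational.
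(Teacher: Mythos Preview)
Your proposal is correct and follows the same overall strategy as the paper: given a quasi-\'etale cover of $\widetilde X$, Stein-factorise its composite with $\varphi$ and invoke the hypothesis on $X$ for the finite part. The one substantive difference is in how you transport reflexive forms across the birational part $h$: you observe that $h$ is small (because $\varphi$ is) and use elementary reflexivity, whereas the paper does not record smallness and instead appeals to the extension theorem of Greb--Kebekus--Kov\'acs--Peternell to get $H^0(\widehat X,\Omega_{\widehat X}^{[q]})\simeq H^0(Z,\Omega_Z^{[q]})$. Your route is more elementary and perfectly adequate here; the paper's route would still work even if the birational part were not small, but that extra generality is not needed for a $\Q$-factorialisation.
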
 

\begin{proof}
Since $\varphi$ is an isomorphism in codimension $1$, we have $\omega_{\widetilde X}\simeq \OO_{\widetilde X}$, and $\widetilde X$ has klt singularities by \cite[Proposition 5.20]{KM98}. 

Assume first that $X$ is a Calabi-Yau variety. Let $\psi\colon \widehat X \to \widetilde X$ be a quasi-\'etale map. We need to show that 
\begin{equation} \label{eq:lohengrin}  
H^0\big(\widehat X,\Omega^{[q]}_{\widehat X}\big) = 0 
\end{equation} 
for $1 \leq q \leq n-1$. 
Let $$\widehat X \buildrel  {\alpha} \over {\longrightarrow} Z \buildrel {\beta} \over {\longrightarrow} X$$
be the Stein factorization of $\varphi\circ\psi$. Then $\alpha$ is birational, $\beta $ is quasi-\'etale and $Z$ has klt singularities again by \cite[Proposition 5.20]{KM98}. Since $X$ is Calabi-Yau, we have
$$ H^0\big(Z,\Omega^{[q]}_Z\big) = 0.$$ 
Then $H^0\big(\widehat X,\Omega^{[q]}_{\widehat X}\big) \simeq H^0\big(Z,\Omega^{[q]}_Z\big)$ by \cite[Theorem 1.4]{GKKP11}, we obtain (\ref{eq:lohengrin}). 

Assume now that $X$ is an irreducible symplectic variety, and let $\omega \in H^0\big(X,\Omega^{[2]}_X\big) $ be the symplectic form. Let $\psi\colon \widehat X \to \widetilde X$ be a quasi-\'etale map. If $q$ is odd, the very same argument as above shows that 
$$ H^0\big(\widehat X,\Omega^{[q]}_{\widehat X}\big) = 0.$$ 
Let now $q $ be even and let $\eta \in H^0\big(\widehat X,\Omega^{[q]}_{\widehat X}\big)$. Using the Stein factorisation as above, we observe that $Z$ is irreducible symplectic, with the symplectic form $\omega_Z = \beta^*(\omega)$. Again by \cite[Theorem 1.4]{GKKP11}, we find $\lambda \in \C$ such that $\eta = \lambda\alpha^*(\omega_Z)$. Since $\alpha^*(\omega_Z)$ is non-degenerate on $\widehat X_{\reg}$, we conclude.
\end{proof} 

\begin{thm} \label{thm1-1} 
Let $X$ be a Calabi-Yau variety or an irreducible symplectic variety. Let $f\colon X \dasharrow Y$ be a dominant rational map to a projective manifold $Y$. If $\dim Y < \dim X$, then $Y$ is rationally connected. 
\end{thm}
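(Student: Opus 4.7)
The plan is to argue by contradiction: assume that $Y$ is not rationally connected, produce a pseudoeffective reflexive rank-$1$ subsheaf $\sM \subseteq \Omega_X^{[p]}$ for some $1 \leq p \leq \dim Y$, and then derive a contradiction from Lemma \ref{lem1} using that $p \leq \dim Y < \dim X$. By Lemma \ref{lem:reduction} we may pass to a $\Q$-factorialisation, and so may assume that $X$ is $\Q$-factorial throughout.

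First I would produce a pseudoeffective rank-$1$ subsheaf $\sL \subseteq \Omega_Y^p$. If $Y$ is not uniruled, the Boucksom-Demailly-Paun-Peternell theorem gives that $K_Y$ is pseudoeffective, and one takes $\sL = \omega_Y$ with $p = \dim Y$. Otherwise, since $Y$ is not rationally connected, the maximal rationally connected fibration $Y \dashto Z$ has $\dim Z \geq 1$ and, by the Graber-Harris-Starr theorem, $Z$ is not uniruled; thus $K_{Z'}$ is pseudoeffective on a smooth model $Z'$ of $Z$. Resolving to $\nu\colon Y' \to Y$ birational and $h\colon Y' \to Z'$ a morphism, the composition $h^*K_{Z'} \hookrightarrow h^*\Omega_{Z'}^{\dim Z} \hookrightarrow \Omega_{Y'}^{\dim Z}$ followed by the pushforward $\nu_*$ yields $\sL \subseteq \Omega_Y^p$ with $p := \dim Z \in [1,\dim Y]$. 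I would then pull $\sL$ back to $X$: on $U := X_\reg \cap \dom(f)$, which has complement of codimension $\geq 2$ because $X$ is normal, the sheaf $(f|_U)^*\sL$ injects into $\Omega_U^p$, and its reflexive extension $\sM$ is a rank-$1$ subsheaf of $\Omega_X^{[p]}$. To verify pseudoeffectivity of $\sM$ in the sense of Definition \ref{def:ps}, I would use a common resolution $\mu\colon W \to X$ with $g := f\circ \mu\colon W\to Y$: the line bundle $g^*\sL$ is pseudoeffective on $W$, and pushing sections of $\sL^{\otimes i} \otimes \OO_Y(jH_Y)$ forward from $W$ to $X$ while absorbing the pseudoeffective $\Q$-Cartier divisor $f^*H_Y$ into a sufficiently large multiple of a fixed ample class on $X$ (using $\Q$-factoriality of $X$) produces enough sections to witness pseudoeffectivity of $\sM$.

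Finally, I would apply Lemma \ref{lem1} to $\sM \subseteq \Omega_X^{[p]}$. In the Calabi-Yau case, part (ii) forces $p = \dim X$, which is incompatible with $p \leq \dim Y < \dim X$, and we are done. In the irreducible symplectic case, part (iii) forces $p$ to be even (which already contradicts our construction if $p$ turns out to be odd) and produces a quasi-\'etale cover $\pi\colon \widetilde X \to X$ such that $\pi^{[*]}\sM \simeq \OO_{\widetilde X}$ is generated by a section $\omega \in H^0\bigl(\widetilde X, \Omega_{\widetilde X}^{[p]}\bigr)$ equal to a non-zero scalar multiple of $\pi^*(\sigma^{p/2})$. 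On the other hand, since $\sM$ is by construction locally a pullback under $f$ on $U$, the form $\omega$ is locally a pullback under $f\circ \pi$ near a general point of $\widetilde X$. Hence at a general smooth point $x$ where $f$ is a submersion, for any $v \in \ker(df_x)$ (of dimension $\dim X - \dim Y > 0$) one has $\iota_v(\sigma^{p/2}) = 0$. Expanding $\iota_v(\sigma^{p/2}) = (p/2)\,\sigma^{p/2-1} \wedge \iota_v\sigma$ and using the Hard Lefschetz property for the non-degenerate $2$-form $\sigma$, which gives injectivity of $\sigma^{p/2-1}\wedge(\cdot)$ on $1$-forms whenever $p \leq \dim X$, one concludes $\iota_v\sigma = 0$ and hence $v = 0$ by non-degeneracy of $\sigma$: a contradiction. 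The Calabi-Yau case is immediate, so the main difficulty will be the symplectic case, where beyond numerical triviality of $\sM$ one must use the identification $\omega = \lambda\,\pi^*\sigma^{p/2}$ from Lemma \ref{lem1}(iii) together with a Lefschetz-type injectivity to reach the contradiction.
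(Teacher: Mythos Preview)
Your proposal is correct and follows the same overall strategy as the paper: produce a pseudoeffective rank-$1$ reflexive subsheaf of $\Omega_X^{[p]}$ with $p < \dim X$ and invoke Lemma~\ref{lem1}. There are two points of difference. First, the paper argues recursively --- it proves as a \emph{claim} that $Y$ is uniruled, then replaces $Y$ by the MRC base and reapplies the claim --- whereas you fold the MRC step into the construction of $\sL \subseteq \Omega_Y^p$ at the outset; this is a minor streamlining. Second, and more substantially, your endgame in the symplectic case is genuinely simpler. The paper builds an elaborate diagram (fibre product of the resolution $X'$ with the quasi-\'etale cover $\widetilde X$, Stein factorisation $\widehat X \to \widehat Y \to Y$, an appeal to Lemma~\ref{lem:exceptional} to control exceptional divisors) in order to produce a nonzero global $d$-form on $\widehat Y$ whose pullback to $\widehat X$ equals $\lambda\,\sigma^{d/2}$, and then concludes that this form is ``degenerate along the fibres of $\alpha$''. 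You bypass all of this by reading off from Lemma~\ref{lem1}(iii) directly that the generating section of $\pi^{[*]}\sM$ is $\lambda\,\pi^*\sigma^{p/2}$, noting that by construction it lies in the image of $(f\circ\pi)^*\Omega_Y^p$ at a general point, and then running the pointwise symplectic Hard Lefschetz computation $\iota_v(\sigma^{p/2}) = (p/2)\,\sigma^{p/2-1}\wedge\iota_v\sigma$ to force $v=0$. Your route is shorter and more transparent; the paper's route has the minor virtue of making the global geometry explicit (it actually exhibits a holomorphic $d$-form on a cover of $Y$), but both arrive at the same linear-algebra contradiction.
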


\begin{proof}
By passing to a $\Q$-factorialisation, by Lemma \ref{lem:reduction} we may assume that $X$ is $\Q$-factorial. Choose a resolution of indeterminacies $\pi\colon X' \to X$ of $f$ with $X'$ smooth and let $f'\colon X' \to Y$ be the induced morphism. Consider the Stein factorisation $g\colon X' \to Y'$ of $f'$. It suffices to show that $Y'$ is rationally connected. By blowing up $X'$ and $Y'$ further, by Lemma \ref{lem4} we may assume that $Y'$ is smooth. Hence, by replacing $Y$ by $Y'$ and $f'$ by $g$, we may assume from the beginning that $f'$ has connected fibres. 

We claim that $Y$ is uniruled. This immediately implies the result: indeed, let $h\colon Y \dasharrow Z$ be an MRC fibration of $Y$ such that $Z$ is smooth, see \cite[Section IV.5]{Kol96}. If $Y$ is not rationally connected, then $\dim Z > 0$ and $Z$ is not uniruled by \cite{GHS03}. But then the claim applied to the dominant rational map $h\circ f\colon X\dashrightarrow Z$ implies that $Z$ is uniruled, a contradiction.

It remains to prove the claim. Assume to the contrary that $Y$ is not uniruled and denote $d = \dim Y$. Then $K_Y$ is pseudoeffective by \cite[Corollary 0.3]{BDPP}. Set
$$\sL' := f'^*\Omega_Y^d.$$
From the canonical injective morphism $f'^*\Omega_Y^d\to \Omega^d_{X'}$ coming from the cotangent sequence we obtain a pseudoeffective rank $1$ reflexive sheaf 
$$\mathcal L=(\pi_*\sL')^{**}\subseteq \Omega_X^{[d]}.$$

This already gives a contradiction if $X$ is a Calabi-Yau variety by Lemma \ref{lem1}(ii) since $d < \dim X$. 

Therefore, for the rest of the proof we assume that $X$ is an irreducible symplectic variety with the symplectic form $\sigma$. By Lemma \ref{lem1}(iii) there exists a quasi-\'etale cover $g\colon\widetilde X \to X$ such that 
\begin{equation}\label{eq:sim}
\OO_{\widetilde X} \simeq g^{[*]}\mathcal L\subseteq \Omega^{[d]}_{\widetilde X}.
\end{equation}
By replacing $\widetilde{X}$ by its $\Q$-factorialisation, by Lemma \ref{lem:reduction} we may assume that $\widetilde{X}$ is a $\Q$-factorial irreducible symplectic variety. 

Let $\widehat X$ be a desingularisation of the main component of the fibre product $X' \times_X \widetilde X$ with induced morphisms $\tau\colon \widehat X \to  X'$ and $g'\colon \widehat X \to X'$; note that $\tau $ is birational. 
Consider the Stein factorization
$$ \widehat X \buildrel {\alpha} \over {\longrightarrow } \widehat Y  \buildrel {\beta} \over {\longrightarrow} Y $$
of $f' \circ g'$. Blowing up $\widehat X$ and $\widehat Y$ further, by Lemma \ref{lem4} we may assume $\widehat Y$ smooth; then the map $\beta$ becomes generically finite.
$$
\xymatrix{
 & & \widehat Y \ar[d]^\beta \\
\widehat X\ar[d]_{\tau} \ar[r]_{g'} \ar[urr]^\alpha & X'  \ar[d]_\pi \ar[r]^{f'} & Y \\
\widetilde X \ar[r]_g & X \ar@{-->}[ur]_f, &
}
$$
Let 
$$ \widehat {\mathcal L} := g'^*\mathcal L',$$
and let $X^\circ\subseteq X$ be an open subset with $\codim_X(X\setminus X^\circ)\geq2$ over which $g$ is \'etale. Then by flat base change we have an isomorphism $ (g^*\mathcal L)|_{X^\circ} \simeq \big(\tau_*\widehat  {\mathcal L}\big)|_{X^\circ}$, and it extends to an isomorphism 
$$ g^{[*]}\mathcal L \simeq \big(\tau_*\widehat  {\mathcal L}\big)^{**}.$$
This and \eqref{eq:sim} imply
$$ \big(\tau_*\widehat {\mathcal L}\big)^{**} \simeq \OO_{\widetilde X},$$ 
thus there exist integers $\lambda_i $ and $\tau$-exceptional prime divisors $E_i$ on $\widehat  X$ such that
$$\textstyle \widehat {\mathcal L} \simeq \OO_{\widehat X}\big(\sum \lambda_i E_i\big).$$
Then Lemma \ref{lem:exceptional} implies that $\lambda_i\geq0$ for all $i$ and therefore
$$ h^0\big(\widehat X, \widehat {\mathcal L}\big) = 1.$$ 
Since $ \widehat {\mathcal L} = \alpha^* \beta^*\Omega_Y^d$ and since $\alpha$ has connected fibres, we get
$$h^0\big(\widehat Y,\beta^*\Omega_Y^d\big) =1.$$
Since we have an inclusion $\beta^*\Omega_Y^d\subseteq\Omega_{\widehat{Y}}^d$, it follows finally that
\begin{equation}\label{eq:40}
h^0\big(\widehat Y,\Omega_{\widehat{Y}}^d\big) \ne 0. 
\end{equation}
By \cite[Theorem 1.4]{GKKP11} we have $ H^0\big(\widehat X,\Omega^d_{\widehat X}\big) \simeq H^0\big(\widetilde X, \Omega^{[d]}_{\widetilde X}\big)$, hence 
\begin{equation}\label{eq:41}
h^0\big(\widehat X,\Omega^d_{\widehat X}\big) = 1
\end{equation}
as $\widetilde X$ is irreducible symplectic. By \eqref{eq:40} we can pick a nonzero form $ \eta \in H^0\big(\widehat Y,\Omega^d_{\widehat Y}\big)$, hence by \eqref{eq:41} then there exists $\lambda \in \C\setminus\{0\}$ such that 
$$ \alpha^*(\eta) = \lambda \tau^* g^*(\sigma^{d/2}).$$
The form $ \tau^* g^*(\sigma^{d/2})$ is therefore degenerate along the fibres of $\alpha$, and since $\dim Y < \dim X$, this is a contradiction which finishes the proof. 
\end{proof} 

We next discuss the case $\dim Y = \dim X.$ 
We say that a dominant rational map $f\colon X\dashrightarrow Y$ between normal projective varieties has \emph{ramification in codimension $1$} if there exists an irreducible analytic set $A \subseteq  Y$ of codimension $1$ such that:
\begin{enumerate}
\item[(i)] there exists a resolution of indeterminacies $\pi\colon X' \to X$ of $f$ such that the exceptional set of $\pi$ does not project onto $A$; 
\item[(ii)] for any $a \in A$ there exists $b \in f^{-1}(a)$ such that the Jacobian of $f$ does not have maximal rank at $b$. 
\end{enumerate} 

In this terminology we have:

\begin{thm} \label{thm1-2} 
Let $X$ be a Calabi-Yau variety or an irreducible symplectic variety. Let $f\colon X \dasharrow Y$ be a dominant rational map to a normal projective variety $Y$ with  canonical singularities such that $\dim X = \dim Y$, and assume that $f$ has ramification in codimension $1$. Then $Y$ is rationally connected.
\end{thm}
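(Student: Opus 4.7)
The plan is to follow the pattern of the proof of Theorem \ref{thm1-1}, using the top exterior power of the cotangent sheaf and exploiting the codimension-$1$ ramification to produce a nontrivial effective contribution to the pseudoeffective calculus. I would argue by contradiction, assuming $Y$ is not rationally connected.

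The first reduction is to the case where $Y$ is not uniruled. If $Y$ were uniruled but not rationally connected, an MRC fibration $h\colon Y\dashrightarrow Z$, with $Z$ smooth after blowing up, would satisfy $0<\dim Z<\dim Y=\dim X$ and $Z$ would be non-uniruled by \cite{GHS03}. Theorem \ref{thm1-1} applied to the composition $h\circ f\colon X\dashrightarrow Z$ would then force $Z$ rationally connected, a contradiction. Using Lemma \ref{lem:reduction}, I would also pass to a $\Q$-factorialisation and assume $X$ is $\Q$-factorial.

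Next, take a resolution of indeterminacies $\pi\colon X'\to X$ with induced morphism $f'\colon X'\to Y$, and a resolution $\mu\colon Y'\to Y$; blowing up $X'$ further, I may assume $f'=\mu\circ g'$ for some morphism $g'\colon X'\to Y'$. Since $g'$ is a generically finite surjective morphism between smooth projective varieties, one has the ramification formula
\[
K_{X'}=g'^{*}K_{Y'}+R,
\]
with $R$ the effective ramification divisor. Writing $R=R^{\mathrm{hor}}+R^{\mathrm{vert}}$ for the decomposition into non-$\pi$-exceptional and $\pi$-exceptional parts, the codimension-$1$ ramification hypothesis forces $R^{\mathrm{hor}}\neq 0$: at the generic point of the prescribed $A\subseteq Y$ the birational morphism $\mu$ is an isomorphism, so ramification of $f$ above $A$ produces a ramification component of $g'$ dominating the strict transform $\widetilde{A}\subseteq Y'$, and by condition (i) of the definition the corresponding prime divisor on $X'$ does not lie in $\Exc(\pi)$.

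Since $X$ has canonical singularities and $\omega_X\simeq\OO_X$, one has $K_{X'}=E$ with $E$ effective and $\pi$-exceptional, so
\[
g'^{*}K_{Y'}=E-R^{\mathrm{vert}}-R^{\mathrm{hor}}.
\]
As $Y$, hence $Y'$, is non-uniruled, $K_{Y'}$ is pseudoeffective by \cite{BDPP}, and thus $g'^{*}K_{Y'}$ is pseudoeffective on $X'$. Pushing this relation forward via $\pi_*$ annihilates the $\pi$-exceptional summands and yields
\[
\pi_{*}g'^{*}K_{Y'}=-\pi_{*}R^{\mathrm{hor}}.
\]
The left-hand side is a pseudoeffective class on the $\Q$-factorial variety $X$, while the right-hand side is a nonzero anti-effective Weil divisor. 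Intersecting with $H^{n-1}$ for an ample divisor $H$ on $X$ and invoking the projection formula gives the desired contradiction $0\leq \pi_{*}g'^{*}K_{Y'}\cdot H^{n-1}=-\pi_{*}R^{\mathrm{hor}}\cdot H^{n-1}<0$. The main obstacle I foresee is the careful verification that $R^{\mathrm{hor}}\neq 0$: one must combine the two clauses of the codimension-$1$ ramification definition and track ramification of $f$ through the factorisation $X'\to Y'\to Y$ to a prime divisor on $X'$ that dominates $\widetilde{A}$ and is not $\pi$-exceptional. Alternatively, one could recast the argument in the language of reflexive sheaves, showing that the pseudoeffective rank-$1$ subsheaf $(\pi_{*}g'^{*}\omega_{Y'})^{**}\subseteq\Omega_X^{[n]}\simeq\OO_X$ is trivial by Lemma \ref{lem1}(i), which then forces $g'^{*}K_{Y'}$ to be $\pi$-exceptional on $X'$ by Lemma \ref{lem:exceptional}, contradicting $R^{\mathrm{hor}}\neq 0$.
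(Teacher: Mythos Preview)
Your proposal is correct and follows essentially the same route as the paper: reduce via the MRC fibration and Theorem \ref{thm1-1} to showing $Y$ is uniruled, assume $K_Y$ pseudoeffective, write the ramification formula on a resolution of indeterminacies, push forward to the $\Q$-factorialised $X$, and obtain a pseudoeffective class linearly equivalent to a nonzero anti-effective divisor. The paper simply asserts ``we may assume $Y$ is smooth'' and works directly with $f'^*K_Y$, whereas you insert an explicit resolution $\mu\colon Y'\to Y$ and track the ramification of $g'$ through it; this makes your treatment of the non-exceptional ramification component slightly more delicate (as you note), but the substance of the contradiction $0\sim_\Q L+E$ with $L$ pseudoeffective and $E\neq 0$ effective is identical.
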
 

The case when $Y$ is the quotient of $X$ by a finite group has been studied in detail in \cite{KL09}. 

\begin{proof}   
It suffices to show that $Y$ is uniruled: indeed, then we may consider an MRC fibration $Y\dashrightarrow Z$ of $Y$, and we have $\dim Z<\dim Y$. Theorem \ref{thm1-1} implies that $Z$ must be rationally connected, which contradicts the main result of \cite{GHS03} if $\dim Z>0$. Therefore, $Z$ must be a point, hence $Y$ is rationally connected.
 
It remains to show the claim. We may assume that $Y$ is smooth. By passing to a $\Q$-factorialisation, by Lemma \ref{lem:reduction} we may assume that $X$ is $\Q$-factorial.  Arguing by contradiction, assume that $Y$ is not uniruled, and hence that $K_Y$ is pseudoeffective by \cite[Corollary 0.3]{BDPP}. Set $n = \dim X$, choose a resolution of indeterminacies $\pi\colon X' \to X$ of $f$ with $X'$ smooth and let $f'\colon X' \to Y$ be the induced morphism. Since $f'^*\Omega^n_Y \subseteq \Omega^n_{X'}$, we may write
\begin{equation}\label{eq:1}
K_{X'} \sim f'^*K_Y + \sum a_i E_i,
\end{equation}
where the $E_i $ are the irreducible components of the ramification divisor, and $a_i>0$ for all $i$. The divisor $L := \pi_*f'^*K_Y $ is pseudoeffective and 
$$\textstyle E:=\pi_*\big(\sum a_i E_i\big)$$ 
is a non-zero effective divisor, since $f$ is ramified in codimension $1$. Pushing forward the relation \eqref{eq:1} to $X$ by $\pi$ we obtain
$$ 0 \sim_\Q L + E, $$
a contradiction which finishes the proof.
\end{proof} 

\emph{To sum up:} we may basically say that given a rational dominant map $f\colon X \dasharrow Y $ from a Calabi-Yau variety or an irreducible symplectic variety $X$ to a projective manifold $Y$, then $Y$ is rationally connected unless $f$ is a composition of a birational and an \'etale map. 

\begin{rem}
One cannot expect $Y$ to be rational, see Example \ref{ex}.
\end{rem} 

\section{Maps from $K$-trivial varieties to Fano manifolds}

We recall the notion of a quotient of a normal projective variety with respect to a covering family and refer to \cite{Ca81,Ca04,BCE+} for the details. 

\begin{dfn}
A \emph{covering family} $(C_t)_{t \in T}$ of $X$ by varieties of dimension $d$ is given by its \emph{graph}, respectively \emph{projection}
$$ p\colon \mathcal C \to X,\quad\text{respectively}\quad q\colon \mathcal C \to T,$$ 
where:
\begin{enumerate}
\item[(a)] $T$ and  $\mathcal C \subseteq X \times T$ are irreducible reduced projective varieties (with projections $p = \pr_X|_\mathcal{C}$ and $q = \pr_T|_\mathcal{C}$);  
\item[(b)] $p$ is surjective and $q$ is equidimensional;
\item[(c)]  the cycle-theoretic preimage $C_t$ of $t \in T$ is a purely $d$-dimensional cycle in $X$. 
\end{enumerate}
By abuse of notation we also say that $\mathcal C$ is a covering family. 
\end{dfn}

For details, we refer to \cite[\S I.4]{Kol96}; for the analytic case see \cite{Bar75,BM14}. 

We can form the quotient of $X$ with respect to this family: 

\begin{pro} \label{quot} 
Let $X$ be a normal projective variety and let $(C_t)_{t \in T}$ be a covering family of $X$. Then there is an almost holomorphic map 
$$ f\colon X \dasharrow Z$$
to a projective manifold $Z$ such that for two very general points $x_1,x_2 \in X$ we have $f(x_1) = f(x_2)$ if and only if $x_1 $ and $x_2$ can be joined by a chain of varieties $C_t$. The map $f$ is \emph{the quotient of $X$ with respect to the family $(C_t)_{t\in T}$}. We say that $f$, or $Z$, is the \emph{$\mathcal C$-quotient of $X$}. 
\end{pro}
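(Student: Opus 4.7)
The plan is to invoke the quotient theory for covering families developed by Campana \cite{Ca81, Ca04} and refined in \cite{BCE+}, and then to arrange the target to be smooth by composing with a resolution of singularities.

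First I would formalise the chain-connectedness relation. For each integer $k \geq 1$, consider the $k$-fold fibre product
$$ \mathcal C^{(k)} := \mathcal C \times_X \mathcal C \times_X \cdots \times_X \mathcal C$$
($k$ factors of $\mathcal C$, glued via $p$), and let $R_k \subseteq X \times X$ be the image of $\mathcal C^{(k)}$ under the map sending a chain to its endpoints in $X$. Each $R_k$ is constructible by Chevalley's theorem, and $R_1 \subseteq R_2 \subseteq \cdots$ forms an increasing chain. Since $X \times X$ is Noetherian, the Zariski closures $\overline{R_k}$ stabilise at some $k_0$, and I would let $\Gamma \subseteq X \times X$ denote the (unique) irreducible component of $\overline{R_{k_0}}$ that dominates $X$ via the first projection. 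The fibre of $\Gamma$ over a very general $x \in X$, intersected with $\bigcup_k R_k$, then agrees with the chain-connectedness equivalence class of $x$ relative to the family $\mathcal C$.

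Next I would apply Campana's quotient construction to this situation. The key input is that $\mathcal C \to T$ is equidimensional and $\mathcal C \to X$ is surjective, which ensures that the family of equivalence classes through very general points forms a covering family of $X$ parametrised by a suitable subvariety of the Chow scheme of $X$ (or Barlet space in the analytic setting). Campana's theorem, in the form stated e.g.\ in \cite{Ca04} and \cite{BCE+}, then yields an almost holomorphic rational map $f_0 \colon X \dashrightarrow Z_0$ to a normal projective variety $Z_0$ whose very general fibre equals precisely the chain-connectedness class of the corresponding very general point. Almost holomorphicity of $f_0$ is guaranteed because properness of $q$ makes chains of bounded length a proper family.

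Finally, to obtain a smooth target I would choose a resolution of singularities $\mu \colon Z \to Z_0$ and set $f := \mu^{-1} \circ f_0 \colon X \dashrightarrow Z$; since $\mu$ is birational and $f_0$ is almost holomorphic, so is $f$, and the fibres of $f$ over very general points of $Z$ are in bijection with those of $f_0$ over very general points of $Z_0$, giving the desired characterisation. The main obstacle in this programme is the passage from the constructible set-theoretic equivalence relation to the actual almost holomorphic rational map---this step is exactly Campana's theorem, and its proof (which I would treat as a black box here) uses the Chow/Barlet-space machinery to produce the quotient as an honest algebraic family rather than merely a set of equivalence classes.
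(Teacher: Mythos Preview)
Your proposal is correct and matches the paper's approach: the paper's own proof consists entirely of the citation ``See \cite{Ca81,Ca04}'', so you are invoking exactly the same source, only with additional detail on how the chain relation is set up and how one passes to a smooth target via a resolution. Your extra care in checking that composing with $\mu^{-1}$ preserves almost holomorphicity is a welcome addition, since the paper's statement asks for $Z$ to be a manifold while Campana's raw output is only normal.
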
 

\begin{proof}  
See \cite{Ca81,Ca04}.
\end{proof} 

\begin{dfn} 
Let $X$ be a normal projective variety and let $\mathcal C$ be a covering family of $X$. Then:
\begin{enumerate} 
\item[(a)] $X$ is \emph{$\mathcal C$-chain connected} if its $\mathcal C$-quotient is a point;
\item[(b)] $X$ is \emph{elliptically chain connected} if it is $\mathcal C$-chain connected for some family $\mathcal C$ such that the normalisation of the  general member is an elliptic curve;
\item[(c)] $X$ is \emph{torically chain connected} if it is $\mathcal C$-chain connected for some family $\mathcal C$ whose general member is birational to an abelian variety.
\end{enumerate} 
\end{dfn} 

\begin{lem} \label{lem:triv} 
Let $X$ be a $\Q$-factorial projective variety with $\rho(X) = 1$ and let $\mathcal C$ be a covering family of $X$. Then $X$ is $\mathcal C$-chain connected.
\end{lem}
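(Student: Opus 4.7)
The plan is to argue by contradiction: assume the $\mathcal{C}$-quotient $f\colon X\dashrightarrow Z$ has $\dim Z>0$, pull back an ample class from $Z$ to $X$, and use $\rho(X)=1$ together with $\Q$-factoriality to force this pulled-back class to be ample on $X$, then exhibit curves on which it must intersect zero.

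Concretely, I would choose a very ample divisor $H$ on $Z$, take a resolution of indeterminacies $\pi\colon X'\to X$ of $f$ so that $f'=f\circ\pi\colon X'\to Z$ is a morphism, and form
$$ D := \pi_*(f'^* H).$$
I would verify first that $D$ is a nonzero effective Weil divisor on $X$. Indeed, since $f$ is almost holomorphic, its indeterminacy locus has no component dominating $Z$, so the non-$\pi$-exceptional part of $f'^*H$ is nonzero; hence $D\neq 0$. Because $X$ is $\Q$-factorial, $D$ is $\Q$-Cartier, and because $\rho(X)=1$ and $D$ is an effective nonzero $\Q$-Cartier divisor, $D$ must be ample.

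The other side of the contradiction comes from the defining property of the $\mathcal{C}$-quotient (Proposition \ref{quot}): for very general $t\in T$, any two points of $C_t$ lie in the same fibre of $f$, so a general $C_t$ is contained in a fibre of $f$. In particular, a general $C_t$ meets neither the indeterminacy locus of $f$ nor the locus over which the very general fibre degenerates, and $f(C_t)$ is a point. Therefore $D|_{C_t}$ is numerically trivial, and if $d=\dim C_t$, then
$$ D^d \cdot C_t = 0. $$
On the other hand, since $D$ is ample and $C_t$ is an irreducible $d$-dimensional subvariety of $X$, one has $D^d \cdot C_t > 0$, a contradiction. Hence $\dim Z = 0$, i.e.\ $X$ is $\mathcal{C}$-chain connected.

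The only delicate step is the justification that $D=\pi_*(f'^*H)$ is nonzero; everything else is formal. The almost-holomorphy of the quotient is essential here: without it, all of $f'^*H$ could in principle be swallowed by $\pi$-exceptional divisors. No other subtlety arises.
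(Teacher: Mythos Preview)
Your proof is correct and follows essentially the same approach as the paper: pull back an ample divisor from $Z$, push it forward to $X$ to obtain a nonzero effective $\Q$-Cartier divisor (using almost-holomorphy and $\Q$-factoriality), and derive a contradiction with $\rho(X)=1$ by intersecting with cycles contracted by $f$. The only cosmetic difference is that the paper intersects with an arbitrary curve in a general fibre of $f$, whereas you intersect with a general member $C_t$ (allowing $\dim C_t>1$); both yield the same contradiction.
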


\begin{proof} 
Consider the $\mathcal C$-quotient $f\colon X \dasharrow Z$ and assume that $\dim Z > 0.$ Since the map $f$ is almost holomorphic, we may choose a resolution of indeterminacies $\pi\colon X' \to X$ of $f$ 
which does not alter a general fibre of $f$, and let $f'\colon X' \to Y$ be the induced morphism. Let  $D$ be a general ample divisor on $Z$. Then $D_0 = \pi_*f'^*D$ is an effective \emph{non-zero} $\Q$-Cartier divisor on $X$. Since $D_0 \cdot C = 0 $ for each curve $C$ which is contracted by the almost holomorphic map $f$, the divisor $D_0$ cannot be ample, and this contradicts the assumption on the Picard number of $X$.
\end{proof} 

The following is an analogue of rational connectedness in the case of elliptic curves \cite[Definition 3.3]{Gou16}. 

\begin{dfn} 
A projective manifold $X$ is said to be \emph{elliptically connected} if there exists a covering family of elliptic curves $\mathcal C = (C_t)_{t \in T}$  such that the canonical morphism
$$ \mathcal C \times_T  \mathcal C \to X \times X$$
is surjective. 
\end{dfn} 

\begin{rem} \label{rem2}   
(1) Gounelas \cite[Theorem 6.2]{Gou16} showed that an elliptically connected projective manifold is either rationally connected or admits a rationally connected fibration over
an elliptic curve.

(2) If $X$ is chain connected with respect to a family $(C_t)$ of curves, then nevertheless each class $[C_t]$ might be on the boundary of the movable cone of curves $\NMb(X)$, see \cite[Example 8.8]{BDPP}. However, each $[C_t]$ is in the interior of $\NEb(X)$ by \cite[Theorem 2.6]{BCE+}. 
\end{rem} 

We now show that in contrast to connectedness concepts for rational curves on projective manifolds, elliptic chain connectedness is a weaker property than elliptic connectedness.

\begin{thm} \label{thm:map} 
Let $X$ be a $\Q$-factorial projective threefold with $\rho(X) = 1$ and with terminal singularities. Assume that $K_X\sim0$ and that there exists a finite surjective map $f\colon X \to Y$ to a projective manifold $Y$ which contains a smooth rational curve $C$ with trivial normal bundle. Then $X$ is elliptically chain connected, but not elliptically connected. 
\end{thm}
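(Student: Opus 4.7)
The plan splits into two parts. First, I will produce a covering family of smooth elliptic curves on $X$; combined with $\rho(X)=1$, Lemma~\ref{lem:triv} then immediately yields that $X$ is elliptically chain connected. Second, to rule out elliptic connectedness I will invoke \cite[Theorem 6.2]{Gou16} as recalled in Remark~\ref{rem2}(1): if $X$ were elliptically connected, it would be either rationally connected or admit a rationally connected fibration onto an elliptic curve, and either alternative makes $X$ uniruled, contradicting that $K_X\sim 0$ is pseudoeffective.

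For the construction, the isomorphism $N_{C/Y}\simeq\OO_C^{\oplus 2}$ yields $h^0(N_{C/Y})=2$ and $h^1(N_{C/Y})=0$, so deformations of $C$ are unobstructed and form a smooth $2$-dimensional component $T$ of the Hilbert scheme of $Y$, with universal family $\mathcal C\to T$ and evaluation $\mathcal C\to Y$. A fixed point $y\in C$ imposes one condition on deformations, since $H^0(N_{C/Y}(-y))=H^0(\OO_C(-1)^{\oplus 2})=0$; thus the fiber of $\mathcal C\to Y$ over $y$ is zero-dimensional, and because $\dim\mathcal C=\dim Y=3$ the evaluation is dominant, so $(C_t)_{t\in T}$ covers $Y$. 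I then pick an irreducible component $V$ of the base change $\mathcal C\times_Y X$ that dominates $X$; since $\dim X=3$, the image of $V$ in $T$ must be all of $T$, and after replacing $T$ by the normalization of the Stein factorization of $V\to T$, I obtain a covering family $(\tilde C_t)_{t\in T}$ in $X$ whose general member $\tilde C_t$ is an irreducible curve mapping finitely onto $C_t$ with some degree $e\leq\deg f$. Because $X$ has terminal singularities, $\Sing X$ is a finite set, so $\tilde C_t\subseteq X_{\reg}$ for general $t$; and because a general $C_t$ meets the branch divisor $f(R)\subseteq Y$ transversally at points of simple ramification, the local preimage along $C_t$ is smooth, so a general $\tilde C_t$ is smooth.

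The heart of the argument is the equality $g(\tilde C_t)=1$. Comparing the conormal sequence of $C_t\subseteq Y$ pulled back along $f|_{\tilde C_t}$ with the conormal sequence of $\tilde C_t\subseteq X$ produces a commutative ladder whose middle column is the restriction of $f^*\Omega_Y\to\Omega_X$ and whose right column is $(f|_{\tilde C_t})^*\Omega_{C_t}\to\Omega_{\tilde C_t}$. Both are maps between locally free sheaves of equal rank on the smooth curve $\tilde C_t$ whose determinants are not identically zero (as $f$ and $f|_{\tilde C_t}$ are generically \'etale), so both are injective. The snake lemma then forces the induced map on conormal sheaves
$$(f|_{\tilde C_t})^*N^*_{C_t/Y}\hookrightarrow N^*_{\tilde C_t/X}$$
to be injective as well. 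Dualizing yields
$$N_{\tilde C_t/X}\hookrightarrow (f|_{\tilde C_t})^*N_{C_t/Y}\simeq\OO_{\tilde C_t}^{\oplus 2},$$
so $\deg\det N_{\tilde C_t/X}\leq 0$. Adjunction for $\tilde C_t\subseteq X_{\reg}$ combined with $K_X\sim 0$ gives $2g(\tilde C_t)-2=K_X\cdot\tilde C_t+\deg\det N_{\tilde C_t/X}\leq 0$, hence $g(\tilde C_t)\leq 1$. Non-uniruledness of $X$, which follows from $K_X$ being pseudoeffective, excludes $g(\tilde C_t)=0$, so $g(\tilde C_t)=1$.

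The main technical obstacle is controlling the generic member $\tilde C_t$ — ensuring that it is an irreducible smooth curve as $t$ varies, which requires a careful choice of component of $\mathcal C\times_Y X$ together with the observation that simple ramification of $f$ along a general $C_t$ produces smooth preimages — and verifying the snake-lemma injectivity on $\tilde C_t$ itself rather than only at its generic point. Once this is in place, Lemma~\ref{lem:triv} applied to the covering family $(\tilde C_t)$ and $\rho(X)=1$ yields elliptic chain connectedness, while the failure of elliptic connectedness follows from Gounelas's dichotomy applied to the non-uniruled $K$-trivial threefold $X$ as outlined in the first paragraph.
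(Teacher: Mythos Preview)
Your argument is correct and follows the paper's line exactly: deform $C$ to a covering family on $Y$, pull back through $f$, show a general member upstairs is a smooth elliptic curve, apply Lemma~\ref{lem:triv} with $\rho(X)=1$, and invoke Gounelas's dichotomy together with non-uniruledness to exclude elliptic connectedness. Where the paper outsources the key step---smoothness and trivial normal bundle of $\widetilde C_t$---to \cite[Lemme 2.2.3]{Ame04} and then reads off $g=1$ from adjunction, you instead run a self-contained snake-lemma argument on conormal sequences to obtain $N_{\widetilde C_t/X}\hookrightarrow\OO_{\widetilde C_t}^{\oplus 2}$, deduce $g\leq 1$, and rule out $g=0$ via non-uniruledness; this recovers the same conclusion (and a posteriori the triviality of the normal bundle) without the external reference.

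One minor imprecision: your justification for smoothness of $\widetilde C_t$ appeals to ``simple ramification'', but no such hypothesis is needed or available. What one actually uses is that a general $C_t$ meets the branch divisor $B$ transversally at points where $B$ is smooth and $df$ has rank exactly $2$ with image $T_qB$; these conditions fail only along a codimension-$2$ subset of $Y$, which a general member of a $2$-dimensional covering family avoids. With that correction the smoothness claim holds, and the rest of your argument goes through.
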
 

\begin{proof} 
First, since $X$ is not uniruled, it is not elliptically connected by \cite[Theorem 6.2]{Gou16}. 

It remains to show that $X$ is elliptically chain connected. It is a classical fact that the deformations of $C$ cover the whole variety $Y$, hence we obtain a covering family of rational curves $(C_s)_{s \in S}$ of $Y$. For each $s\in S$, let $\widetilde C_s $ be an irreducible component of $f^{-1}(C_s)$.  A general curve $C_s$ does not meet the image of the singular locus of $X$ (which is a finite set) by the morphism $f$, hence $f^{-1}(C_s)$ is contained in the smooth locus of $X$. Therefore, by an argument analogous to that of \cite[Lemme 2.2.3]{Ame04}, for a general $s\in S$ the curve $\widetilde C_s$ is smooth and has trivial normal bundle. Thus, $\widetilde C_s$ is an elliptic curve by the adjunction formula since $K_X$ is trivial and moreover, we obtain a covering family $\widetilde{\mathcal C}=(\widetilde C_s)_{s \in T}$ in $X$.  By Lemma \ref{lem:triv}, $X$ is $\widetilde{\mathcal C}$-chain connected. 
\end{proof} 

\begin{exa} \label{ex}  
It is easy to construct examples in the setting of Theorem \ref{thm:map}, see \cite[Theorem 1]{Cy03}. Let $Y$ be a Fano threefold of index $2$ and $\rho(Y) = 1$. Then $Y$ is covered by a family of lines whose general element has a trivial normal bundle, see \cite[Chapter 3]{IP99}. Take a smooth divisor $D \in | {-}2K_Y |$ and let $f\colon X \to Y$ the cyclic cover of degree $2$ attached to $D$. Then $X$ is a Calabi-Yau threefold with $h^{1,1}(X) = h^{1,1}(Y)$; furthermore, $\pi_1(X) = \pi_1(Y) = 0 $ by \cite{Cor81}, hence $\rho(X)=1$. By Theorem \ref{thm:map}, $X$ is elliptically chain connected, but not elliptically connected. 

Instead of considering a Fano threefold of index $2$, we might also take a Fano threefold of index $1$ and consider conics instead of lines. In any case, we obtain examples where $Y$ is not rational. 

We finally mention that Voisin \cite[Example 2.17]{Voi03} showed that any smooth double cover $X \to \PS^n$ is elliptically chain connected unless $X$ is of general type. 
\end{exa} 

In many cases the condition $\rho (X) = 1$ in Theorem \ref{thm:map} is redundant:

\begin{thm} \label{thm:map2} 
Let $X$ be a smooth Calabi-Yau threefold and let $f\colon X \to Y$ be a finite map to a smooth Fano threefold $Y$ with $\rho(Y) = 1$. Assume one of the following:
\begin{enumerate}
\item[(i)] $Y$ is of index $2$, and $H^3 = 3$ or $H^3 = 4$ for the ample generator $H$ of ${\Pic}(Y)$, i.e.\ $Y$ is a cubic hypersurface in $\PS^4$ or a complete intersection of two quadrics in $\PS^5$;
\item[(ii)] $Y$ is of index $1$ and $12 \leq -K_Y^3 \leq 18$. 
\end{enumerate} 
Then $X$ is elliptically chain connected. 
\end{thm}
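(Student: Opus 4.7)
The strategy is to imitate Theorem~\ref{thm:map}: we produce a covering family $\widetilde{\mathcal C}$ of elliptic curves on $X$ by lifting a covering family $\mathcal C$ of rational curves with trivial normal bundle on $Y$, then prove $\widetilde{\mathcal C}$-chain connectedness of $X$. Because we no longer assume $\rho(X)=1$, Lemma~\ref{lem:triv} is unavailable; instead we exploit the $\mathcal C$-chain connectedness of $Y$ itself, which holds by Lemma~\ref{lem:triv} since $\rho(Y)=1$.

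In case~(i), for a cubic threefold or a smooth complete intersection of two quadrics in $\PS^5$, it is classical that $Y$ is covered by a two-dimensional family of lines whose general member has normal bundle $\OO\oplus\OO$; note that the remaining index-$2$ Fano $V_5$ is excluded from the statement precisely because a general line on $V_5$ has normal bundle $\OO(1)\oplus\OO(-1)$. In case~(ii), the classification of index-$1$ Fano threefolds yields the analogous statement for a covering family of conics whenever $12\le -K_Y^3\le 18$. In both cases, the argument from Theorem~\ref{thm:map} via \cite[Lemme 2.2.3]{Ame04} and adjunction using $K_X\sim 0$ shows that for general $s$ the preimage component $\widetilde C_s\subseteq f^{-1}(C_s)$ is a smooth elliptic curve with trivial normal bundle, giving the desired covering family $\widetilde{\mathcal C}$ on $X$.

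Consider the $\widetilde{\mathcal C}$-quotient $\varphi\colon X\dashrightarrow Z$. Through a general $y\in Y$ pass several distinct curves of $\mathcal C$ (six lines on a cubic threefold, and similarly in the other cases), so through $x\in f^{-1}(y)$ pass several members of $\widetilde{\mathcal C}$; they all meet at $x$ and hence lie in a common chain-equivalence class, forcing its dimension to be at least two and $\dim Z\le 1$. Suppose toward contradiction that $\dim Z=1$, let $F$ be a general fiber of $\varphi$, and set $S:=f(F)\subseteq Y$, a surface since $f$ is finite. For a general $x'\in F$ with $y'=f(x')\in S$ and any $C\in\mathcal C$ through $y'$, the component of $f^{-1}(C)$ through $x'$ meets $F$ at $x'$ and therefore lies in $F$ by closedness of chain classes, so $C\subseteq S$. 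Hence on a dense open subset $\Sigma\subseteq S$, every curve of $\mathcal C$ through a point of $\Sigma$ lies in $S$. Now use the $\mathcal C$-chain connectedness of $Y$ from Lemma~\ref{lem:triv}: given a general $y_1\in Y$ and $y_0\in\Sigma$, a chain $C_1,\dots,C_k$ of curves of $\mathcal C$ joining $y_0$ and $y_1$ satisfies $C_1\subseteq S$, and iterating at general incidence points $C_i\cap C_{i+1}\cap\Sigma$ we get $C_k\subseteq S$, so $y_1\in S$. This forces $S=Y$ and contradicts $\dim S<\dim Y$; so $\dim Z=0$ and $X$ is $\widetilde{\mathcal C}$-chain connected.

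The main obstacle is ensuring that the component of $f^{-1}(C)$ through $x'$ really belongs to the chosen irreducible family $\widetilde{\mathcal C}$: if $f^{-1}(C)$ is generically reducible, different preimage components yield distinct irreducible families of elliptic curves, and one must choose $\widetilde{\mathcal C}$ carefully---either by verifying transitivity of the monodromy of $f$ along general curves of $\mathcal C$ (so that $f^{-1}(C)$ is generically irreducible), or by running the argument in parallel on the finitely many families of preimage components and merging the resulting chain classes via the common intersection points in each fiber $f^{-1}(y)$. Both resolutions rely on the specific projective geometry of the listed Fanos.
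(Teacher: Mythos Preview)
Your approach differs substantially from the paper's, and the obstacle you flag at the end is genuine and left unresolved, so the proof is incomplete as it stands.

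The paper argues as follows. With the covering family $\widetilde{\mathcal C}$ in hand, let $g\colon X\dashrightarrow Z$ be its quotient. If $\dim Z=2$, then $Z$ is a rational surface by Theorem~\ref{thm1-1}; moreover the construction yields a dominant rational map $Z\dashrightarrow \mathcal H$ to the $2$-dimensional Hilbert scheme $\mathcal H$ of lines (case~(i)) or conics (case~(ii)) on $Y$, forcing $\mathcal H$ to be rational. This contradicts the classification of $\mathcal H$ in \cite{KPS16}, and it is precisely here that the specific list of Fanos enters: $V_5$ and $A_{22}$ are excluded because there $\mathcal H\simeq\PS^2$ (see the remark following the theorem), not because of a change in the normal bundle of a general line. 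If $\dim Z=1$, then $Z\simeq\PS^1$, the map $g$ is holomorphic, and a general fibre $S$ is a K3 surface (the abelian alternative is ruled out since $S$ is elliptically chain connected). The curves $C_t\subseteq S$ have trivial normal bundle, hence are all linearly equivalent, so $|C_t|$ is basepoint free and defines an elliptic fibration $S\to\PS^1$; but then the family cannot chain-connect $S$.

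In your argument, both the claim that several members of $\widetilde{\mathcal C}$ pass through a general $x\in X$ (used to force $\dim Z\le 1$) and the claim that the component of $f^{-1}(C)$ through $x'$ lies in $F$ (used to rule out $\dim Z=1$) require that \emph{every} irreducible component of $f^{-1}(C)$, for general $C\in\mathcal C$, belongs to the single irreducible family $\widetilde{\mathcal C}$ you fixed. You correctly identify this as the main obstacle and do not resolve it; without it neither step goes through. Independently, even granting this, your ``iteration at general incidence points $C_i\cap C_{i+1}\cap\Sigma$'' is not valid as written: once a chain in $Y$ joining $y_0$ to $y_1$ is fixed, its incidence points are specific, and there is no reason they lie in the open set $\Sigma\subseteq S$. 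The paper's proof sidesteps both issues entirely: it works with a single irreducible family of preimage components and never needs multiple members of $\widetilde{\mathcal C}$ through a given point of $X$.
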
 

\begin{proof} 
As in Theorem \ref{thm:map}, we obtain a covering family $\mathcal C$ of elliptic curves and consider its quotient $g\colon  X \dasharrow Z$. Assume first that $\dim Z = 2$, so that $Z$ is a rational surface. Let $\mathcal H$ be the $2$-dimensional Hilbert scheme of lines on $Y$ (in the case (i)) respectively of conics (in the case (ii)). By construction, we obtain a rational dominant map $h\colon Z \dasharrow \mathcal H$, hence $\mathcal H$ is rational. This contradicts the classification of $\mathcal H$, see  \cite{KPS16}.

So it remains to rule out $\dim Z = 1$, i.e.\ $Z \simeq \PS^1$. Here the map $g$ is even holomorphic. By construction of the covering family $\mathcal C$, the general $C_t$ is smooth with trivial normal bundle. Consider a general smooth fibre $S$ of $g$: then $S$ is either a K3 surface or an abelian surface. The second alternative is excluded since  $S$ is elliptically chain connected. Thus, $S$ is a K3 surface which is chain connected by a family of elliptic curves whose general member $C_t$ is smooth with the trivial normal bundle in $S$. Since all $C_t$ are linearly equivalent, the linear system $|C_t|$ is basepoint free and defines an elliptic fibration $S \to \PS^1$. But then the family cannot be chain connecting in $S$. 
\end{proof} 

\begin{rem} 
In the context of the proof of the previous result, it is very likely that the irreducible components of the Hilbert scheme $\mathcal H$ are not rational in all other cases of Fano threefolds of index at most $2$ and Picard number $1$, with two exceptions. If $Y$ has index $2$ 
and $H^3 = 5$ (so that $Y\simeq V_5$ in the classical notation), or if $Y$ has index $1$ and ${-}K_Y^3 = 22 $ (so that $Y\simeq A_{22}$), then $\mathcal H \simeq \PS^2$, and we cannot conclude that $X$ is elliptically
chain connected.
\end{rem} 

\section{Elliptically chain connected varieties}\label{sec:ellchainconn}

In this section we study the structure of elliptically chain connected manifolds which are not uniruled. First observe:

\begin{rem} 
Any K3 surface $X$ is elliptically chain connected. Indeed, by \cite[Corollary 13.2.2]{Huy16}, $X$ contains a family of singular elliptic curves $C_t$ such that $C_t^2 > 0$. Therefore, the quotient of $X$ with respect to the family cannot be a fibration to a curve, hence the family must be chain connecting. 
\end{rem} 

On the other hand, it is clear that abelian varieties of dimension at least $2$ cannot be elliptically chain connected.

\begin{pro} \label{prop:etale} 
Let $X$ be a normal $\Q$-Gorenstein projective variety and let $\mathcal C = (C_t)_{t\in T} $ be a covering family of elliptic curves such that $X$ is smooth along the curve $C_t$ for general $t\in T$. Then:
\begin{enumerate} 
\item[(i)] $K_X \cdot C_t \leq 0$ for all $t$;
\item[(ii)] if $K_X$ is pseudoeffective, then the normalisation morphism of a general curve $C_t$ is \'etale and $K_X \cdot C_t = 0$ for all $t$;
\item[(iii)] if $K_X$ is pseudoeffective and if for a general $t\in T$ the curve $C_t$ is smooth, then its normal bundle $N_{C_t/X}$ is trivial.
\end{enumerate} 
\end{pro}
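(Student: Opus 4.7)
The plan is to reduce to a smooth model of the family, and then convert the covering hypothesis into a positivity statement for the normal sheaf $\mathcal N_{\nu_t}$ of the normalisation morphism $\nu_t\colon\widetilde C_t\to C_t\hookrightarrow X$. After normalising $\mathcal C$ and resolving the singularities of the total space and, if necessary, of the base, I may assume I have a smooth projective variety $\widetilde{\mathcal C}$ with morphisms $q\colon\widetilde{\mathcal C}\to T$ and $p\colon\widetilde{\mathcal C}\to X$ such that $p$ is surjective, a general fibre $\widetilde C_t$ of $q$ is a smooth elliptic curve, and $p|_{\widetilde C_t}=\nu_t$. Over the smooth locus of $q$, combining the relative tangent sequence $0\to T_{\widetilde{\mathcal C}/T}\to T_{\widetilde{\mathcal C}}\to q^*T_T\to 0$ with the differential $dp\colon T_{\widetilde{\mathcal C}}\to p^*T_X$ (generically surjective since $p$ is dominant), restricting to a general $\widetilde C_t$ and taking the cokernel of $T_{\widetilde C_t}$ on both sides, I obtain a sheaf homomorphism
\[
\OO_{\widetilde C_t}^{\oplus\dim T}\longrightarrow\mathcal N_{\nu_t}
\]
which is surjective at the generic point of $\widetilde C_t$.

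For (i), the tangent sequence $0\to T_{\widetilde C_t}\to\nu_t^*T_X\to\mathcal N_{\nu_t}\to 0$ together with $T_{\widetilde C_t}\simeq\OO_{\widetilde C_t}$ gives $-K_X\cdot C_t=\deg\nu_t^*T_X=\deg\mathcal N_{\nu_t}$, using that $\nu_t$ has degree $1$ onto $C_t$. Writing $\mathcal N_{\nu_t}$ as an extension of a locally free quotient $F$ by its torsion subsheaf $\tau$, the composition $\OO^{\oplus\dim T}\to\mathcal N_{\nu_t}\twoheadrightarrow F$ is generically surjective, and with $F$ torsion-free on a smooth curve it is actually surjective. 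Hence $F$ is globally generated, $\deg F\ge 0$, and $\deg\mathcal N_{\nu_t}=\deg F+\operatorname{length}(\tau)\ge 0$. This proves $K_X\cdot C_t\le 0$ for general $t$, and hence for all $t$ since the cycles $C_t$ are algebraically equivalent.

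For (ii), assuming $K_X$ pseudoeffective, I would pass to a resolution $\pi\colon X'\to X$: the strict transform of $\mathcal C$ is a covering family on the smooth variety $X'$, so its general member $C_t'$ is a movable curve, and BDPP applied to the pseudoeffective class $\pi^*K_X$ gives $\pi^*K_X\cdot C_t'\ge 0$. As a general $C_t$ avoids $\Sing X$, projection formula yields $K_X\cdot C_t=\pi^*K_X\cdot C_t'\ge 0$; combined with (i), $K_X\cdot C_t=0$ for all $t$. Then $\deg\mathcal N_{\nu_t}=0$ forces $\deg F=0$ and $\operatorname{length}(\tau)=0$, so $\mathcal N_{\nu_t}$ is torsion-free, i.e.\ $\nu_t$ is unramified and the normalisation morphism is \'etale.

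For (iii), smoothness of $C_t$ makes $\nu_t$ an isomorphism and identifies $\mathcal N_{\nu_t}$ with $N_{C_t/X}$, which by the above is a globally generated vector bundle of rank $\dim X-1$ and degree $0$ on the elliptic curve $\widetilde C_t$. By Atiyah's classification, any indecomposable summand has the form $L\otimes F_r$ with $\deg L=0$; global generation rules out $L\not\simeq\OO$ (no sections exist) and $r\ge 2$ (the unique section of $F_r$ cannot generate an $r$-dimensional fibre). Hence $N_{C_t/X}\simeq\OO_{\widetilde C_t}^{\oplus(\dim X-1)}$. The main delicate step I anticipate is the setup: correctly desingularising the family and extracting the generically surjective map $\OO^{\oplus\dim T}\to\mathcal N_{\nu_t}$ from the dominance of $p$; once this positivity is in hand, the three assertions reduce to degree bookkeeping on the elliptic curve $\widetilde C_t$.
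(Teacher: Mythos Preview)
Your argument is correct and proceeds along a route that is closely related to, but not identical with, the paper's. The paper works on the cotangent side: after normalising the graph (without resolving), it uses the ramification formula $K_{\widetilde{\mathcal C}}\sim\widetilde p^*K_X+E$ with $E$ effective near a general fibre $\widetilde C_t$ and the adjunction $K_{\widetilde{\mathcal C}}\cdot\widetilde C_t=0$ to obtain (i); for (ii) it restricts the cotangent sequence $0\to\widetilde p^*\Omega^1_X\to\Omega^1_{\widetilde{\mathcal C}}\to\Omega^1_{\widetilde{\mathcal C}/X}\to 0$ to $\widetilde C_t$, notes that $\Omega^1_{\widetilde{\mathcal C}}|_{\widetilde C_t}\simeq\OO^{\dim X}$, and a determinant comparison forces the first map to be an isomorphism, whence $\widetilde p$ is \'etale near $\widetilde C_t$. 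You instead package everything on the tangent side via the normal sheaf $\mathcal N_{\nu_t}$ and the single generically surjective map $\OO^{\oplus\dim T}\to\mathcal N_{\nu_t}$, which gives a unified framework for all three parts.

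Two places where the paper is more economical: for the inequality $K_X\cdot C_t\geq 0$ in (ii), there is no need to invoke BDPP and pass to a resolution---since $(C_t)$ is a covering family, a general member meets every effective divisor non-negatively, and a limit argument handles pseudoeffective classes. For (iii), the paper simply says that a generically globally generated bundle with numerically trivial determinant on a curve is trivial, avoiding the Atiyah classification; your route via Atiyah is fine but you are tacitly using that global generation plus $\deg=0$ forces semistability (hence every indecomposable summand has degree $0$) before the summand-by-summand analysis applies. What your approach buys is a single positivity statement ($\deg\mathcal N_{\nu_t}\geq 0$ with equality forcing torsion-freeness) from which all three conclusions are read off; the paper's approach buys slightly lighter machinery at each step.
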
 

\begin{proof} 
Possibly passing to a subfamily, we may assume that $$\dim T =  \dim X - 1.$$ 
If $\widetilde T$ and $\widetilde{\mathcal C}$ are the normalisation of $T$, respectively the normalised graph of $\mathcal C$, consider the projections $\widetilde p\colon \widetilde  {\mathcal C} \to X$ and  $\widetilde q\colon \widetilde  {\mathcal C} \to \widetilde T$. If $t \in \widetilde T$ is a general point, then $\widetilde C_t:=\widetilde q^{-1}(t)$ is a smooth elliptic curve which does not meet the singular locus of $\widetilde{\mathcal C}$ (notice that the singular locus of $\widetilde {\mathcal C}$ is of codimension at least $2$ and therefore disjoint from the general $\widetilde C_t$).  
We may write near $\widetilde C_t$:
\begin{equation}\label{eq:11}
K_{\widetilde{\mathcal C}} \sim \widetilde p^*K_X + E,
\end{equation}
where $E$ is an effective Weil divisor. Since $K_{\widetilde{\mathcal C}} \cdot \widetilde C_t = 0$ by the adjunction formula, the relation \eqref{eq:11} gives $K_X\cdot C_t=-E\cdot\widetilde C_t\leq0$, which proves (i) for general $t$, hence for all $t$ by \cite[Proposition I.3.12]{Kol96}. 

\medskip

For (ii), the assumption that $K_X$ is pseudoeffective and (i) immediately give
\begin{equation}\label{eq:22}
K_X \cdot C_t=0 \quad\text{for all }t.
\end{equation}
Note that we have the canonical sequence of cotangent sheaves outside of the singular locus of $\widetilde{\mathcal C}$: 
$$ 0 \to \widetilde p^*\Omega^1_X \to \Omega^1_{\widetilde {\mathcal C}} \to \Omega^1_{\widetilde {\mathcal C}/X} \to 0.$$
Thus  $(\widetilde p^*\Omega^1_X)|_{\widetilde C_t}$ is a subsheaf of $\Omega^1_{\widetilde {\mathcal C}}|_{\widetilde C_t} \simeq \OO_{\widetilde C_t}^{\dim X}$, and passing to determinants, we obtain an injective map $L\to \OO_{\widetilde C_t}$, where $L$ is numerically trivial on $\widetilde C_t$ by \eqref{eq:22}. Therefore, this last map is an isomorphism, and hence 
$$(\widetilde p^*\Omega^1_X)|_{\widetilde C_t} \simeq \Omega^1_{\widetilde {\mathcal C}}|_{\widetilde C_t}.$$
From the cotangent sequence above, this yields that $\widetilde p$ is \'etale in a neighbourhood of $\widetilde C_t$, and the normalisation map $\widetilde C_t \to C_t$ is \'etale as well.

\medskip

Suppose finally that the general curve $C_t$ is smooth. Since the deformations of $C_t$ cover $X$, we see -- passing to the graph -- that its normal bundle $N_{C_t/X}$ is generically globally generated. By adjunction, $\det N_{C_t/X}$ is numerically trivial by (ii), hence $N_{C_t/X}$ is trivial, which shows (iii).
\end{proof} 

\begin{cor} \label{cor:can}
Let $X$ be a normal projective variety with canonical singularities and let $\mathcal C = (C_t)_{t\in T} $ be a covering family of elliptic curves. Then 
\begin{enumerate} 
\item[(i)] $K_X \cdot C_t \leq 0$ for all $t$;
\item[(ii)] if $K_X$ is pseudoeffective, then $K_X \cdot C_t = 0$ for all $t$;
\item[(iii)] if $K_X$ is pseudoeffective and if $X$ has terminal singularities, then the general curve $C_t$ does not meet the singular locus of $X$.
\end{enumerate} 
\end{cor}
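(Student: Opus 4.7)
The plan is to reduce Corollary \ref{cor:can} to Proposition \ref{prop:etale} by passing to a resolution of singularities and lifting $\mathcal C$ via strict transforms. Let $\pi\colon\widetilde X\to X$ be a log resolution; the discrepancy formula reads
$$K_{\widetilde X}\sim_{\Q}\pi^*K_X+\sum a_iE_i,$$
where the $E_i$ are $\pi$-exceptional prime divisors. Canonicity forces $a_i\geq 0$, while the terminal hypothesis in (iii) gives $a_i>0$. For general $t\in T$ the curve $C_t$ is not contained in $\Sing X$ (which has codimension $\geq 2$), so its strict transform $\widetilde C_t\subseteq\widetilde X$ is well-defined, and the strict transforms assemble into a covering family $\widetilde{\mathcal C}=(\widetilde C_t)$ of the smooth variety $\widetilde X$ whose general member is birational to $C_t$, hence has elliptic normalisation. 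Thus Proposition \ref{prop:etale} is applicable to $(\widetilde X,\widetilde{\mathcal C})$.

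For (i) I would apply Proposition \ref{prop:etale}(i) on $\widetilde X$ to obtain $K_{\widetilde X}\cdot\widetilde C_t\leq 0$ for general $t$. The projection formula gives $\pi^*K_X\cdot\widetilde C_t=K_X\cdot C_t$ since $\widetilde C_t\to C_t$ is birational, and $E_i\cdot\widetilde C_t\geq 0$ because $\widetilde C_t\not\subseteq E_i$. Substituting into the discrepancy formula,
$$K_X\cdot C_t=K_{\widetilde X}\cdot\widetilde C_t-\sum a_i(E_i\cdot\widetilde C_t)\leq 0$$
for general $t$, and this extends to all $t$ by the semicontinuity argument at the end of Proposition \ref{prop:etale}(i).

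For (ii), note that $K_{\widetilde X}$ is pseudoeffective as the sum of the pseudoeffective $\pi^*K_X$ and the effective $\sum a_iE_i$, so Proposition \ref{prop:etale}(ii) yields $K_{\widetilde X}\cdot\widetilde C_t=0$. The displayed equation above then becomes $K_X\cdot C_t=-\sum a_i(E_i\cdot\widetilde C_t)\leq 0$. Conversely, $\pi^*K_X$ is pseudoeffective on the smooth $\widetilde X$ and $\widetilde C_t$ lies in a covering family, so BDPP duality gives $\pi^*K_X\cdot\widetilde C_t\geq 0$, i.e.\ $K_X\cdot C_t\geq 0$. Combining, $K_X\cdot C_t=0$ and $\sum a_i(E_i\cdot\widetilde C_t)=0$ for general $t$. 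For (iii) the terminal hypothesis forces $a_i>0$ and each $E_i\cdot\widetilde C_t\geq 0$, so this vanishing implies $E_i\cdot\widetilde C_t=0$ for every $i$; hence $\widetilde C_t$ is disjoint from $\Exc(\pi)$, which maps onto $\Sing X$, and therefore the general $C_t$ is contained in $X_{\reg}$.

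The step I would treat most carefully is the initial lifting: verifying that strict transforms of general members of $\mathcal C$ indeed assemble into a covering family on $\widetilde X$ of the right type (elliptic normalisation, general member not contained in any $E_i$, birational to the original curve), so that Proposition \ref{prop:etale} is legitimately applicable. Once that bookkeeping is in place, all three assertions follow from routine manipulation of the discrepancy equation together with the standard BDPP positivity of pseudoeffective divisors against covering families.
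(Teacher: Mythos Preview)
Your proof is correct and follows essentially the same approach as the paper: lift the family via strict transforms to a resolution $\widetilde X$, apply Proposition~\ref{prop:etale} there, and transfer the inequalities back via the discrepancy formula $K_{\widetilde X}\sim_\Q\pi^*K_X+\sum a_iE_i$. The only cosmetic differences are your specification of a log resolution (so $\Exc(\pi)$ is purely divisorial, making the conclusion of (iii) slightly cleaner) and your appeal to BDPP in (ii), where the more elementary fact that a pseudoeffective $\Q$-Cartier divisor pairs non-negatively with a general member of a covering family already suffices.
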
 

\begin{proof} 
Let $\pi\colon X' \to X$ be a resolution of singularities. Then $X'$ carries a covering family $\mathcal C' = (C_t')_{t\in {T'}} $ of elliptic curves such 
that for general $t\in T'$, the curve $C_t'$ is the strict transform of $C_t$ in $X'$.
We may write 
$$ K_{X'} \sim_\Q \pi^*K_X + E, $$
where $E$ is an effective divisor. By Proposition \ref{prop:etale}(i) we have $K_{X'} \cdot C_t' \leq 0$, hence for a general $t\in T$ we have $K_X \cdot C_t = K_{X'} \cdot C_t' - E \cdot C_t\leq 0$, which shows (i), and (ii) follows immediately. 

For (iii), we have $K_{X'}\cdot C_t'=K_X \cdot C_t = 0$, and hence $E \cdot C_t = 0$. Therefore, a general curve $C_t$ cannot meet the singular locus of $X$.
\end{proof} 

\begin{thm} \label{thm:ecc1a} 
Let $X$ be a normal projective variety with canonical singularities such that $K_X$ is pseudoeffective and such that there exists a chain connecting family of elliptic curves $(C_t)_{t\in T}$.  Assume that $X$ has a minimal model via a sequence of divisorial contractions. Then $\kappa(X,K_X) = 0$. 
\end{thm}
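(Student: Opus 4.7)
The plan is to run the MMP to reach a minimal model where Corollary \ref{cor:can} forces the canonical divisor to be numerically zero, and then to conclude via abundance for $K$-trivial klt varieties. By hypothesis there is a minimal model $\psi\colon X\dashrightarrow X'$ obtained as a sequence of divisorial contractions; since such contractions preserve canonicity (discrepancies only increase along the MMP), $X'$ is a normal projective variety with canonical singularities and nef canonical divisor $K_{X'}$.

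Next I will transport the chain connecting family of elliptic curves from $X$ to $X'$. A general member $C_t$ cannot lie in any exceptional divisor contracted during the MMP, since each such divisor is a proper subvariety at the corresponding stage and the family covers. Its image $\psi_*C_t$ is therefore a curve in $X'$ birational to $C_t$, whose normalisation is elliptic. The resulting family $(\psi_*C_t)_{t\in T}$ remains covering and chain connecting: two very general points of $X'$ lift to very general points of $X$ (avoiding the images of the successive exceptional loci), and a chain in $X$ joining such lifts maps to a chain of curves in $X'$ joining the originals.

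With $X'$ and its elliptic family in hand, Corollary \ref{cor:can}(ii) yields $K_{X'}\cdot\psi_*C_t=0$ for all $t$. Since the family is chain connecting on $X'$, Remark \ref{rem2}(2) places the class $[\psi_*C_t]$ in the interior of $\NEb(X')$. A standard perturbation argument then forces $K_{X'}\equiv 0$: for any divisor class $D$ on $X'$ and sufficiently small $\varepsilon>0$, both $[\psi_*C_t]\pm\varepsilon D$ lie in $\NEb(X')$, and pairing with the nef class $K_{X'}$ gives $\pm K_{X'}\cdot D\geq 0$, hence $K_{X'}\cdot D=0$.

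Finally, since $K_{X'}\equiv 0$ on a klt variety, abundance in this setting (Kawamata--Nakayama) gives $K_{X'}\sim_\Q 0$ and therefore $\kappa(X',K_{X'})=0$. Because each step of $\psi$ is a divisorial contraction of a canonical variety with non-negative discrepancy along its exceptional divisor, the pushforward $\phi_{i*}\OO_{X_i}(mK_{X_i})=\OO_{X_{i+1}}(mK_{X_{i+1}})$ holds for every $m$, so pluricanonical sections transfer freely, giving $\kappa(X,K_X)=\kappa(X',K_{X'})=0$. The most delicate point is verifying preservation of chain connectedness at each individual MMP step, and in particular that very general points of the successive varieties really do lift to very general points of $X$; everything else is a direct application of results already assembled in the paper, plus the standard abundance statement for $K$-numerically trivial klt varieties.
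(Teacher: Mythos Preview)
Your proof is correct and follows the same overall arc as the paper's: transport the chain connecting elliptic family to the minimal model $X'$, deduce $K_{X'}\cdot C'_t=0$, invoke \cite[Theorem 2.6]{BCE+} (your Remark \ref{rem2}(2)) to get $K_{X'}\equiv 0$, and finish with Kawamata's abundance for numerically trivial canonical class.

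The one genuine difference is in how you obtain $K_{X'}\cdot C'_t=0$. The paper argues inductively through the sequence $\varphi_i\colon X_i\to X_{i+1}$: from $K_{X_i}=\varphi_i^*K_{X_{i+1}}+\lambda_iE_i$ with $\lambda_i>0$, together with $K_{X_i}\cdot C_t^i=0$ and $\varphi_i^*K_{X_{i+1}}\cdot C_t^i\geq 0$, it extracts $E_i\cdot C_t^i=0$, hence $C_t^i\cap E_i=\emptyset$ for general $t$, and so passes both $K\cdot C=0$ and the family to $X_{i+1}$. You instead push the family forward in one stroke by the composite birational morphism $\psi$ (which is legitimate: $\psi|_{C_t}$ is generically injective, hence birational onto its image, so the normalisation of $\psi_*C_t$ is still elliptic) and then simply re-apply Corollary \ref{cor:can}(ii) on $X'$, using that $X'$ has canonical singularities and $K_{X'}$ is nef. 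This is a clean shortcut; the paper's route yields the extra information that the general curve avoids every exceptional divisor, but that is not needed for the conclusion. Your caveat about lifting very general points through $\psi$ is well placed and is easily handled since $\psi$ is a birational \emph{morphism}: the non-isomorphism locus and the images of the countably many bad loci on $X$ are each contained in proper closed subsets of $X'$.
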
 

\begin{proof} 
Let $\varphi_i\colon X_i \to X_{i+1}$ be a sequence of divisorial contractions with $X_0=X$ such that $X_m$ is a minimal model of $X$ for some $m$, as in the statement of the theorem. We claim that for each $i\geq0$ there is a chain connecting family $(C_t^i)_{t\in T_i}$ of elliptic curves on $X_i$ such that $K_{X_i} \cdot C_t^i = 0$.

This immediately implies the theorem: indeed, then there exists a chain connecting family of curves $(C_t^m)_{t\in T_m}$ with $K_{X_m}\cdot C_t^m=0$. Then $K_{X_m} \equiv 0 $ by \cite[Theorem 2.6]{BCE+}, hence $K_{X_m}\sim_\Q0$ by \cite[Theorem 8.2]{Kaw85b}, and we conclude.
 
\medskip

It remains to show the claim. The claim holds for $i=0$ by Corollary \ref{cor:can}. Assume it holds for some $i\geq0$, and set $C_t^{i+1}:=\varphi(C_t^i)$. Then we obtain a chain connecting family of elliptic curves in $X_{i+1}$. If $E_i$ is the exceptional prime divisor of $\varphi_i$, then there is a positive rational number $\lambda_i$ such that 
\begin{equation}\label{eq:33}
K_{X_i} \sim_\Q \varphi_i^*K_{X_{i+1}} + \lambda_i E_i. 
\end{equation}
Since $K_{X_{i+1}}$ is pseudoeffective, we have $\varphi_i^*K_{X_{i+1}} \cdot C_t^i \geq 0 $ for a general $t\in T_i$, and we have $K_{X_i}\cdot C_t^i=0$ by induction. Then \eqref{eq:33} implies $E_i \cdot C_t^i \leq 0$, hence $ E_i \cdot C_t^i = 0$ since $C_t^i$ is not contained in $E_i$ for general $t\in T_i$. Therefore, $C_t^i\cap E_i=\emptyset$ and $K_{X_{i+1}}\cdot C_t^{i+1}=\varphi_i^*K_{X_{i+1}} \cdot C_t^i = 0$, which proves the claim.
\end{proof} 

\begin{rem} \label{rem1} 
We currently do not understand what happens if flips occur in the MMP. Let $X$ be as in the previous result. Let $\varphi\colon X\to Z$ be a flipping contraction and let $ \psi\colon X \dasharrow X^+$ be the associated flip. Let $ (p,q)\colon W \to X\times X^+$ be a resolution of indeterminacies of $\psi$. By \cite[Lemma 3.38]{KM98} there are $q$-exceptional divisors $E_i$ and non-negative rational numbers $a_i $ such that 
$$ p^*K_X = q^*K_{X^+} + \sum a_i E_i,$$
where $a_i>0$ if and only if $p(E_i)\subseteq\Exc(\varphi)$. Let $C_t'$ be the strict transform of $C_t$ in $W$, and let $C_t^+  = q(C_t')  \subseteq X^+$ be strict transform in $X^+$. Then
$$ q^*K_{X^+} \cdot C_t' \leq p^*K_X \cdot C_t',$$
with the strict inequality when $C_t\cap\Exc(\varphi)$ is non-empty and finite. On the other hand, since $K_X$ and $K_{X^+}$ are pseudoeffective, we have
$$K_X\cdot C_t= K_{X^+} \cdot C_t^+ = 0$$
by Corollary \ref{cor:can}(ii). Hence, we obtain the following alternative: either $C_t\cap\Exc(\varphi)=\emptyset$ or there is a component of $C_t$ which is contained in $\Exc(\varphi)$. However, it might possibly happen that the induced family $(C_t^+)_{t\in T^+}$ is no longer connecting. 
\end{rem} 

\begin{rem} \label{rem3}
If $X$ is elliptically chain connected with canonical singularities and $K_X$ is semiample, then $\kappa (X,K_X) = 0$. Indeed, let $\varphi\colon X \to Y$ be the associated Iitaka fibration, and consider a chain connecting family $\mathcal C = (C_t)$ of elliptic curves. Then each $C_t$ is contracted by $\varphi$ by Corollary \ref{cor:can}(ii), which can only happen if $Y$ is a point. 
\end{rem} 

\begin{rem} A weaker notion of elliptic chain connectedness would be to require that any two general points can be joined by a chain of elliptic and rational curves. Even then it is unclear whether we obtain a birationally invariant notion, except in dimension $3$ where the exceptional loci of flips are rational curves. 
\end{rem} 

\section{From elliptic curves to rational curves} \label{sec:elltorat}

Consider a smooth projective surface which is covered by elliptic curves, and assume that $X$ does not contain any rational curve. Then the classification of surfaces implies that $X$ is minimal, and it is either a torus, or hyperelliptic, or $\kappa (X,K_X) = 1$ and the Iitaka fibration is an almost smooth elliptic fibration, i.e.\ all singular fibres are multiples of elliptic curves. In particular, we have $c_2(X) = 0$. Conversely, none of these surfaces possesses a rational curve.
 
Here we study to which extent this picture generalises to higher dimensions. 

\begin{dfn} 
Let $X$ be a projective manifold and let $\varphi\colon X \to Y$ be a morphism with connected fibres to a normal projective variety. Then $\varphi$ is an \emph{almost smooth elliptic fibration} if all smooth fibres are elliptic curves and the underlying reduced scheme of any fibre is smooth. 
\end{dfn} 

Clearly the fibres of an almost smooth elliptic fibration do not contain rational curves. 

We begin with some preparations. Recall that a curve $C$ on a surface $S$ is called \emph{exceptional} if there is a birational map from $S$ to an analytic space which contracts $C$ to a point. 

\begin{lem} \label{lem:surface} 
Let $S$ be a normal projective surface and let $h\colon S \to B$ be a morphism with connected fibres to a smooth curve $B$. Assume that a general fibre of $h$ is an elliptic curve and that there are no rational curves in the fibres of $h$. Then $S$ is smooth and $h$ is an almost smooth elliptic fibration. Moreover, $h$ is isotrivial and $S$ does not contain any exceptional curve.
\end{lem}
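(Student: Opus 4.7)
The plan is to study the structure of $h$ via the minimal resolution and Kodaira's classification, thereby getting smoothness of $S$ and almost-smoothness of $h$; then isotriviality follows from the absence of singular reduced fibres, and finally the non-existence of exceptional curves is derived from the resulting structure of $S$ as a quotient of a product surface.

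Let $\pi\colon \widetilde S \to S$ be the minimal resolution of singularities and set $\widetilde h := h\circ \pi\colon \widetilde S \to B$. First I show $\widetilde h$ is a relatively minimal elliptic fibration: any $(-1)$-curve $E$ contained in a fibre of $\widetilde h$ is a smooth rational curve, and its image $\pi(E)$ is either a point, which forces $E$ to be $\pi$-exceptional and contradicts the minimality of $\pi$, or a rational curve in a fibre of $h$, which is excluded by hypothesis. Next, Kodaira's classification of singular fibres of a relatively minimal elliptic fibration shows that the only type without rational components is $mI_0$, a multiple of a smooth elliptic curve. If some fibre $F$ of $\widetilde h$ were of another type, each of its components would be rational and (by the same dichotomy) $\pi$-exceptional, so $\pi$ would contract all of $F$ to a point, contradicting $\dim \pi(F)=1$. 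Hence every fibre of $\widetilde h$ has type $mI_0$, no curve is contracted by $\pi$, so $\pi$ is an isomorphism, $S$ is smooth, and $h$ is almost smooth. Since every reduced fibre of $h$ is then a smooth elliptic curve, the $j$-invariant defines a morphism $j\colon B \to \A^1$, and as $B$ is projective, $j$ is constant, so $h$ is isotrivial.

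For the final assertion, let $C\subset S$ be an irreducible curve; by Grauert's criterion it suffices to show $C^2 \geq 0$. If $C$ is vertical, then $mC$ is a fibre of $h$ for some $m$, so $C^2 = \frac{1}{m^2}(mC)^2 = 0$. If $C$ is horizontal, then by isotriviality there exist a finite Galois cover $q\colon \widetilde B\to B$ with group $G$ and an elliptic curve $E_0$ with a $G$-action such that the normalisation of $S\times_B \widetilde B$ is isomorphic to $\widetilde B \times E_0$, yielding an identification $S\simeq (\widetilde B\times E_0)/G$ via a finite map $p\colon \widetilde B\times E_0\to S$ of degree $|G|$. On the product, translation in the $E_0$-factor is an algebraic equivalence, so any irreducible curve $D$ on $\widetilde B\times E_0$ satisfies $D^2\geq 0$: the fibres $\widetilde B\times\{e\}$ of the first projection have disjoint translates and zero self-intersection, and any other $D$ is moved to a distinct curve by a generic translation. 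Writing $p^*C = \sum e_i D_i$ with all $D_i$ horizontal on $\widetilde B \times E_0$, we have $D_i^2\geq 0$ and $D_i\cdot D_j\geq 0$ for $i\neq j$, hence by the projection formula
\[|G|\,C^2 = (p^*C)^2 \geq 0.\]

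The main obstacle will be making precise the identification $S \simeq (\widetilde B \times E_0)/G$: although isotriviality of the smooth locus of $h$ implies local triviality in the \'etale topology, one must account for the multiple fibres, so the cover $\widetilde B\to B$ is ramified at their images and the diagonal $G$-action on $\widetilde B \times E_0$ combines translations and origin-preserving automorphisms of $E_0$ in a way that both recovers the multiplicities of the fibres and produces a smooth quotient. By contrast, the first two steps are essentially formal consequences of the Kodaira classification and the projectivity of $B$.
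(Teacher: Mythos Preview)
Your argument is correct, and the first half (smoothness of $S$ and almost-smoothness of $h$ via the minimal resolution and Kodaira's list) matches the paper's proof essentially line for line, just with more detail filled in.

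The two proofs diverge at isotriviality. You observe that once every fibre is of type $mI_0$, the $j$-invariant has no poles and so defines a morphism $B \to \A^1$, hence is constant because $B$ is projective. This is quicker than what the paper does: the paper invokes the stable reduction theorem to produce a finite base change $B' \to B$ over which the pulled-back family becomes everywhere smooth, then cites local triviality and Beauville to get, after a further \'etale cover, a product $S' = B' \times E$.

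For the last assertion, however, both proofs land in the same place: one needs a finite surjection $p\colon \widetilde B \times E_0 \to S$ in order to run the translation argument. The paper already has this in hand as a byproduct of its stable-reduction proof of isotriviality, whereas you must now build it separately (and you correctly flag this as the main obstacle, requiring a ramified base change at the multiple fibres followed by trivialisation). So your $j$-invariant shortcut, while elegant, does not actually save you the stable-reduction / base-change machinery; it only postpones it. The paper's route is therefore slightly more economical overall, since one construction serves both purposes. Your final computation, showing $(p^*C)^2 \geq 0$ directly from $D_i^2 \geq 0$ and $D_i \cdot D_j \geq 0$ on the product, is arguably cleaner than the paper's version, which first argues that some component $C'$ of the preimage must have $(C')^2 < 0$.
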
 

\begin{proof} 
If $\pi\colon \widehat S \to S$ is the minimal resolution, then the induced fibration $\widehat h\colon \widehat S \to B$ is relatively minimal. Since all fibres of $\widehat h$ contain non-rational curves, Kodaira's classification of the fibres of $\widehat h$ implies that $\widehat h$ does not contract any rational curve. So $\pi$ is an isomorphism and all fibres of $h$ are multiples of smooth elliptic curves. 

Then $h$ is isotrivial: indeed, by the stable reduction theorem \cite{KKMS} there is a finite base change $B'\to B$ and an induced fibration $h'\colon S'\to B'$ such that all fibres of $h'$ are smooth. Then $h'$ is locally trivial by \cite[Theorems III.17.3 and III.18.2]{BHPV04}, hence by \cite[Proposition VI.8]{Bea96}, after an \'etale base change, we may assume that $S'=B'\times E$ and $h'$ is the first projection, where $E$ is an elliptic curve. 

Finally, if $C\subseteq S$ is an exceptional curve on $S$, then $C^2<0$, and hence there exists a component $C'$ of the preimage of $C$ in $S'$ with 
\begin{equation}\label{eq:negative}
(C')^2<0. 
\end{equation}
However, note that any translation on $E$ induces a translation of $C'$ in $S'$. Then either $C'$ is a fibre of $h'$, which contradicts \eqref{eq:negative}, or these translations cover $S'$. But \eqref{eq:negative} implies that $C'$ does not move in its linear system, a contradiction.
\end{proof}

\begin{lem} \label{lem:flat-elliptic-a} 
Let $X$ and $Y$ be smooth projective varieties and let $q\colon X \to Y$ be an equidimensional surjective morphism such that the reduction of any fibre is a smooth elliptic curve. Then $q_*\OO_X(K_{X/Y})$ is a torsion line bundle on $Y$.
\end{lem}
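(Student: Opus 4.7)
The strategy is to restrict to general complete-intersection curves, apply Lemma~\ref{lem:surface} to obtain a torsion statement of \emph{uniform} order on each slice, and then globalize via the Lefschetz-type injectivity of restriction maps on $\Pic^0$.

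\textbf{Setup.} The sheaf $L := q_{\ast}\omega_{X/Y}$ is a line bundle on $Y$: $q$ is flat by miracle flatness (equidimensional between smooth varieties), every fibre has arithmetic genus $1$, and $h^0(X_y, \omega_{X_y/y}) = 1$ for all $y\in Y$, so cohomology and base change applies. It remains to prove that $L$ is torsion.

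\textbf{Uniform torsion on curve slices.} For a general complete-intersection curve $C\subseteq Y$ cut out by very ample divisors, $X_C := q^{-1}(C)$ is a smooth surface (Bertini), and $q|_{X_C}\colon X_C\to C$ is an almost smooth elliptic fibration in the sense of Lemma~\ref{lem:surface}. That lemma gives isotriviality with common fibre a fixed elliptic curve $E$, and its proof shows that after a finite base change $\pi\colon C'\to C$ the family becomes the trivial one, $X_C\times_C C'\simeq C'\times E$ (after normalisation). The degree of $\pi$ divides an integer $n$ depending only on $q$: namely on the finitely many multiplicities $m_i$ of the multiple fibres of $q$ (bounding the stable reduction) and on $|\mathrm{Aut}(E)|\leq 24$ (bounding the monodromy trivialisation). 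Since $\pi^{\ast}(L|_C)$ is the Hodge bundle of the trivial family $C'\times E\to C'$, it is isomorphic to $\OO_{C'}$, so a base-change/norm computation gives $(L|_C)^{\otimes n}\simeq \OO_C$ with $n$ independent of the chosen slice $C$.

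\textbf{Globalization.} As $L^{\otimes n}$ restricts to $\OO_C$ on the general curve in a covering family, it is numerically trivial, so $L^{\otimes n}\in \Pic^{\tau}(Y)$. Setting $k := |\Pic^{\tau}(Y)/\Pic^{0}(Y)|$, which is finite, we have $L^{\otimes nk}\in \Pic^{0}(Y)$ and still $L^{\otimes nk}|_C\simeq \OO_C$. For a sufficiently ample complete-intersection curve $C\subseteq Y$, the restriction map $\Pic^{0}(Y)\to \Pic^{0}(C)$ is injective: this follows from the Lefschetz hyperplane theorem, iterated to give $H^1(Y,\OO_Y)\hookrightarrow H^1(C,\OO_C)$ (Kodaira--Akizuki--Nakano vanishing applied to the conormal sequences). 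Consequently $L^{\otimes nk}\simeq \OO_Y$, i.e.\ $L$ is torsion.

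\textbf{Main obstacle.} The critical step is Step 2: obtaining a torsion order $n$ that is \emph{uniform} in $C$. This rests on the finiteness of the set of multiplicities $\{m_i\}$ of multiple fibres of $q$, combined with the finiteness of $\mathrm{Aut}(E)$, and on a careful accounting of the base-change formula for the relative dualising sheaf under $\pi$ (so that the conclusion is trivialisation on the nose, and not merely numerical triviality). Once uniform torsion on a covering family of curves is in place, the passage from numerical triviality to actual torsion via Lefschetz is routine.
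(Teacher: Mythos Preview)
Your route is genuinely different from the paper's. You cut down to curves, aim for a \emph{uniform} torsion order on every slice, and then push the statement back up to $Y$ via Lefschetz injectivity of $\Pic^0(Y)\to\Pic^0(C)$. The paper instead cuts down by a single hyperplane $D\in|H|$ at a time and runs an induction on $\dim Y$: once one knows $\mathcal L\equiv 0$ (from a single curve slice and \cite[III.18.3]{BHPV04}), the restriction map $H^0(Y,\mathcal L^{\otimes m})\to H^0(D,\mathcal L^{\otimes m}|_D)$ is surjective by Serre duality and Kodaira vanishing, so the torsion section on $D$ supplied by the inductive hypothesis lifts to $Y$. The point is that the paper never needs a torsion order that is uniform in the slice; one hyperplane and one $m$ (depending on that hyperplane) are enough. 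This makes the argument short and avoids any fine analysis of the elliptic fibration.

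Your Step~2 carries a real gap as written. You assert that the trivializing base change $\pi\colon C'\to C$ furnished by Lemma~\ref{lem:surface} has degree dividing a fixed $n$, controlled by the multiplicities $m_i$ and by $|\mathrm{Aut}(E)|$. But the proof of Lemma~\ref{lem:surface} performs \emph{two} further reductions after stable reduction: it trivializes the $H^1$-monodromy (order dividing $|\mathrm{Aut}(E,0)|\le 6$), and then invokes \cite[VI.8]{Bea96} to kill the remaining $E$-torsor structure. That last step is where your bound breaks: an $E$-torsor over a curve can have arbitrarily large order in $H^1_{\text{\'et}}(C',E)$, so there is no a~priori reason for $\deg\pi$ to divide a fixed $n$. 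Consequently the norm computation $(L|_C)^{\otimes\deg\pi}\simeq\mathcal O_C$ does not by itself yield a slice-independent exponent. The statement you want is nonetheless true, and cheaply: translations act trivially on $H^0(E,\omega_E)$, so the Hodge bundle depends only on the variation of Hodge structure, not on the torsor; once the $H^1$-monodromy is trivial (after a cover of degree dividing $|\mathrm{Aut}(E,0)|$), the pulled-back Hodge bundle is already $\mathcal O$, and the local computation at an $m I_0$-fibre shows no correction terms appear. With this fix your globalization step goes through, but note how the paper's induction-plus-vanishing device bypasses the whole issue.
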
 

\begin{proof} 
First notice that 
$$ \mathcal L := q_*\OO_X(K_{X/Y}) $$
is locally free, and  of rank $1$: Indeed, $ q_*\OO_X(K_{X/Y}) $ is reflexive (by checking Hartogs-Riemann's extension theorem and using the equidimensionality of $q$), hence locally free, and it is of rank $1$ since a general fibre of $q$ is an elliptic curve.

Let $C\subseteq Y$ be a general complete intersection curve, and denote $X_C:=q^{-1}(C)$. Then
$$ \mathcal L|_C = q_*\OO_X(K_{X/Y})|_C = (q|_{X_C})_*(K_{X_C/C}). $$
Since $X_C$ is smooth, the line bundle $\mathcal L|_C$ is torsion by \cite[Theorem III.18.3]{BHPV04}. In particular,
\begin{equation}\label{eq:123}
\mathcal L \equiv 0.
\end{equation}

To show that $\mathcal L$ is torsion, we proceed by induction on $n=\dim Y$. The case $\dim Y = 1$ is already done. Let $H$ be a very ample divisor on $Y$ and $D \in | H |$ general. Using the induction hypothesis, we may choose a positive integer $m$ such that $H^0(D, \mathcal L^{\otimes m}|_D) \ne 0$. Now, as $\mathcal L^{\otimes -m} \otimes \OO_Y(D)$ is ample by \eqref{eq:123}, the restriction map
\begin{equation}\label{eq:124}
H^0(Y,\mathcal L^{\otimes m}) \to H^0(D, \mathcal L^{\otimes m}|_D)\quad\text{is surjective}
\end{equation}
since we have
$$ H^1\big(Y,\mathcal L^{\otimes m} \otimes \OO_Y(-D)\big)\simeq H^{n-1}\big(Y, \mathcal L^{\otimes -m} \otimes \OO_Y(K_Y+D)\big)  = 0$$
by the Serre duality and Kodaira vanishing. Thus $\mathcal L^{\otimes m} \simeq \OO_Y$ by \eqref{eq:124}. 
\end{proof} 

We need the following slight generalisation. 

\begin{lem} \label{lem:flat-elliptic} 
Let $X$ be a normal projective variety, let $Y$ be a projective manifold and let $q\colon X \to Y$ be an equidimensional surjective morphism. Assume furthermore:
\begin{enumerate} 
\item there exists a finite set $S \subseteq Y$ such that, if we set $Y_0:=Y \setminus S$ and $X_0:=X \setminus q^{-1}(S)$, the reduction of any fibre of the map $q|_{X_0}\colon X_0 \to Y_0$ is a smooth elliptic curve;
\item $X_0$ is smooth; 
\item $q$ does not contract any rational curve. 
\end{enumerate} 
Then $q_*\OO_X({K_{X/Y}})$ is a torsion line bundle on $Y$. 
\end{lem}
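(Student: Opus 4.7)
The plan is to reduce to Lemma \ref{lem:flat-elliptic-a} by handling the behaviour over $Y_0$ and over $S$ separately, the latter via reflexive-extension arguments, with the base case $\dim Y = 1$ covered by Lemma \ref{lem:surface}.

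First I would dispose of the case $\dim Y = 1$: here $X$ is a normal projective surface fibered over a smooth curve $Y$ with general fibre elliptic and, by hypothesis (3), no rational curves in any fibre, so Lemma \ref{lem:surface} shows that $X$ is smooth and $q$ is an almost smooth elliptic fibration, and Lemma \ref{lem:flat-elliptic-a} applies directly.

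Assume therefore $\dim Y \geq 2$. Then $S$ has codimension $\geq 2$ in $Y$ and, since $q$ is equidimensional with one-dimensional fibres, $q^{-1}(S) = X\setminus X_0$ has codimension $\geq 2$ in $X$. In particular $X$ is smooth in codimension one, and $\OO_X(K_{X/Y}) = \OO_X(K_X)\otimes q^*\omega_Y^{-1}$ is a reflexive sheaf of rank one on $X$. Set $\mathcal L := q_*\OO_X(K_{X/Y})$ and let $j\colon Y_0 \hookrightarrow Y$ be the open immersion. The crucial preliminary step is to show that $\mathcal L$ is a line bundle on $Y$ and that $\mathcal L = j_*(j^*\mathcal L)$. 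By miracle flatness the restriction $q|_{X_0}\colon X_0 \to Y_0$ is flat between smooth varieties, and since the reductions of its fibres are smooth elliptic curves the standard base-change argument from the proof of Lemma \ref{lem:flat-elliptic-a} shows that $j^*\mathcal L = (q|_{X_0})_*\omega_{X_0/Y_0}$ is invertible on $Y_0$; the identity $\mathcal L = j_*(j^*\mathcal L)$ then follows because sections of the reflexive sheaf $\OO_X(K_{X/Y})$ extend across the codimension-$\geq 2$ subset $q^{-1}(S) \subseteq X$. Being reflexive of rank one on the smooth variety $Y$, $\mathcal L$ is therefore a line bundle.

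The remainder of the proof is a direct translation of the argument in Lemma \ref{lem:flat-elliptic-a} via sections that avoid $S$. For a general complete intersection curve $C \subseteq Y$ one has $C \cap S = \emptyset$, and by Bertini applied to the base-point-free system $|q^*H|$ on $X_0$ the surface $X_C := q^{-1}(C) \subseteq X_0$ is smooth; then $X_C\to C$ is an almost smooth elliptic fibration, and \cite[Theorem III.18.3]{BHPV04} combined with flat base change gives $\mathcal L|_C$ torsion, so $\mathcal L \cdot C = 0$ and $\mathcal L$ is numerically trivial. Similarly, for a general $D \in |H|$ with $H$ very ample one has $D \cap S = \emptyset$ and $X_D := q^{-1}(D)$ smooth; here $X_D \to D$ already satisfies the hypotheses of Lemma \ref{lem:flat-elliptic-a}, so $\mathcal L|_D$ is torsion, and the Kodaira-Serre vanishing argument concluding the proof of Lemma \ref{lem:flat-elliptic-a} -- applied to the sheaf $\mathcal L^{\otimes -m}(D)$, which is ample by the numerical triviality of $\mathcal L$ -- transports this torsion to all of $Y$. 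The main obstacle I foresee is the preliminary step, which genuinely uses hypotheses (2) and (3); once it is in place, the rest is hyperplane-section bookkeeping.
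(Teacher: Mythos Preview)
Your proposal is correct and follows essentially the same route as the paper's proof: both establish that $\mathcal L$ is a line bundle via a reflexivity/Hartogs argument, then use that a general hyperplane section of $Y$ misses the finite set $S$ so that Lemma~\ref{lem:flat-elliptic-a} applies over it, and finish with the same Kodaira--Serre vanishing step. The paper compresses this into three lines (``as in the proof of Lemma~\ref{lem:flat-elliptic-a} \ldots\ let $H\subseteq Y$ be a general hyperplane section \ldots''), while you spell out the reflexive-extension argument for $\mathcal L=j_*j^*\mathcal L$ and separate the $\dim Y=1$ case explicitly via Lemma~\ref{lem:surface}; these are elaborations rather than a different strategy.
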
 

\begin{proof}
As in the proof of Lemma \ref{lem:flat-elliptic-a} the sheaf
$$ \mathcal L := q_*\OO_X(K_{X/Y}) $$
is locally free of rank 1. Let $H \subseteq Y$ be a general hyperplane section. Since $q^{-1}(H)$ is smooth, $\mathcal L\vert_H$ is a torsion line bundle by Lemma \ref{lem:flat-elliptic-a}, and hence $\mathcal L \equiv 0$. We argue as in the proof of Lemma \ref{lem:flat-elliptic-a} to conclude that $\mathcal L$ is torsion. 
\end{proof} 

The next proposition is the technical heart of Theorem \ref{thm:CYHK}.

\begin{pro} \label{new1} 
Let $X$ be a projective manifold of dimension $n$  with a covering  family $\mathcal C$ of elliptic curves and let $\widetilde {\mathcal C}$ be the normalisation of $\mathcal{C}$. If $X$ does not contain a uniruled divisor, then: 
\begin{enumerate}
\item the family $\mathcal C$ has no moduli;
\item the projection $\widetilde p\colon \widetilde {\mathcal C} \to X$ is finite;
\item there exists a normal variety $Y$ and a finite surjective morphism $f\colon Y \to X$ such that 
$$H^0(Y,f^*T_X) \neq 0.$$
\end{enumerate} 
\end{pro}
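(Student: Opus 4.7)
The strategy is to use the absence of uniruled divisors in $X$ in order to constrain both the variation of moduli in the family $\mathcal C$ and the behaviour of $\widetilde p$, and then to deduce (3) by exploiting the triviality of the tangent bundle of the fixed elliptic fibre after trivialising the isotrivial family via a finite base change. I would argue in the order (2), (1), (3), with (2) used crucially in the proof of (1).

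For (2), observe first that $\widetilde p$ is generically finite since $\dim \widetilde{\mathcal C} = \dim \widetilde T + 1 = n = \dim X$. Suppose for contradiction that there is an irreducible curve $B' \subset \widetilde{\mathcal C}$ contracted by $\widetilde p$. Its image $B := \widetilde q(B') \subset \widetilde T$ must itself be a curve: otherwise $B'$ would lie in a single fibre $\widetilde C_t$, on which $\widetilde p$ factors through the normalisation of $C_t \subset X$ and is therefore finite. Setting $S := \widetilde q^{-1}(B)$ and passing to a normalisation $\widetilde S$, we obtain an elliptic fibration $\widetilde S \to B$ together with a curve contracted under the induced morphism $\widetilde S \to X$. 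Lemma \ref{lem:surface} asserts that no such surface exists if all of its fibres lack rational components, so $\widetilde S$ must contain a rational curve in some fibre. As $B$ varies in the positive-dimensional subvariety $\widetilde q(F) \subset \widetilde T$ (with $F$ a positive-dimensional fibre component of $\widetilde p$), these rational curves assemble into a covering family on a subvariety of $X$, and a dimension count together with the no-uniruled-divisor hypothesis yields the desired contradiction.

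For (1), consider the rational map $j\colon \widetilde T \dashrightarrow \mathbb P^1$ sending $t$ to the $j$-invariant of $\widetilde C_t$, and assume it is non-constant. Since $\widetilde T$ is projective while $\mathcal M_{1,1} = \mathbb A^1$ is affine, $j$ must attain the cusp $\infty$ along a nonempty divisor $\Delta \subset \widetilde T$, over which $C_t$ is a singular elliptic curve, hence a union of rational curves. By (2), the finite morphism $\widetilde p$ sends the $(n-1)$-dimensional divisor $\widetilde q^{-1}(\Delta) \subset \widetilde{\mathcal C}$ onto a divisor in $X$, which is then swept out by the images of the rational components of the singular fibres; this divisor is uniruled, contradicting the hypothesis.

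For (3), the isotriviality of $\mathcal C$ from (1) provides a finite morphism $Y \to \widetilde T$ from a normal variety $Y$ such that the normalised fibre product $\widetilde{\mathcal C} \times_{\widetilde T} Y$ becomes isomorphic (at least birationally, after further shrinking and renormalising) to $Y \times E$, where $E$ is the fixed elliptic fibre. Composing with the finite $\widetilde p$ yields a finite surjective morphism $f\colon Y \times E \to X$. The translation-invariant vector field on $E$ induces a nowhere-vanishing global section of the relative tangent bundle $T_{(Y \times E)/Y} \simeq \operatorname{pr}_E^* T_E$, and since $f$ is finite the derivative map $T_{Y \times E} \to f^* T_X$ is injective at a generic point of each fibre of $\operatorname{pr}_Y$; pushing this section forward produces the desired non-zero element of $H^0(Y \times E, f^* T_X)$. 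The principal obstacle I foresee lies in step (2): ensuring that, as $B$ ranges over the positive-dimensional subvariety $\widetilde q(F) \subset \widetilde T$, the rational curves produced via Lemma \ref{lem:surface} genuinely sweep out a \emph{divisor} in $X$ rather than a subvariety of higher codimension, which will require a careful incidence/dimension argument, possibly combined with a bend-and-break style estimate.
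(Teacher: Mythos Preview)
Your ordering of (1) and (2) is reversed relative to the paper, and this is where the first difficulty appears. The paper proves (1) first: if the $j$-invariant is non-constant, the fibres over $j^{-1}(\infty)$ are rational and sweep out a uniruled divisor in $X$. It then uses (1) as input to (2), invoking Lemma \ref{lem:surface} on the surface $S=\widetilde q^{-1}(\Gamma')$ to conclude that $S$ carries no exceptional curve, contradicting the existence of the contracted $\Gamma$. Your route to (2) --- producing rational curves from the failure of the hypotheses of Lemma \ref{lem:surface} and then varying $B$ over $\widetilde q(F)$ to sweep out a divisor --- runs into exactly the obstacle you flag, and I do not see how to close it: nothing prevents the bad fibres from all lying over a fixed locus of codimension at least $2$ in $\widetilde T$, in which case you extract only finitely many rational curves in $X$.

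The more serious gap is in (3). Isotriviality guarantees that after a finite base change the family becomes \emph{birational} to a product $Y\times E$, but not isomorphic to one, because multiple fibres persist. Consequently the normalised pullback $Z:=(\widetilde{\mathcal C}\times_{\widetilde T} Y)^\nu$ need not carry a global relative vector field: already for an almost smooth elliptic surface with a fibre of multiplicity $m$ one has $T_{Z/Y}\simeq\omega_{Z/Y}^{-1}$ locally of the form $\mathcal O\bigl(-(m-1)F\bigr)$, which has no sections. Your map $Y\times E\to X$ is therefore only rational, not finite, and the translation field on $E$ does not transport to a section of $f^*T_X$. The paper devotes Steps 3--5, the bulk of the argument, to this point: it first shows (Lemma \ref{lem:flat-elliptic}) that $\tau_*\omega_{Z/W}$ is torsion, passes to an \'etale cover to trivialise it, then performs weak semistable reduction in the sense of Viehweg to reach a model $\tau'\colon Z'\to W'$ with $\tau'_*\omega_{Z'/W'}\simeq\mathcal O_{W'}$. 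Only then does the cotangent sequence yield a nonzero map $h'^*\Omega^1_X\to\omega_{Z'/W'}\simeq\mathcal O_{Z'}(D')$ with $D'$ effective and mapping to codimension at least $2$ in $X$; Stein-factorising $h'$ finally gives the finite $f\colon Y\to X$. Your phrase ``at least birationally, after further shrinking and renormalising'' is exactly where this work is hidden.
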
 

\begin{proof}  
\emph{Step 1.}
Let $q\colon \mathcal C \to T$ be the projection to the parameter space $T$.  Assume that the family of elliptic curves has moduli. Then the $j$-invariant yields a non-constant holomorphic map $j\colon T \to \PS^1$. Consequently, $q^{-1} \big(j^{-1}(\infty)\big)$ is a uniruled divisor in $ \widetilde {\mathcal C},$ projecting onto a uniruled divisor in $X$, see the proof of \cite[Corollary 3.34]{Voi03}. This is a contradiction which shows (1).
 
 \medskip
 
\emph{Step 2.} 
Suppose that $\widetilde p$ contracts a curve $\Gamma$. Let $\widetilde q\colon \widetilde { \mathcal  C} \to  \widetilde T $ denote the normalised projection, and set $\Gamma' := \widetilde q(\Gamma)$ and $S:=\widetilde q^{-1}(\Gamma')$. Since $\widetilde {\mathcal C}$ is the normalised graph, we have $\dim \widetilde p(S) = 2$, hence $\Gamma$ is a contractible curve in the surface $S$. Since the family $\mathcal C$ has no moduli by (1), Lemma \ref{lem:surface} applies to the Stein factorisation of the morphism $\widetilde{q}|_S\colon S\to \Gamma'$ and yields a contradiction. This shows (2). Note that we did not assume 
that $\dim T = n-1$; this is a consequence of our assumptions. 

\medskip

\emph{Step 3.} 
From now on we prove (3).  We will perform a weak semistable reduction to handle the multiple fibres of $\widetilde q$. To do that we need to pass to a suitable birational model as follows.

Let $\pi\colon W\to \widetilde T$ be a resolution of singularities and consider the base change: 
$$
\xymatrix{
Z \ar[d]_{\tau} \ar[r]^{\sigma} & {\widetilde {\mathcal C}}  \ar[d]^{\widetilde q}  \\
W \ar[r]_{\pi} & \widetilde T, &
}
$$
where $Z$ is the normalisation of the main component of $ \widetilde {\mathcal C} \times_{\widetilde T} W$. Notice that a general fibre of $q'$ is an elliptic curve and that there are no rational curves in the fibres of $q'$. 
 
Let $C \subseteq W$ be a general complete intersection curve. Then $Z_C := \tau^{-1}(C) $ is a normal surface, hence smooth by Lemma \ref{lem:surface}, and $\tau|_{Z_C}$ is almost smooth. Thus, $Z$ is smooth near $Z_C$ by Bertini's theorem. 

We claim that there exists a finite set $\Sigma \subseteq W$ such that, setting $W_0 = W \setminus \Sigma$ and $Z_0 = \tau^{-1}(W_0)$, the variety $Z_0$ is smooth, and the reduction of any fibre over $W_0$ is a smooth elliptic curve, i.e.\ $\tau|_{Z_0}$ is almost smooth by Lemma \ref{lem:surface}. Indeed, for each $w\in W$ let $Z_w$ be the scheme-theoretic fibre of $\tau$ over $w$. Set 
$$\Sigma:=\{w\in W\mid \Sing(Z)\cap Z_w\neq\emptyset\text{ and }(Z_w)_{\text{red}} \text{ is not smooth elliptic}\}.$$
If $\dim \Sigma\geq1$, then a general complete intersection curve $C$ cuts $\Sigma$. Since $Z_C$ is smooth and $\tau|_{Z_C}$ is almost smooth by above, this is a contradiction which proves the claim. 

By Lemma \ref{lem:flat-elliptic},  $\tau_*\OO_{Z}(K_{Z/W})$ is a torsion line bundle on $W$. After passing to a finite \'etale cover of $W$, we may assume that 
\begin{equation}\label{eq:125}
\tau_*\OO_{Z}(K_{Z/W}) \simeq \OO_{W}.
\end{equation}

\medskip

\emph{Step 4.}
We are now in a position to perform a weak semistable reduction as in \cite[Definition 6.7 and Lemma 6.8]{Vi95}. First of all, we choose a resolution $\widehat \pi \colon \widehat W \to W $ and a commutative diagram 
$$
\xymatrix{
\widehat Z  \ar[d]_{\widehat \tau} \ar[r]^{\widehat \sigma} & {Z}  \ar[d]^{\tau}  \\
\widehat W \ar[r]_{\widehat \pi} & W, &
}
$$
where $\widehat Z$ is a resolution of the main component $Z \times_{W} \widehat W$, such that there exists a divisor $\widehat \Sigma$ with simple normal crossings in $\widehat W$ with the property that $\widehat \tau^{-1}\big(\widehat \Sigma\big)$ is a divisor with simple normal crossings in $\widehat W$ and that $\widehat \tau|_{\widehat{Z}\setminus\widehat\tau^{-1}(\widehat{\Sigma})}$ is smooth. Let  $\widehat E_i$ denote the $\widehat \pi$-exceptional divisors. By \eqref{eq:125}, over $\widehat W \setminus \bigcup_i \widehat E_i$ we have an isomorphism
$$\widehat \tau_*\OO_{\widehat Z}\big(K_{\widehat Z / \widehat W}\big) \to \widehat \pi^* \tau_* \OO_Z(K_{Z/W}) \simeq \OO_{\widehat W}.$$
Thus
\begin{equation} \label{eq:int}  
\textstyle \widehat \tau_*\OO_{\widehat Z}\big(K_{\widehat Z/ \widehat W}\big) = \OO_{\widehat W}\big(\sum a_i \widehat E_i\big) 
\end{equation} 
with $a_i \in \Z$. 

By \cite[Lemma 6.8]{Vi95} there exist a commutative diagram 
$$
\xymatrix{
Z' \ar[d]_{\tau'} \ar[r]^{\sigma'} & {\widehat Z}  \ar[d]^{\widehat \tau}  \\
W' \ar[r]_{\pi'} & \widehat W &
}
$$
and a closed subset $\Pi\subseteq W'$ of codimension at least $2$ such that $W'$ is smooth, $Z'$ is normal, $\pi'$ is finite, $\sigma' $ is generically finite and the fibres of $\tau'$ are smooth elliptic curves outside $\Pi$. Moreover, by \cite[Lemma 6.9]{Vi95} the morphism $\tau'$ is flat over $W' \setminus \Pi$ and the line bundle $\tau'_*\OO_{Z'}(K_{Z'/W'})$ satisfies 
\begin{equation}\label{eq:127}
\textstyle  \tau'_*\OO_{Z'}(K_{Z'/W'}) \subseteq \pi'^*\widehat\tau_*\OO_{\widehat Z}\big(K_{\widehat Z/ \widehat W}\big) = \OO_{W'}\big(\sum a_i \pi'^*\widehat E_i\big),
\end{equation}
where the last equality follows by \eqref{eq:int}. By \cite[Theorem 6.12]{Vi95} the line bundle $\tau'_*\OO_{Z'}(K_{Z' / W'})$ is nef. 

We claim
\begin{equation}\label{eq:126}
\tau'_*\OO_{Z'}(K_{Z' / W'})\simeq\OO_{W'}.
\end{equation}
Indeed, let $L$ be a nef Cartier divisor such that $\OO_{W'}(L)\simeq\tau'_*\OO_{Z'}(K_{Z' / W'})$, let $\lambda\colon W'\to W''$ be the birational part of the Stein factorisation of the generically finite map $\widehat \pi \circ \pi'$, and denote $E_i':=\pi'^*\widehat E_i$. Then the divisors $E_i'$ are $\lambda$-exceptional. By \eqref{eq:127} we may assume that there exists an effective Cartier divisor $M$ such that
$$\textstyle L+M=\sum a_i E_i'.$$
By the Negativity lemma \cite[Lemma 3.39]{KM98}, applied to $\lambda$, we have $M\geq \sum a_i E_i'$, and hence $L\leq0$. But then $L=0$ and the claim \eqref{eq:126} follows.

\medskip 

\emph{Step 5.}
We define the generically finite morphism $h'\colon Z'\to X$ by the following commutative diagram:
$$
\xymatrix{
Z' \ar[d]_{\tau'} \ar[r]_{\sigma'} \ar@/^1.2pc/[rrrr]^{h'} & \widehat Z  \ar[d]_{\widehat \tau} \ar[r]_{\widehat \sigma} & {Z}  \ar[d]_{\tau} \ar[r]_\sigma & \widetilde{\mathcal{C}} \ar[d]_{\widetilde{q}} \ar[r]_{\widetilde{p}} & X  \\
W' \ar[r]_{\pi'} & \widehat W \ar[r]_{\widehat \pi} & W \ar[r]_{\pi} &  \widetilde{T}. &
}
$$
Then we obtain a non-zero morphism
\begin{equation}\label{eq:128}
h'^*\Omega^1_X  \to \Omega^1_{Z'} \to \Omega^1_{Z'/W'} \to \OO_{Z'}(K_{Z'/W'}). 
\end{equation}
The canonical injective morphism
$$ \tau'^* \tau'_*\OO_{Z'}(K_{Z'/W'}) \to \OO_{Z'}(K_{Z'/W'})$$
is an isomorphism away from $\tau'^{-1}(\Pi)$, since the fibres of $\tau'$ outside $\Pi$ are smooth elliptic curves.  Since $\tau'^* \tau'_*\OO_{Z'}(K_{Z'/W'}) \simeq \OO_{Z'}$ by \eqref{eq:126}, there exists an effective divisor $D'$ supported on $ \tau'^{-1}(\Pi)$ such that 
\begin{equation}\label{eq:129}
\OO_{Z'}(K_{Z'/W'}) = \OO_{Z'}(D').
\end{equation}
Hence, by \eqref{eq:128} and \eqref{eq:129} we obtain
\begin{equation}\label{eq:131}
H^0\big(Z',h'^*T_X \otimes \OO_{Z'}(D')\big) \neq 0.
\end{equation}
Since $\widetilde q$ is equidimensional and since $\codim_{\widetilde{T}} (\pi \circ \widehat \pi \circ \pi')(\Pi) \geq 2$, we conclude that  
\begin{equation}\label{eq:130}
\codim_X h'(D') \geq 2.
\end{equation}
Finally, let 
$$ Z' \buildrel h_1' \over {\longrightarrow}  Y \buildrel f \over {\longrightarrow} X $$
be the Stein factorization of $h'$. 
Then 
$$H^0(Y, f^*T_X) \neq 0$$
by \eqref{eq:131} and \eqref{eq:130}.
\end{proof} 

The following is the main result in this section. 

\begin{thm}  \label{thm:CYHK} 
Let $X$ be a Calabi-Yau or a projective hyperk\"ahler manifold. If $X$ has a covering family of elliptic curves, then $X$ contains a uniruled divisor. 
\end{thm}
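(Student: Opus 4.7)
The plan is to derive a contradiction by combining Proposition~\ref{new1} with Lemma~\ref{lem1}. Assume for contradiction that $X$ contains no uniruled divisor. Since $X$ carries a covering family of elliptic curves by hypothesis, Proposition~\ref{new1}(3) provides a normal projective variety $Y$ and a finite surjective morphism $f\colon Y\to X$ together with a non-zero section $s\in H^0(Y,f^*T_X)$. My goal is to convert $s$ into a non-zero pseudoeffective rank-one reflexive subsheaf of $\Omega_X^{[p]}$ with the ``wrong'' value of $p$, and then invoke Lemma~\ref{lem1}.

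Next I would exploit the $K$-trivial structure to replace tangent sheaves by differential forms. Contraction with a nowhere-zero holomorphic volume form yields $T_X\simeq \Omega_X^{n-1}$ in the Calabi-Yau case, while contraction with the symplectic form yields $T_X\simeq \Omega_X^{1}$ in the hyperk\"ahler case; writing $p=n-1$ or $p=1$ respectively, we obtain $0\neq s\in H^0(Y,f^*\Omega_X^{p})$. By the projection formula this corresponds to a non-zero homomorphism $\varphi\colon (f_*\mathcal{O}_Y)^\vee\to \Omega_X^{p}$ on $X$ itself.

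The core technical step is to extract from $\varphi$ a pseudoeffective rank-one reflexive subsheaf $\mathcal{L}\subseteq \Omega_X^{[p]}$. I would split $f_*\mathcal{O}_Y=\mathcal{O}_X\oplus \mathcal{F}$ via the trace map. If the restriction $\varphi|_{\mathcal{O}_X}$ is non-zero, then $\varphi$ produces a non-zero section of $\Omega_X^{p}$ and $\mathcal{L}=\mathcal{O}_X\hookrightarrow\Omega_X^{p}$ is trivial and hence pseudoeffective. Otherwise $\varphi$ factors through $\mathcal{F}^\vee\to \Omega_X^{p}$, and here one combines Viehweg's weak positivity of $f_*\omega_{Y/X}\simeq (f_*\mathcal{O}_Y)^\vee$ with the $\mu_H$-polystability of $\Omega_X^{p}$ on the $K$-trivial $X$ (as used already in the proof of Lemma~\ref{lem1}) to saturate the image of $\varphi$ and peel off a rank-one reflexive subsheaf with pseudoeffective first Chern class; if the saturated image has rank $r\geq 2$, one passes to its determinant inside $\wedge^{r}\Omega_X^{p}$ and composes with the natural wedge map into $\Omega_X^{[rp]}$, adjusting $p$ accordingly.

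Once $\mathcal{L}\subseteq\Omega_X^{[p]}$ is in place, Lemma~\ref{lem1} concludes the proof: in the Calabi-Yau case, part~(ii) of the lemma forces $p=n$, contradicting $p=n-1$; in the hyperk\"ahler case, part~(iii) forces $p$ to be even, contradicting $p=1$. The hardest part of the plan will be the trace-zero half of the construction of $\mathcal{L}$: one must keep the extracted rank-one subsheaf inside $\Omega_X^{[p]}$ with the specific $p$ coming from the $K$-trivial identification (rather than in some larger exterior power) and verify pseudoeffectivity after all the saturations. A cleaner alternative would be to first replace $f$ by its Galois closure and analyse the isotypic decomposition of $f_*\mathcal{O}_Y$, since the finiteness of $\pi_1(X)$ in the Calabi-Yau case and its triviality in the hyperk\"ahler case strongly restrict the non-trivial isotypic components that can appear.
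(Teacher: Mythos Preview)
Your proposal and the paper's proof agree up to the trace splitting $f_*\mathcal O_Y=\mathcal O_X\oplus\mathcal F$ and the observation that $H^0(X,T_X)=0$ forces a non-zero map $\alpha\colon\mathcal F^\vee\to T_X$. After that the two arguments diverge, and your intended route through Lemma~\ref{lem1} does not close.

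The gap is in the ``core technical step''. You want a \emph{rank-one} pseudoeffective reflexive subsheaf $\mathcal L\subseteq\Omega_X^{[p]}$ with $p=n-1$ (Calabi--Yau) or $p=1$ (hyperk\"ahler). But for a Calabi--Yau or hyperk\"ahler manifold the bundle $\Omega_X^p\simeq T_X$ is $H$-\emph{stable} with $c_1=0$, not merely semistable; hence every rank-one subsheaf has strictly negative slope and can never be pseudoeffective. So the object you are trying to construct simply does not exist, and no amount of saturating images of $\mathcal F^\vee$ can produce it. What the positivity of $\mathcal F^\vee$ together with stability actually forces is that the image of $\alpha$ has \emph{full} rank~$n$. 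Your fallback of passing to the determinant and landing in $\Omega_X^{[rp]}$ does not help either: in the Calabi--Yau case $r(n-1)\geq n$ for $r\geq 2$, so either $\Omega_X^{[rp]}=0$ or $rp=n$ and Lemma~\ref{lem1} yields no contradiction; in the hyperk\"ahler case $rp=r$ can be even, again giving no contradiction with Lemma~\ref{lem1}(iii). The Galois-closure suggestion faces the same obstruction, since the problem is the stability of $\Omega_X^p$, not the structure of $f_*\mathcal O_Y$.

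The paper instead exploits the full-rank conclusion directly. Using a result of Lazarsfeld (in \cite{PS00}) it shows that $\mathcal F^\vee|_C$ is nef for a general complete-intersection curve $C$ on a surface slice $S\subseteq X$; combined with Mehta--Ramanathan stability of $T_X|_S$, the generic surjection $\mathcal F^\vee|_S\twoheadrightarrow T_X|_S$ makes $T_X|_C$ nef on all but finitely many curves in $S$. A numerical-flatness criterion from \cite{HP17} then gives $c_2(T_X|_S)=0$, hence $c_2(X)\cdot H^{n-2}=0$ and $c_2(X)=0$. Yau's theorem finishes: $X$ is an \'etale quotient of a torus, contradicting the Calabi--Yau/hyperk\"ahler hypothesis. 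In short, the decisive input is not Lemma~\ref{lem1} but the Lazarsfeld nefness of $\mathcal F^\vee$ together with the vanishing of $c_2$.
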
 

\begin{proof} 
Arguing by contradiction and applying Proposition \ref{new1}, there exists a normal projective variety $Y$ and a finite surjective map $f\colon Y \to X$ such that
$$ H^0(Y, f^*T_X) \neq 0.$$ 
Since the map $f$ is finite, there exists a reflexive sheaf $\sF$ such that 
$$f_*\sO_{Y} \simeq \sO_X \oplus \sF.$$
Consequently, 
\begin{equation}\label{eq:sum1}
0\neq H^0(X,f_*f^*T_X) = H^0(X, T_X)\oplus H^0(X,T_X \otimes \sF).
\end{equation}
Since $X$ is a Calabi-Yau or a hyperk\"ahler manifold, we have $H^0(X,T_X) = 0$, which in combination with \eqref{eq:sum1} yields
$$ H^0(X, T_X \otimes \sF) \ne 0.$$ 
Therefore, there exists a non-zero morphism
$$ \alpha\colon \sF^* \to T_X.$$ 
Let $H$ be an ample divisor on $X$ and let 
$$S  = H_1 \cap \ldots \cap H_{n-2}$$ 
be a surface cut by general hyperplane sections $H_j \in | mH | $ of degree $m \gg 0$. Set $Y_S := f^{-1}(S)$. Then $Y_S$ normal and therefore Cohen-Macaulay. In particular, $f|_{Y_S}\colon Y_S \to S$ is flat. By a result of Lazarsfeld \cite[Appendix, Proposition A]{PS00}, applied to the flat morphism $f_S$, the locally free sheaf 
$$\big(\big((f|_{Y_S})_*(\OO_{Y_S})\big)^*/\OO_S\big)|_C$$ 
is nef for any curve $C \subseteq S$ which is not contained in the branch locus $B$ of $f|_S$. But this sheaf is precisely $\mathcal F^*|_C$ by definition. Consequently, the sheaves $(\alpha|_S)(\mathcal F^*|_S){|_C}$ and $(\det (\alpha|_S)(\mathcal F^*|_S)){|_C}$ are nef. In particular, this holds for any general curve on $S$ cut out by $mH|_S$. Since $T_X$ is $H$-stable, the restriction $T_X|_S$ is $H|_S$-stable by the theorem of Mehta-Ramanathan, hence
$$ \rk(\alpha|_S)(\mathcal F^*|_S)=\dim X.$$ 
Thus, there exists a proper closed subset $A\subseteq S$ such that $\alpha|_S$ is surjective on $S \setminus A $. Since $(\alpha|_S)(\mathcal F^*|_S){|_C}$ is nef on every curve $C \not \subseteq B \cup A$, so is ${T_X}|_C$. 

To summarise, the locally free sheaf
$$ \mathcal E := T_X|_S$$ 
is $H|_S$-stable with $c_1(\mathcal E) = 0$ and is nef on all but finitely many curves. By \cite[Corollary 5.4]{HP17}, $\mathcal E$ is numerically flat,\footnote{Actually, $\mathcal{E}$ is even trivial, since it has a filtration by unitary flat bundles and since $\pi_1(S) = 0$.} and in particular, $c_2(\mathcal E) = 0$. Thus 
$$ c_2(X) \cdot H^{n-2} = 0,$$ 
and consequently,
$$c_2(X) = 0.$$
By Yau's theorem, $X$ is then an \'etale quotient of a torus, which contradicts our assumption that $X$ is a Calabi-Yau or a hyperk\"ahler manifold. 
\end{proof} 

The proof  of Theorem \ref{thm:CYHK} shows also the following. 

\begin{thm} 
Let $X$ be a projective manifold which contains a covering family of elliptic curves. Then $X$ contains a uniruled divisor unless there exists a torsion free quotient $Q$ of $ \Omega^1_X $ such that $\det Q \equiv 0$.
\end{thm}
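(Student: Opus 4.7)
The approach is to adapt the proof of Theorem~\ref{thm:CYHK}, replacing the use of semistability of $T_X$ (which relied on the Calabi-Yau or hyperk\"ahler assumption) by Miyaoka's generic semi-positivity theorem. Assume $X$ contains no uniruled divisor; note then that $X$ is non-uniruled in particular, since a dominating family of rational curves would sweep out a uniruled divisor. By Proposition~\ref{new1} there exists a finite surjective morphism $f\colon Y\to X$ from a normal projective variety with $H^0(Y,f^*T_X)\neq 0$, and writing $f_*\mathcal{O}_Y=\mathcal{O}_X\oplus\mathcal F$ with $\mathcal F$ reflexive, this yields a non-zero morphism $\beta\colon (f_*\mathcal O_Y)^*\to T_X$.

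Let $\mathcal G\subseteq T_X$ be the saturation of $\operatorname{im}(\beta)$, a reflexive subsheaf of some rank $r\geq 1$, and define
\[
Q:=\Omega^1_X/(T_X/\mathcal G)^*,
\]
which is the image of the dualised inclusion $\Omega^1_X\to\mathcal G^*$ and hence a torsion-free quotient of $\Omega^1_X$ of rank $r$. A standard Chern class computation using the short exact sequences $0\to\mathcal G\to T_X\to T_X/\mathcal G\to 0$ and $0\to (T_X/\mathcal G)^*\to\Omega^1_X\to Q\to 0$ gives the numerical identity $c_1(Q)\equiv -c_1(\mathcal G)$, so it suffices to prove $c_1(\mathcal G)\equiv 0$.

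To do so, we bound $c_1(\mathcal G)\cdot H^{n-1}$ from above and below for an arbitrary ample class $H$. Fix $H$, take $m\gg 0$ and $n-1$ general members $H_1,\ldots,H_{n-1}\in |mH|$, and set $S=H_1\cap\cdots\cap H_{n-2}$ and $C=S\cap H_{n-1}$, arranging that $C$ avoids the branch locus of $f|_{Y_S}\colon Y_S\to S$ exactly as in the proof of Theorem~\ref{thm:CYHK}. By Lazarsfeld's theorem \cite[Appendix, Proposition A]{PS00}, $\mathcal F^*|_C$ is nef, hence so is $(f_*\mathcal O_Y)^*|_C=\mathcal O_C\oplus\mathcal F^*|_C$, and therefore so is its quotient $\mathcal G|_C$; this yields $c_1(\mathcal G)\cdot H^{n-1}\geq 0$. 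For the reverse inequality, Miyaoka's generic semi-positivity theorem, applied to the non-uniruled variety $X$ and to a general complete intersection curve $C$ of sufficiently high degree, ensures that $\Omega^1_X|_C$ is nef, and hence so is the quotient $Q|_C$; this gives $c_1(Q)\cdot H^{n-1}\geq 0$, which via $c_1(Q)\equiv -c_1(\mathcal G)$ translates into $c_1(\mathcal G)\cdot H^{n-1}\leq 0$.

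Combining the two inequalities yields $c_1(\mathcal G)\cdot H^{n-1}=0$ for every ample class $H$. Polarising this identity on the open ample cone and invoking hard Lefschetz -- which identifies $N^1(X)_{\mathbb R}$ with $N^{n-1}(X)_{\mathbb R}$ via cup product with a fixed power $[H]^{n-2}$ -- we conclude that $c_1(\mathcal G)\equiv 0$, so $\det Q\equiv 0$ as required. The main obstacle to overcome, compared with the proof of Theorem~\ref{thm:CYHK}, is the absence of semistability of $T_X$, which was previously used to force $\mathcal G$ to be of full rank and eventually to deduce $c_2(X)=0$; here it suffices to work with $\mathcal G$ of any rank, and Miyaoka's theorem supplies exactly the missing lower bound on $c_1(\mathcal G)\cdot H^{n-1}$ that was automatic under the Calabi-Yau/hyperk\"ahler hypothesis.
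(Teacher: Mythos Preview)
Your argument is correct in substance, though two phrasings are slightly loose: the sentence justifying that $X$ is non-uniruled is not quite right as stated (a dominating family of rational curves sweeps out $X$, not a divisor), but the conclusion holds --- if $X$ were uniruled one could take a ruled variety $Y$ mapping generically finitely to $X$ and push down a vertical divisor to obtain a uniruled divisor in $X$. Likewise, $\mathcal G|_C$ is not literally a quotient of $(f_*\mathcal O_Y)^*|_C$ (you saturated the image), but the only thing you use is $c_1(\mathcal G)\cdot C\ge 0$, and that follows since $c_1(\mathcal G)\ge c_1(\operatorname{im}\beta)$ and $\operatorname{im}\beta|_C$ is a quotient sheaf of a nef bundle.

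Your route, however, differs from the paper's. The paper argues by dichotomy: if $\Omega^1_X$ is generically ample, then the subsheaf of $T_X$ with non-negative degree produced by Proposition~\ref{new1} and Lazarsfeld's result already gives a contradiction (so a uniruled divisor must exist); if $\Omega^1_X$ is not generically ample, the quotient $Q$ is supplied directly by \cite[Proposition~2]{Pe11}, with no further reference to the elliptic family. You instead give a single, unified construction: the quotient $Q$ is built explicitly from the saturated image $\mathcal G\subseteq T_X$ arising from the elliptic family, and you sandwich $c_1(\mathcal G)\cdot H^{n-1}$ between $0$ (Lazarsfeld) and $0$ (Miyaoka's generic semipositivity). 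Your approach is more constructive and self-contained, trading the black-box citation of \cite{Pe11} for a direct appeal to Miyaoka; the paper's version is terser but relies on that external structural result, which in turn also rests on Miyaoka's theorem.
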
 

\begin{proof} 
We may assume that $X$ itself is not uniruled. If $\Omega^1_X $ is generically ample, i.e., ample on curves cut out by general hyperplane sections, then the proof of Theorem \ref{thm:CYHK} applies. If $\Omega^1_X $ is not generically ample, then \cite[Proposition 2]{Pe11} gives the quotient $Q$ as required. 
\end{proof} 

If $X$ is elliptically chain connected, then the existence of rational curves follows without any further assumptions on $X$: 

\begin{thm} \label{thm:ellipticrational}
Let $X$ be an elliptically chain connected projective manifold with $\dim X \geq 2$. Then $X$ contains a rational curve. 
\end{thm}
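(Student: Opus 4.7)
The plan is to derive this from the structure theorem (Theorem \ref{thm:ratcurves}) by arguing by contradiction: if $X$ has no rational curves, the $\mathcal C$-quotient is forced to be a point whose unique fibre is $X$ itself, and the conclusion of Theorem \ref{thm:ratcurves} then forces $\dim X = 1$.

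More precisely, I would argue as follows. Since $X$ is elliptically chain connected, by definition there exists a covering family $\mathcal C = (C_t)_{t\in T}$ on $X$ such that the normalisation of a general $C_t$ is an elliptic curve and the $\mathcal C$-quotient $f\colon X\dashrightarrow Z$ of Proposition \ref{quot} has $\dim Z = 0$, i.e.\ $Z$ is a point. Suppose towards a contradiction that $X$ contains no rational curve. Then the hypotheses of Theorem \ref{thm:ratcurves} are satisfied, so there exists a fibration $\varphi\colon X \to W$ to a normal projective variety $W$ which is precisely the $\mathcal C$-quotient of $X$. By uniqueness of the $\mathcal C$-quotient, $W$ must coincide with $Z$, so $W$ is a point.

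But then $\varphi$ has only one fibre, namely $X$ itself, and by part (iii) of Theorem \ref{thm:ratcurves} the normalisation of this fibre is an elliptic curve. Since $X$ is a smooth projective manifold, $X$ is its own normalisation, so $X$ itself is an elliptic curve; in particular $\dim X = 1$, contradicting the assumption $\dim X \geq 2$. This contradiction shows that $X$ must contain a rational curve.

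The only potential subtlety is verifying that the fibration $\varphi$ produced by Theorem \ref{thm:ratcurves} really is the $\mathcal C$-quotient associated to the same family $\mathcal C$ witnessing elliptic chain connectedness; but this is explicit in part (i) of that theorem. Thus no serious new work is required, and the entire argument is a short deduction once Theorem \ref{thm:ratcurves} is available. The main conceptual content of the result therefore lies in Theorem \ref{thm:ratcurves} itself, not in this corollary.
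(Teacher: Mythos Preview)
Your argument is circular. The proof of Theorem \ref{thm:ratcurves} in the paper invokes Theorem \ref{thm:ellipticrational} as its very first step: it uses Theorem \ref{thm:ellipticrational} to conclude that $X$ is \emph{not} elliptically chain connected with respect to $\mathcal C$, so that the $\mathcal C$-quotient $W$ has positive dimension, and then again to rule out fibres of dimension at least $2$. Without Theorem \ref{thm:ellipticrational} already in hand, nothing in the proof of Theorem \ref{thm:ratcurves} tells you that the fibres of $\varphi$ are one-dimensional, so conclusion (iii) is simply unavailable in the case you need --- when $W$ is a point, the ``fibre'' is all of $X$, and the arguments in Steps 2 and 3 of that proof (which cut down to surfaces and apply Lemma \ref{lem:surface}) do not apply.

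The paper's actual proof proceeds quite differently: assuming no rational curve, $K_X$ is nef by the Cone theorem, and then Theorem \ref{thm:ecc1a} (which needs no flips since there are no $K_X$-negative extremal rays at all) gives $K_X\sim_{\mathbb Q}0$. The Beauville--Bogomolov decomposition then reduces to Calabi--Yau and hyperk\"ahler factors via Lemma \ref{lem:conn}, and Theorem \ref{thm:CYHK} supplies the contradiction. So the logical order is the reverse of what you assumed: Theorem \ref{thm:ellipticrational} is an input to Theorem \ref{thm:ratcurves}, not a corollary of it.
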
 

\begin{proof} 
Arguing by contradiction, assume that $X$ does not contain a rational curve. Then $K_X$ is nef by Mori's Cone theorem, hence $K_X \sim_\Q0$ by Theorem \ref{thm:ecc1a}. By the Beauville-Bogomolov decomposition theorem, there exists a finite \'etale cover from a product of Calabi-Yau manifolds, hyperk\"ahler manifolds and an abelian variety to $X$. We may assume that $X$ itself decomposes as
$$ X = X_1 \times \ldots \times X_r.$$

If  $r = 1$,  then $X$ must be Calabi-Yau or hyperk\"ahler, since abelian varieties of dimension at least two are not elliptically chain connected. Then we conclude by Theorem \ref{thm:CYHK}.

If $r \geq 2,$ then all factors $X_j$ are elliptically chain connected by Lemma \ref{lem:conn}. Furthermore, at most one factor can be abelian and this factor has to be of dimension one. Hence one of the factors $X_j$ is Calabi-Yau or hyperk\"ahler and we conclude again by Theorem \ref{thm:CYHK}.
\end{proof} 

\begin{lem} \label{lem:conn} 
Let $X$ be an elliptically chain connected projective manifold. Assume that $X \simeq Y_1 \times Y_2$, where $Y_1$ and $Y_2$ are projective manifolds. Then $Y_1$ and $Y_2$ are elliptically chain connected. 
\end{lem}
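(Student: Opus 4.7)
By symmetry it suffices to show $Y_1$ is elliptically chain connected, and we may assume $\dim Y_1 \geq 1$ (else the claim is vacuous). Let $\pi_i\colon X \to Y_i$ denote the projections and $\mathcal{C}=(C_t)_{t\in T}$ the given chain-connecting family of elliptic curves on $X$. The plan is to show that the image family
$$\mathcal{F}_1=\{\pi_1(C_t) : t\in T,\ \pi_1(C_t) \text{ is a curve}\}$$
is itself an elliptic chain-connecting family on $Y_1$.

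First I would rule out that $\pi_1(C_t)$ is a point for general $t\in T$. Otherwise every general $C_t$ would lie in a fibre $\{y\}\times Y_2$ of $\pi_1$, and since consecutive members of any chain must meet and distinct $\pi_1$-fibres are disjoint, chains would be trapped inside single fibres of $\pi_1$, contradicting the chain-connectedness of $X$. Next, I would verify that $\mathcal{F}_1$ chain-connects $Y_1$ by direct projection of chains: for very general $p, p' \in Y_1$ and a very general auxiliary $p_2 \in Y_2$, any chain of $C_{t_i}$'s in $X$ joining $(p, p_2)$ to $(p', p_2)$ projects via $\pi_1$ to a chain in $\mathcal{F}_1$ joining $p$ to $p'$, after discarding the members whose image is a point.

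The main obstacle is to ensure that a general member of $\mathcal{F}_1$ is elliptic rather than rational. Since the normalisation of $\pi_1(C_t)$ receives a non-constant morphism from the elliptic normalisation of $C_t$, its geometric genus is $0$ or $1$. Suppose for contradiction that a general $\pi_1(C_t)$ is rational. Then $Y_1$ is uniruled; letting $\phi\colon Y_1 \dashrightarrow W$ be the MRC fibration of $Y_1$, the map $\phi$ contracts every such rational image. Consequently the composition $\phi\circ\pi_1\colon X \dashrightarrow W$ contracts a general member of $\mathcal{C}$, and by the universal property of the $\mathcal{C}$-quotient (which is a point by hypothesis) this composition factors through a point; hence $W$ is a point and $Y_1$ is rationally connected. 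But a rationally connected smooth projective variety admits no non-constant morphism from an elliptic curve, contradicting the first step. Therefore for general $t$ the image $\pi_1(C_t)$ is an elliptic curve, so $\mathcal{F}_1$ is the desired elliptic chain-connecting family on $Y_1$, and the same argument with indices exchanged handles $Y_2$.
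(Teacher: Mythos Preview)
Your approach mirrors the paper's---project the family along $\pi_1$ and verify that the images chain-connect $Y_1$---but you go further: where the paper asserts without comment that the projected curves form a family of \emph{elliptic} curves on $Y_1$, you correctly sense that this needs justification and try to supply it. Unfortunately the final step of your justification is wrong: the claim that a rationally connected smooth projective variety admits no non-constant morphism from an elliptic curve is false (smooth plane cubics in $\mathbb{P}^2$, for instance). So your contradiction does not close.

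In fact no argument can close it, because the lemma as stated appears to be false. Take $X=\mathbb{P}^1\times\mathbb{P}^1$: the smooth curves of bidegree $(2,2)$ are elliptic and pass through any two general points, so $X$ is elliptically chain connected, yet $\mathbb{P}^1$ contains no curve with elliptic normalisation and hence is not elliptically chain connected. The statement (and the paper's short proof) becomes correct under the extra hypothesis that the factors $Y_i$ are not uniruled---which does hold in the only place the lemma is applied, where the factors come from the Beauville--Bogomolov decomposition and are Calabi--Yau, hyperk\"ahler, or abelian. With that hypothesis in hand your MRC detour is unnecessary: if the general $\pi_1(C_t)$ were rational, the images would form a covering family of rational curves on $Y_1$, making $Y_1$ uniruled, an immediate contradiction.
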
 

\begin{proof} 
Let $(C_t)_{t\in T}$ be a chain connecting family of elliptic curves on $X$. By symmetry, it suffices to show that $Y_1$ is elliptically chain connected, and let $p_1\colon X \to Y_1$ denote the first projection. 

Suppose that $\dim p_1(C_{t_0}) = 0$ for some $t_0\in T$. If $H$ is an ample divisor on $Y_1$, then $p_1^*H\cdot C_{t_0}=0$, hence $p_1^*H\cdot C_t=0$ for all $t\in T$ by \cite[Proposition I.3.12]{Kol96}. Therefore, $\dim p_1(C_t) = 0$ for all $t\in T$, i.e., all curves $C_t$ are contained in fibres of $p_1$. But then the family $(C_t)_{t\in T}$ cannot be chain connecting. 

Thus $\dim p_1(C_t) = 1,$ and therefore we obtain a covering family $(C'_s)$ of elliptic curves on $Y_1$. This family is chain connecting: choose general points $a_1, a_2 \in Y_1$ and general points $b_1,b_2 \in Y_2$. Then the points $(a_1,b_1),(a_2,b_2)\in Y_1\times Y_2$ can be joined by a chain of curves $C_t$, hence $a_1$ and $a_2$ can be joined by a chain of curves $C'_s$. 
\end{proof} 

\begin{thm} \label{thm:kappa0} 
Let $X$ be a smooth projective variety with $\kappa (X,K_X) = 0$. Assume that $X$ carries a covering family of elliptic curves and that $X$ does not contain any rational curve. Then there is a finite \'etale cover $X' \to X$ with 
$$X'=E \times \prod\limits_j X_j,$$ 
where $E$ is an elliptic curve and each $X_j$ is either a torus, a Calabi-Yau manifold or a hyperk\"ahler manifold. 
\end{thm}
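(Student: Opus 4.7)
My plan is to apply the Beauville--Bogomolov decomposition and then use Theorem \ref{thm:CYHK} to force the covering family of elliptic curves to project trivially to each non-abelian factor, from which Poincar\'e reducibility will extract the elliptic factor $E$.

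First, I would establish that $K_X\sim_\Q 0$. Since $X$ contains no rational curves, Mori's Cone theorem implies that $K_X$ is nef; combined with $\kappa(X,K_X)=0$, a standard result (Kawamata/Nakayama) then gives $K_X\equiv 0$, and hence $K_X\sim_\Q 0$. The Beauville--Bogomolov decomposition theorem produces a finite \'etale cover $\pi\colon X''\to X$ of the form
$$ X''=A\times\prod_i Y_i\times\prod_j Z_j,$$
where $A$ is abelian, each $Y_i$ is Calabi-Yau and each $Z_j$ is irreducible hyperk\"ahler. As $\pi$ is finite \'etale and $X$ contains no rational curves, neither does $X''$; in particular, no factor $Y_i$ or $Z_j$ contains a rational curve. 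The covering family $\mathcal C$ on $X$ pulls back to a covering family on $X''$ whose general member is again elliptic, since every connected \'etale cover of an elliptic curve is elliptic.

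Next, for each non-abelian factor $F\in\{Y_i,Z_j\}$, let $p_F\colon X''\to F$ be the projection, and I would show that $p_F$ contracts a general lifted curve $C_s'$: if not, the image $p_F(C_s')$ would be a curve whose normalisation, being dominated by an elliptic curve, has genus $\leq 1$ by Hurwitz, and since $F$ contains no rational curve this normalisation is elliptic. As $(C_s')$ covers $X''$ and $p_F$ is surjective, the images $\{p_F(C_s')\}$ would then form a covering family of elliptic curves on $F$, so Theorem \ref{thm:CYHK} would produce a rational curve on $F$, a contradiction. Hence a general $C_s'$ is contracted by every projection to $Y_i$ and $Z_j$, so it projects non-trivially to $A$. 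Since the image of an elliptic curve under a morphism to an abelian variety is either a point or an elliptic subvariety, $A$ contains an elliptic curve $E$. By Poincar\'e's reducibility, $A$ is isogenous to $E\times A'$ for some abelian variety $A'$ (possibly trivial); the isogeny is a finite \'etale cover, and pulling back $X''$ along it gives a finite \'etale cover $X'\to X$ with
$$ X'=E\times A'\times\prod_i Y_i\times\prod_j Z_j,$$
which is the required decomposition.

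The main obstacle I foresee is the middle step: one must verify carefully that the images $p_F(C_s')$ on a non-abelian factor really assemble into a covering family of the kind required by Theorem \ref{thm:CYHK}, and that rational curves do not appear on any factor (the latter reduces to the observation that rational curves in a product project to rational curves in each factor, and étale covers of curves with no rational curves have no rational curves). The remaining ingredients (nefness of $K_X$, the Beauville--Bogomolov decomposition, and Poincar\'e reducibility) are standard.
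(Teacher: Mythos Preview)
Your proposal is correct and follows essentially the same route as the paper: reduce to $K_X\sim_\Q 0$ (the paper refers back to the proof of Theorem~\ref{thm:ellipticrational}), apply the Beauville--Bogomolov decomposition, use Theorem~\ref{thm:CYHK} to exclude the elliptic family from projecting onto any Calabi--Yau or hyperk\"ahler factor, and then apply Poincar\'e reducibility to split off an elliptic factor from the abelian part. Your write-up is in fact slightly more explicit than the paper's about why the images on a non-abelian factor would form a covering family of elliptic curves, which is exactly the point you flag as the main thing to check.
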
 

\begin{proof} 
As in the proof of Theorem \ref{thm:ellipticrational}, there is a finite \'etale cover $X'\to X$ such that
$$X'= T \times \prod_j X_j,$$
where $T$ is a torus (a priori possibly of dimension $0$) and each $X_j$ is a Calabi-Yau manifold or a hyperk\"ahler manifold. Considering a covering family $(C'_s)$ of $X'$ and the induced maps $C'_s \to X_j$ and $C'_s \to T$, we conclude that at least one of the factors $X_j$ or $T$ is covered by elliptic curves. In the first case, $X_j$ would contain a rational curve by Theorem \ref{thm:CYHK}, a contradiction. Thus $\dim T > 0 $ and $T$ is covered by a family of elliptic curves. Hence, by Poincar\'e's Reducibility theorem, after passing to a finite \'etale cover, we may assume that $T = T' \times E$, where $E$ is an elliptic curve. 
\end{proof} 

\begin{rem} 
In fact, in Theorem \ref{thm:kappa0} we expect that $X'$ is a product of an elliptic curve and a torus. This, however, requires to prove that Calabi-Yau and hyperk\"ahler manifolds contain rational curves, which seems out of reach at the moment. 
\end{rem}

\begin{thm} \label{thm:ratcurves} 
Let $X$ be a projective manifold of dimension at least $2$ with a covering family $\sC$ of elliptic curves. Suppose $X$ does not contain rational curves. Then there exists an equidimensional fibration $\varphi\colon X \to W$ to a normal projective variety $W$ with the following properties:
\begin{enumerate} 
\item[(i)] $\varphi$ contract all elements of $\mathcal C$; more precisely, $\varphi$ is the $\mathcal C$-quotient of $X$;
\item[(ii)] all fibres of $\varphi$ are irreducible;
\item[(iii)] the normalisation of any fibre of $\varphi$ is an elliptic curve;
\item[(iv)] $\varphi$ is an almost smooth elliptic fibration over the smooth locus of $W$;
\item[(v)] $W$ has klt singularities.
\end{enumerate} 
\end{thm}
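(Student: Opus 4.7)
The plan is to take $\varphi\colon X\dashrightarrow W$ to be the $\mathcal C$-quotient of Proposition \ref{quot}, and to show that it extends to a morphism whose fibres have the required structure. First note that since $X$ contains no rational curves, $K_X$ is nef by Mori's cone theorem, and in particular $X$ contains no uniruled divisor. Therefore Proposition \ref{new1} applies: the family $\mathcal C$ has no moduli, the projection $\widetilde p\colon \widetilde{\mathcal C} \to X$ is finite, and $\dim T = \dim X - 1$. By Proposition \ref{prop:etale}, a general $C_t$ is smooth elliptic with trivial normal bundle.

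The first key step is to show that a general fibre $F$ of $\varphi$ is one-dimensional. Such $F$ is smooth by generic smoothness and is chain connected by the restriction of $\mathcal C$; if $\dim F\ge 2$, then Theorem \ref{thm:ellipticrational} produces a rational curve in $F$, hence in $X$, a contradiction. To extend $\varphi$ to a morphism, I would use the finiteness of $\widetilde p$: the morphism $\widetilde q\colon \widetilde{\mathcal C}\to \widetilde T$, composed with the $\mathcal C$-quotient on the parameter space, gives a rational map $\widetilde{\mathcal C}\dashrightarrow W$ which, by definition of chain connectedness, is constant on fibres of $\widetilde p$; since $\widetilde p$ is finite and surjective, one descends to a morphism $\varphi\colon X\to W$. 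Equidimensionality then follows from the 1-dimensionality of general fibres together with the observation that any higher-dimensional special fibre would yield a rational curve in $X$ via Lemma \ref{lem:surface} upon slicing by the preimage of a general complete intersection curve through the corresponding point of $W$.

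For properties (ii)--(iv), the main tool is Lemma \ref{lem:surface}. Choose a general complete intersection curve $C\subseteq W$ and set $X_C := \varphi^{-1}(C)$. Since $C$ is general, $X_C$ is a smooth projective surface, and $\varphi|_{X_C}\colon X_C\to C$ is an elliptic fibration whose fibres contain no rational curves (else $X$ would). Lemma \ref{lem:surface} then implies that $\varphi|_{X_C}$ is almost smooth, with each fibre the multiple of a smooth elliptic curve, hence irreducible. Varying $C$ across $W_{\mathrm{reg}}$ yields (iv) and also (iii) over $W_{\mathrm{reg}}$. A hypothetical reducible fibre of $\varphi$ at some point $w_0\in W$ would, after passing a general complete intersection curve $C$ through $w_0$, produce a reducible fibre of $X_C\to C$, contradicting Lemma \ref{lem:surface}; hence (ii) holds, and (iii) extends to all of $W$ by irreducibility and equidimensionality. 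For (v), I would invoke the canonical bundle formula for elliptic fibrations (Fujita-Kawamata-Ambro): since $\mathcal C$ has no moduli, the moduli part is torsion, so $K_X \sim_\Q \varphi^*(K_W + \Delta)$ for an effective $\Delta$ accounting for multiple fibres, and $(W,\Delta)$ is klt by standard adjunction results, so $W$ itself has klt singularities.

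The main obstacle is the extension of $\varphi$ to a morphism, since the $\mathcal C$-quotient is \emph{a priori} only an almost holomorphic rational map, and the absence of rational curves in $X$ does not directly prevent $\pi$-exceptional rational curves on a resolution of indeterminacies. The argument must genuinely exploit the finiteness of $\widetilde p$ from Proposition \ref{new1} together with the 1-dimensionality of the general fibre, perhaps by descending the natural morphism from $\widetilde{\mathcal C}$ (constructed via $\widetilde q$ and the parameter-space quotient) through the finite cover $\widetilde p$.
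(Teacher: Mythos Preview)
Your overall strategy matches the paper's, and steps (ii)--(v) via slicing with Lemma \ref{lem:surface} and the canonical bundle formula are essentially what the paper does. The genuine gap is in your extension argument for $\varphi$.

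You propose to compose $\widetilde q$ with ``the $\mathcal C$-quotient on the parameter space'' and then descend through the finite map $\widetilde p$. But there is no $\mathcal C$-quotient on $\widetilde T$; what you have is only an induced \emph{rational} map $\widetilde T\dashrightarrow W$ (defined by $t\mapsto \varphi(C_t)$ for general $t$), hence only a rational map $\widetilde{\mathcal C}\dashrightarrow W$. Descending a rational map through a finite cover does not produce a morphism on $X$, so the argument as written is circular. Your equidimensionality argument inherits the same problem: if some fibre over $w_0\in W$ had dimension $\geq 2$, then the preimage of a curve through $w_0$ is not a surface, so Lemma \ref{lem:surface} does not apply.

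The missing observation, which resolves both issues at once, is that $\widetilde p$ is not merely finite but \emph{birational}. Indeed, you have already shown that a general fibre of the almost holomorphic map $\varphi$ is $1$-dimensional; since such a fibre is $\mathcal C$-chain connected, it must be a single curve $C_t$. Hence through a general point of $X$ there passes a unique member of the family, which says exactly that $\widetilde p\colon\widetilde{\mathcal C}\to X$ is generically one-to-one. Now finiteness (your input from Proposition \ref{new1}) together with birationality and normality of $X$ force $\widetilde p$ to be an isomorphism by Zariski's Main Theorem. One may then take $W=\widetilde T$ and $\varphi=\widetilde q\circ\widetilde p^{-1}$, which is a genuine morphism; equidimensionality is now automatic because $\widetilde q$ is equidimensional by the definition of a covering family. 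This is precisely the route taken in the paper (where the finiteness of $p$ is reproved directly rather than quoted from Proposition \ref{new1}, but your appeal to that proposition is a legitimate shortcut). With this fix in place, the rest of your outline goes through as you describe.
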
 

\begin{proof}  
\emph{Step 1.}
By Theorem \ref{thm:ellipticrational}, the manifold $X$ is not elliptically chain connected with respect to $\mathcal C$. Let $\varphi\colon X \dasharrow W$ be the $\mathcal C$-quotient of $X$, and recall that $\varphi$ is almost holomorphic. Assume the dimension of a general fibre $F$ of $\varphi$ is at least $2$. Since $F$ is elliptically chain connected, Theorem \ref{thm:ellipticrational} implies that $F$ possesses a rational curve, a contradiction. Therefore, a general fibre of $\varphi$ is a curve.

Let $q\colon \widetilde \sC \to T$ be the normalized graph of the family with projection $p\colon \widetilde \sC \to X$, and for each $t\in T$ denote $C_t:=p(q^{-1}(t))$. Since there is a unique elliptic curve through a general point $x \in X$, the morphism $p$ is birational, and we may assume that $W = T$ and $\varphi=q\circ p^{-1}$. To show that $\varphi$ is a morphism, it suffices to show that $p$ is an isomorphism. By Zariski's main theorem, it suffices to show that $p$ is finite.

\medskip

\emph{Step 2.}
Assume to the contrary that $\dim p^{-1}(x) > 0$ for some $x \in X$, and set
$$T(x) = \{t \in T \mid x \in C_t \}.$$
Then $\dim T(x) > 0$; choose an irreducible curve $C \subseteq T(x)$. Let $\sigma\colon \widehat T \to T$ be a desingularisation 
and $\widehat \sC$ be the normalisation of the main component of fibre product $\widetilde \sC \times_T \widehat T$, so that we have a diagram
$$
\xymatrix{
\widehat{\sC} \ar[r]^{\tau} \ar[d]_{\widehat q} & \widetilde \sC \ar[d]^{q} \\
\widehat T \ar[r]_{\sigma} & T.
}
$$
Possibly by blowing up further, we may assume that there exists a smooth curve $\widehat C \subseteq \widehat T$ such that $\sigma (\widehat C) = C$; choose an irreducible component $\widehat S$ of $\widehat q^{-1}(\widehat C)$ with normalisation $\nu\colon\widehat{S}^\nu \to \widehat S$. Let 
$$f\colon \widehat S^\nu \to \widehat C$$ 
be the induced map and notice that the fibres of $f$ do not contain rational curves, since otherwise $q$ would contain a rational curve in a fibre. 

We claim that a general fibre of $f$ is an elliptic curve. Indeed, pick a general point $y \in \widehat C$, and let $B$ be a general smooth curve containing $y$ such that the surface $S_B := \widehat q^{-1}(B)$ is normal and such that a general fibre of $S_B$ is an elliptic curve. Then Lemma \ref{lem:surface} implies that $S_B$ is smooth and the projection $S_B \to B$ is almost smooth. Hence, a general fibre of $f$ is likewise elliptic, which proves the claim.

Lemma \ref{lem:surface} now implies that $\widehat S^\nu$ is smooth, that $f$ is almost smooth, and that $\widehat S^\nu$ does not contain exceptional curves. However, the birational map $\widehat S^\nu \to X$ contracts $\nu^{-1}(\widehat S\cap (p\circ\tau)^{-1}(x))$, a contradiction. Thus $p$ is finite, hence an isomorphism. Consequently, $\varphi\colon X \to W$ is an equidimensional morphism.

\medskip

\emph{Step 3.}
The arguments above also show that $\varphi$ is almost smooth over the smooth locus of $W$. To see that each fibre of $\varphi$ is an irreducible curve whose normalisation is an elliptic curve, let $w \in W$ be a singular point. Let $\Gamma \subseteq W$ be a general irreducible curve through $w$. Then the preimage $S_\Gamma = \varphi^{-1}(\Gamma)$ is an irreducible surface in $X$. Let $h\colon S_\Gamma^\nu \to \Gamma^\nu$ be the induced map between normalisations. Then by Lemma \ref{lem:surface}, $h$ is an almost smooth elliptic fibration. Hence $\varphi^{-1}(w)$ is an irreducible curve whose normalisation is elliptic. 

\medskip

\emph{Step 4.}
We claim first that $K_X$ is numerically trivial on every fibre of $\varphi$. Indeed, this is clear for a general fibre, which is a smooth elliptic curve. If $F$ is a special fibre, then $F_{\textrm{red}}$ is irreducible. So if $K_X|_{F_{\textrm{red}}}$ were not numerically trivial, then $K_X|_{F_{\textrm{red}}}$ would be ample or anti-ample. Hence the same would be true for the nearby fibres, as ampleness is an open property, which is a contradiction which proves the claim. 

Therefore, the manifold $X$ has a relative good model over $W$ by \cite[Theorem 2.12]{HX13}, and in particular, $K_X\sim_\Q\varphi^*A$ for some $\Q$-Cartier divisor $A$ on $W$. Consequently, there exists an effective divisor $\Delta_W$ on $W$ such that $(W,\Delta_W)$ is klt by \cite[Theorem 0.2]{Amb05a}.
\end{proof} 

Using \cite{DFM16}, we conclude again that a Calabi-Yau or hyperk\"ahler manifold which admits a covering family of elliptic curves, contains a rational curve. 

\section{Open problems} 
In this section we address some conjectures and open problems. The first question already came up in Theorem \ref{thm:ecc1a} and Remark \ref{rem1}: 

\begin{prb} 
Is elliptic chain connectedness a birational property of terminal or klt varieties? 
\end{prb} 

Next, we recall the following:

\begin{dfn} 
A projective manifold $X$ is \emph{special} if for any $p>0$ and for any locally free subsheaf $\mathcal L \subseteq \Omega^p_X$ of rank $1$,  we have $\kappa (X,\mathcal L) \leq p-1$. 
\end{dfn}

This is not Campana's original definition, but an equivalent statement, see \cite[Theorem 2.27]{Ca04a}. Campana's theory yields the following  statement.

\begin{pro} 
Any torically chain connected projective manifold is special. 
\end{pro}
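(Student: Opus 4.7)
The plan is to argue by contradiction, combining Campana's theorem on Bogomolov sheaves with Kawamata-type rigidity for abelian varieties mapping to pairs of general type. Suppose $X$ is not special: then there exist $p \geq 1$ and a locally free rank-one subsheaf $\mathcal{L} \subseteq \Omega^p_X$ with $\kappa(X,\mathcal{L}) \geq p$, necessarily equal to $p$ by Bogomolov--Sommese. So $\mathcal{L}$ is a Bogomolov sheaf in the sense of Campana, and the central theorem of \cite{Ca04a} asserts that it arises from an almost holomorphic fibration $\varphi\colon X \dashrightarrow Y$ with $\dim Y = p \geq 1$, whose base carries a natural Campana orbifold structure $\Delta_\varphi$ making $(Y,\Delta_\varphi)$ of orbifold general type.

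Next, let $\mathcal{C}=(C_t)_{t\in T}$ be a chain-connecting family whose general member is birational to an abelian variety $A_t$. For general $t$, after resolving $A_t \dashrightarrow C_t \hookrightarrow X$, composition with $\varphi$ yields a rational map $A_t \dashrightarrow (Y,\Delta_\varphi)$ compatible with the orbifold structure. By the orbifold version of Kawamata's theorem on morphisms from abelian varieties into pairs of general type (a generalization, due to Campana, of Ueno's classical result), this map is necessarily constant. Hence $\varphi$ contracts $C_t$ to a point for general $t$, and therefore for every $t$, because being contracted is detected by the vanishing of the intersection number with the pullback of an ample class from $Y$, and these intersection numbers are constant in the family by \cite[Proposition I.3.12]{Kol96}.

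Using the chain-connecting property, any two very general points of $X$ can be joined by a chain $C_{t_1},\dots,C_{t_m}$ of members of $\mathcal{C}$. Each $C_{t_i}$ is contracted by $\varphi$ to a point of $Y$, and since consecutive members share a point of $X$, all these image points coincide. Thus $\varphi$ must be constant, contradicting $\dim Y \geq 1$, and completing the proof.

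The main technical obstacle is handling the orbifold structure $\Delta_\varphi$ on the base of $\varphi$: Campana's Bogomolov theorem only yields that $(Y,\Delta_\varphi)$ is of \emph{orbifold} general type, not that $Y$ itself is, so the classical Kawamata--Ueno rigidity statement for smooth targets is not enough. One must invoke the orbifold version built into Campana's framework, typically proved by passing to a suitable ramified cover of $Y$ which trivializes $\Delta_\varphi$ and then applying the classical theorem on that cover; verifying that the abelian variety $A_t$ lifts (after étale cover) to this ramified model is the delicate point.
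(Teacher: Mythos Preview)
Your argument is correct and follows essentially the same route as the paper, which simply cites \cite[Theorem~3.3(2) and Theorem~5.1]{Ca04a}. What you have written is an unpacking of those two citations: Theorem~5.1 is the statement that abelian varieties are special (equivalently, your ``orbifold Kawamata--Ueno'' step), and Theorem~3.3(2) is the assertion that special subvarieties through a general point are contracted by any fibration of general type, which combined with chain connectedness forces the fibration to be trivial. Your concern about lifting $A_t$ through a ramified cover trivialising $\Delta_\varphi$ is exactly the work that Campana has already packaged into these results, so once you invoke his theorems you need not redo that step.
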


\begin{proof} 
This follows from \cite[Theorem 3.3(2) and Theorem 5.1]{Ca04a}.
\end{proof} 

\begin{prb} 
Therefore, given a torically chain connected projective manifold, we have $\kappa (X,  \mathcal L) \leq p-1$ for any  locally free subsheaf $\mathcal L \subseteq \Omega^p_X $ of rank $1$. Is it, actually, true that $\kappa (X, \mathcal L) \leq 0$? By Campana's theory, this is -- modulo the Minimal Model Program --  equivalent to saying that $\kappa (X,K_X) \leq 0$. 
\end{prb} 

Following \cite[Definition 3.26]{Voi03}, one may consider whether a variety is \emph{rationally swept out} by a family of varieties. In particular, to say that $X$ is rationally swept out by abelian varieties is equivalent to the existence of a covering 
family of subvarieties such that the general member has a finite cover which is birational to an abelian variety.  
Then we have the following version of Lang's conjecture  \cite[Conjecture 3.31]{Voi03}.

\begin{con} 
Let $X$ be a projective manifold with $0 \leq \kappa (X,K_X) \leq \dim X - 1$. Then $X$ is rationally swept out by abelian varieties.\end{con}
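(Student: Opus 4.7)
The plan is to reduce to the case $\kappa(X,K_X)=0$ via the Iitaka fibration, and then to combine a Beauville--Bogomolov type decomposition with a separate analysis of each factor.

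First, if $0<\kappa(X,K_X)<\dim X$, pass to a good minimal model $X'$ of $X$ (assuming, as one must for a conjecture in this range, the relevant parts of the Minimal Model Program) and consider the Iitaka fibration $f\colon X'\to Y$. Each very general fibre $F$ is a smooth projective variety with $\kappa(F,K_F)=0$ and $\dim F=\dim X-\kappa(X,K_X)\geq 1$. If the conjecture is known for such $F$, then a covering family of subvarieties of $F$ birational to abelian varieties, varied through the family of fibres of $f$, produces a covering family of subvarieties of $X'$ with the required property, and the birational equivalence with $X$ transports it back. This reduces the problem to the case $\kappa(X,K_X)=0$.

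For $\kappa(X,K_X)=0$, one invokes the (now largely established in the $K$-trivial klt setting) Beauville--Bogomolov decomposition theorem, producing a finite \'etale cover
$$X' \simeq T\times\prod_i Y_i\times\prod_j Z_j,$$
where $T$ is an abelian variety, the $Y_i$ are Calabi--Yau, and the $Z_j$ are irreducible symplectic. Since rational sweeping by abelian varieties is compatible with taking products and with finite \'etale covers, it suffices to handle each factor. The abelian factor $T$ is trivially swept out by itself, so the problem localises to Calabi--Yau and hyperk\"ahler factors.

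The main obstacle is precisely this last step: exhibiting, on every Calabi--Yau or hyperk\"ahler manifold $Z$, a covering family of subvarieties whose general member is generically finitely covered by an abelian variety. For K3 surfaces this is classical by Bogomolov--Mumford, and for certain hyperk\"ahler fourfolds one expects families of Lagrangian tori, but in general even the existence of a single elliptic or abelian subvariety is open, and Theorem \ref{thm:CYHK} only gives the very weak converse direction. A plausible attack would combine hyperbolicity/Kobayashi--Ochiai-type input on $Z$ with Hodge-theoretic constructions on the deformation space of subschemes, using the reflexive forms on $Z$ to force the subvarieties produced to have vanishing canonical class and then appealing again to Beauville--Bogomolov on these subvarieties to extract the abelian piece; for hyperk\"ahlers one would further hope to exploit the hyperk\"ahler metric and the SYZ-type prediction that Lagrangian fibrations exist birationally. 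In any case, this is the deepest part of the conjecture and is expected to require genuinely new ideas.
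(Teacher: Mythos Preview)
The statement you are attempting to prove is labelled as a \emph{Conjecture} in the paper (a version of Lang's conjecture, stated in the Open Problems section), and the paper offers no proof whatsoever. There is therefore nothing to compare your proposal against.

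Your proposal is also not a proof, and you say as much: after the reduction via the Iitaka fibration and the Beauville--Bogomolov decomposition, the entire content of the conjecture has been pushed into the assertion that every Calabi--Yau or hyperk\"ahler manifold is rationally swept out by abelian varieties, and you explicitly acknowledge that this ``is expected to require genuinely new ideas.'' That is accurate; indeed the paper's own Remark immediately following the conjecture notes that for a general degree $n+2$ hypersurface in $\PS^{n+1}$ the conjecture already conflicts with a conjecture of Clemens. Even the reduction step is not unconditional: it assumes the existence of good minimal models, which is open in general. So what you have written is a reasonable heuristic outline of why the conjecture is hard and where the difficulty concentrates, but it should not be presented as a proof or a proof proposal.
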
 

One might ask for a higher-dimensional analogue of Theorem \ref{thm:ratcurves}:

\begin{prb}  
Let $X$ be a projective manifold with $0 \leq \kappa (X,K_X) \leq \dim X - 1$. Assume that $X$ is rationally swept out by abelian varieties, and suppose that $X$ has no rational curves. Does there exists a holomorphic equidimensional fibre space $\varphi\colon X \to W $ to a normal klt variety $W$ such that:
\begin{enumerate}
\item[(a)] all fibres are irreducible (but possibly non-reduced),
\item[(b)] the normalisation of any fibre is an abelian variety up to finite \'etale cover?
\end{enumerate} 
\end{prb} 

\begin{prb}  
Are there Calabi-Yau threefolds which are rationally swept out by tori  or even torically connected, but not by elliptic curves? As possible examples, one might consider Calabi-Yau threefolds with $\rho(X) = 2$ admitting a fibre space structure over $\PS^1$ whose general fibre is a simple abelian surface. A weaker question would be: Is there a Calabi-Yau threefold which is not elliptically chain connected (but possibly covered by elliptic curves)? 
\end{prb} 

\begin{rem}
If $X $ is a general hypersurface of degree $n+2$ in $\PS^{n+1}$, then $X$ is not rationally swept out by abelian varieties of dimension at least $2$, see \cite[Theorem 3.30]{Voi03}. Therefore, Lang's conjecture predicts that $X$ is covered by elliptic curves. However, as Voisin shows, this contradicts a conjecture of Clemens.
\end{rem}

\bibliographystyle{amsalpha}

\bibliography{biblio}

\end{document}